\def \z{\mathbb{Z}}
\def \r{\mathbb{R}}
\def \n{\mathbb{N}}
\def \C{\mathcal{C}}
\def \I{\mathcal{I}}
\def \J{\mathcal{J}}
\def \R{\mathbb{R}_{\geq 0}^n}
\def \M{\mathcal{M}}
\def \S{\mathcal{S}}
\def \m{\mathfrak{m}}
\def \a{\mathfrak{a}}
\def \k{{\bf k}}
\def \b{\mathfrak{b}}
\def \.{\cdot}
\def \vol{\textup{vol}}
\def \covol{\textup{covol}}
\def \In{{\bf in}}
\def \conv{\textup{conv}}
\theoremstyle{plain}
\newtheorem{Th}{Theorem}[section]
\newtheorem{Prop}[Th]{Proposition}
\newtheorem{Cor}[Th]{Corollary}
\theoremstyle{definition}
\newtheorem{Ex}[Th]{Example}
\newtheorem{Def}[Th]{Definition}
\newtheorem{Rem}[Th]{Remark}
\begin{document}
\title{Convex bodies and multiplicities of ideals}

\author{Kiumars Kaveh}
\address{Department of Mathematics, School of Arts and Sciences, University of Pittsburgh, 
301 Thackeray Hall, Pittsburgh, PA  15260, U.S.A.}
\email{kaveh@pitt.edu} 

\thanks{The first author is partially supported by a
Simons Foundation Collaboration Grants for Mathematicians (Grant ID: 210099) and a National Science Foundation
(Grant ID: 1200581).}

\thanks{The second author is partially supported by the Canadian Grant N 156833-12.}

\author{A. G. Khovanskii}
\address{Department of Mathematics, University of Toronto, Toronto, Canada; 
Moscow Independent University; Institute for Systems Analysis, Russian Academy of Sciences}
\email{askold@math.utoronto.ca}

\dedicatory{Dedicated to Viktor Matveyevich Buchstaber for the occasion of his 70th birthday}

\keywords{multiplicity, local ring, Hilbert-Samuel function, convex body} 
\subjclass[2010]{Primary: 13H15, 11H06; Secondary: 13P10}

\date{\today}

\maketitle

\begin{abstract}
We associate {convex regions} in $\r^n$ to $\m$-primary graded sequences of subspaces, in particular $\m$-primary graded sequences of ideals, in a large class of local algebras (including analytically irreducible local domains).
These {convex regions} encode information about Samuel multiplicities. This is in the spirit of the theory of Gr\"obner bases and Newton polyhedra on one hand, and the theory of Newton-Okounkov bodies for linear systems on the other hand. We use this to give a new proof, as well as a generalization of a Brunn-Minkowski inequality for multiplicities due to Teissier and Rees-Sharp. 
\end{abstract}

\section*{Introduction}
The purpose of this note is to employ, in the local case, techniques from the theory of semigroups of integral points and Newton-Okounkov bodies (for the global case) and to obtain new results as well as new proofs of some previously known results about multiplicities of ideals in local rings. 

Let $R = \mathcal{O}_{X, p}$ be the local ring of a point $p$ on an $n$-dimensional irreducible algebraic variety $X$ over an algebraically closed field $\k$. 
Let $\m$ denote the maximal ideal of $R$ and let $\a$ be an $\m$-primary ideal, i.e. $\a$ is an ideal containing a power of the maximal ideal $\m$. Geometrically speaking, $\a$ is $\m$-primary if its zero set (around $p$) is the single point $p$ itself. Let $f_1, \ldots, f_n$ be $n$ generic elements in $\a$. The {\it multiplicity} $e(\a)$ of the ideal $\a$ is the intersection multiplicity, at the origin, of the hypersurfaces $H_i = \{ x \mid f(x) = 0\}$, $i=1, \ldots, n$ (it can be shown that this number is independent of the choice of the $f_i$). According to Hilbert-Samuel's theorem, the multiplicity $e(\a)$ is equal to: 
$$n!~\lim_{k \to \infty} \frac{\dim_\k(R/\a^k)}{k^n}.$$
(This result is analogous to Hilbert's theorem on the Hilbert function and degree of a projective variety.)
More generally, let $R$ be an $n$-dimensional Noetherian local domain over $\k$ (where $\k$ is isomorphic to 
the residue field $R/\m$ and $\m$ is the maximal ideal). 
Let $\a$ be an $\m$-primary ideal of $R$. Since $\a$ contains a power of the maximal ideal $\m$, $R/\a$ is finite dimensional regarded as a vector space over $\k$.  
The {\it Hilbert-Samuel function} of the $\m$-primary ideal $\a$ is defined by: $$H_\a(k) = 
\dim_\k(R/\a^k).$$ For large values of $k$, $H_\a(k)$ coincides with a polynomial of degree $n$ called 
the {\it Hilbert-Samuel polynomial} of $\a$. The {\it Samuel multiplicity}, $e(\a)$ of $\a$ is defined to be the leading coefficient of $H_\a(k)$ multiplied by $n!$. 

It is well-known that the Samuel multiplicity satisfies a Brunn-Minkowski inequality \cite{Teissier1, RS}. That is,
for any two $\m$-primary ideals $\a$, $\b \in R$ we have:
\begin{equation} \label{equ-Brunn-Mink-intro} 
e(\a)^{1/n} + e(\b)^{1/n} \geq e(\a\b)^{1/n}.
\end{equation}

More generally we define multiplicity for {\it $\m$-primary graded sequences of subspaces}. That is, a sequence
$\a_1, \a_2, \ldots$ of $\k$-subspaces in $R$ such that for all $k, m$ we have $\a_k \a_m \subset \a_{k+m}$, 
and { $\a_1$ contains a power of the maximal ideal $\m$} (Definition \ref{def-graded-seq-subspace}). 
We recall that if $\a, \b$ are two $\k$-subspaces of $R$, 
$\a\b$ denotes the $\k$-span of all the $xy$ where $x \in \a$ and $y \in \b$. In particular, a graded sequence $\a_\bullet$ where each $\a_k$ is an 
$\m$-primary ideal, is an $\m$-primary graded sequence of subspaces. We call such $\a_\bullet$ an 
{\it $\m$-primary graded sequence of ideals}.

For an $\m$-primary graded sequence of subspaces we define multiplicity $e(\a_\bullet)$ to be: 
\begin{equation} \label{equ-multi-graded-seq-ideals-intro}
e(\a_\bullet) = n!~\lim_{k \to \infty} \frac{\dim_\k(R/\a_k)}{k^n}.
\end{equation} 
(It is not a priori clear that the limit exists.)

We will use convex geometric arguments to prove the existence of the limit in \eqref{equ-multi-graded-seq-ideals-intro}  
and a generalization of \eqref{equ-Brunn-Mink-intro} to $\m$-primary graded sequences of subspaces, 
for a large class of local domains $R$. 



{Let us briefly discuss the convex geometry part of the story}. Let $\C$ be a closed strongly convex cone with apex at the origin 
(i.e. $\C$ is a convex cone and does not contain any line). 
We call a closed convex set $\Gamma \subset \C$, a {\it $\C$-convex 
region} if for any $x \in \Gamma$ we have $x + \C \subset \Gamma$. We say that $\Gamma$ is {\it cobounded} if $\C \setminus \Gamma$ is bounded. It is easy to verify that
the set of cobounded $\C$-convex regions is closed under addition (Minkowski sum of convex sets) and multiplication with a positive real number. For a cobounded $\C$-convex region $\Gamma$ we call the volume of the bounded region 
$\C \setminus \Gamma$ the {\it covolume of $\Gamma$} and denote it by $\covol(\Gamma)$. Also we refer to $\C \setminus \Gamma$ 
as a {\it $\C$-coconvex body}. (Instead of working with convex regions one can alternatively work with coconvex bodies.) 
In \cite{Askold-Vladlen, Askold-Vladlen2}, similar to convex bodies and their volumes (and mixed volumes), the authors 
develop a theory of convex regions and their covolumes (and mixed covolumes). Moreover they prove an analogue of 
the Alexandrov-Fenchel inequality for mixed covolumes (see Theorem \ref{thm-alexandrov-fenchel-covolume}). 
The usual Alexandrov-Fenchel inequality is an important inequality about mixed volumes of convex bodies in $\r^n$ and generalizes the classical isoperimetric inequality and the Brunn-Minkowski inequality. In a similar way, the result in \cite{Askold-Vladlen} 
implies a Brunn-Minkowski inequality for covolumes, that is, for any two cobounded $\C$-convex regions $\Gamma_1$, $\Gamma_2$  where $\C$ is an $n$-dimensional cone, we have:
\begin{equation} \label{equ-intro-BM-inequ-covol}
\covol(\Gamma_1)^{1/n} + \covol(\Gamma_2)^{1/n} \geq 
\covol(\Gamma_1 + \Gamma_2)^{1/n}.
\end{equation}
We associate convex regions to $\m$-primary graded sequences of subspaces (in particular $\m$-primary ideals) 
and use the inequality \eqref{equ-intro-BM-inequ-covol} to prove the Brunn-Minkowski inequality for multiplicities.
To associate a convex region to a graded sequence of subspaces we need a valuation on the ring $R$. 
We will assume that there is a valuation $v$ on $R$ with values in $\z^n$ (with respect to a total order on $\z^n$ respecting addition) such that 
the residue field of $v$ is $\k$, and moreover the following conditions (i)-(ii) hold \footnote{In \cite{KKh-Annals} a valuation $v$ with values 
in $\z^n$ and residue field $\k$ is called a {\it valuation with one-dimensional leaves (see Definition \ref{def-valuation}).}}. 
We call such $v$ a {\it good valuation} on $R$ (Definition \ref{def-good-valuation}):

{
\noindent (i) Let $\S = v(R \setminus \{0\}) \cup \{0\}$ be the value semigroup of $(R, v)$. Let $\C = C(\S)$ be the closure of the convex hull of $\S$. It is a closed convex cone with apex at the origin. We assume that $\C$ is a strongly convex cone.
 
Let $\ell: \r^n \to \r$ be a linear function. For any $a \in \r$ let $\ell_{\geq a}$ denote the half-space $\{ x \mid \ell(x) \geq a\}$. 
Since the cone $\C = C(\S)$ associated to the semigroup $\S$ is assumed to be strongly convex we can find a linear function $\ell$ 
such that the cone $\C$ lies in $\ell_{\geq 0}$ and it intersects the hyperplane $\ell^{-1}(0)$ only at the origin. 

\noindent (ii) We assume there exists $r_0 > 0$ and a linear function $\ell$ as above such that for any $f \in R$, 
if $\ell(v(f)) \geq kr_0$ for some $k>0$ then $f \in \m^k$.
 
Let $\M_k = v(\m^k \setminus \{0\})$ denote the image of $\m^k$ under the valuation $v$. 
The condition (ii) in particular implies that for any $k>0$ we have $\M_k \cap \ell_{\geq kr_0} = \S \cap \ell_{\geq kr_0}$.
}

As an example, let $R=\k[x_1, \ldots, x_n]_{(0)}$ be the algebra of polynomials localized at the maximal ideal $(x_1, \ldots, x_n)$. Then the map $v$ which associates to a polynomial its lowest exponent (with respect to some term order) defines a {good valuation} on $R$ and the value semigroup $\S$ coincides with the semigroup $\z_{\geq 0}^n$, that is, the semigroup of all the integral points in the 
positive orthant $\C = \r_{\geq 0}^n$. In the same fashion any regular local ring has a good valuation, as well as the local ring of a toroidal singularity (Example \ref{ex-good-val-toric-local-ring} and Theorem \ref{th-good-val-reg-local-ring}). More generally, in Section \ref{sec-valuation-ideal} we see that an analytically irreducible local domain $R$ has a good valuation (Theorem \ref{th-good-val-S/R} and Theorem \ref{th-good-val-analytically-irr-sing}; see also \cite[Theorem 4.2 and Lemma 4.3]{Cutkosky1}). A local ring $R$ is said to be analytically irreducible if its completion is an integral domain. Regular local rings and local rings of toroidal singularities are analytically irreducible. (We should point out that in the first version of the paper we had addressed only the case where 
$R$ is a regular local ring or the local ring of a toroidal singularity.)

{Given a good $\z^n$-valued valuation $v$ on the domain $R$, we associate the (strongly) convex cone $\C = \C(R) \subset \r^n$ to the domain $R$ which is the closure of convex hull of the value semigroup $\S$. 
Then to each $\m$-primary graded sequence of subspaces $\a_\bullet$ in $R$
we associate a convex region $\Gamma(\a_\bullet) \subset \C$, such that 
the set $\C \setminus \Gamma(\a_\bullet)$ is bounded (Definition \ref{def-Gamma-I}).}
The main result of the manuscript (Theorem \ref{th-multi-ideal-covol}) is that 
the limit in (\ref{equ-multi-graded-seq-ideals-intro}) exists and: 
\begin{equation} \label{equ-intro-main}
e(\a_\bullet) = n!~\covol(\Gamma(\a_\bullet)).
\end{equation}
{The equality \eqref{equ-intro-main} and the Brunn-Minkowski inequality for covolumes (see \eqref{equ-intro-BM-inequ-covol} or 
Corollary \ref{cor-Brunn-Mink-covol}) 
are the main ingredients in proving a generalization of the inequality \eqref{equ-Brunn-Mink-intro} to $\m$-primary graded sequences of subspaces (Corollary \ref{cor-Brunn-Mink-multi}).}

{ We would like to point out that the construction of $\Gamma(\a_\bullet)$ is an analogue of the construction of the Newton-Okounkov body of a linear system on an algebraic variety (see \cite{Okounkov-log-concave}, \cite{Okounkov-Brunn-Mink}, \cite{KKh-Annals}, \cite{LM}). In fact, the approach and results in the present paper are analogous to the approach and results in 
\cite{KKh-Annals} regarding the asymptotic behavior of Hilbert functions of a general class of graded algebras.
In the present manuscript we also deal with certain graded algebras (i.e. $\m$-primary graded sequences of subspaces) 
but instead of dimension of graded pieces we are interested in the codimension (i.e. dimension of $R / \a_k$), that is why in our main theorem (Theorem \ref{th-multi-ideal-covol}) the covolume of a convex region appears as opposed to the volume of a convex body (\cite[Theorem 2.31]{KKh-Annals}). Also our Theorem \ref{th-multi-ideal-covol} generalizes 
\cite[Corollary 3.2]{KKh-Annals} which gives a formula for the degree of a projective variety $X$ in terms of the volume of its corresponding Newton-Okounkov body, because the Hilbert function of a projective variety $X$ can be regarded as the difference derivative of the 
Hilbert-Samuel function of the affine cone over $X$ at the origin and hence has the same leading coefficient.}

On the other hand, the construction of $\Gamma(\a)$ generalizes the notion of the Newton diagram of a power series
(see \cite{Kushnirenko} and \cite[Section 12.7]{AVG}). To a monomial ideal in a polynomial ring (or a power series ring), 
i.e. an ideal generated by monomials, one can associate its (unbounded) {\it Newton polyhedron}. It is the convex hull of the exponents of the monomials appearing in the ideal. The {\it Newton diagram} of a monomial ideal is the union of the bounded faces of the Newton polyhedron. One can see that for a monomial ideal $\a$, the convex region $\Gamma(\a)$ coincides with its Newton polyhedron (Theorem \ref{th-covol-monomial}).
The main theorem in this manuscript (Theorem \ref{th-multi-ideal-covol}) for the case of monomial ideals recovers the local case of the well-known theorem of Bernstein-Kushnirenko, about computing the multiplicity at the origin of a system $f_1(x) = \cdots = f_n(x) = 0$ where the $f_i$ are generic functions from $\m$-primary monomial ideals (see Section \ref{sec-monomial-ideal} and  \cite[Section 12.7]{AVG}).


{ Another immediate corollary of \eqref{equ-intro-main} is the following: let $\a$ be an $\m$-primary ideal in 
$R = \k[x_1, \ldots, x_n]_{(0)}$. Fix a term order on $\z^n$ and for each $k > 0$ let $\In(\a^k)$ denote the initial ideal of the ideal $\a^k$
(generated by the lowest terms of elements of $\a^k$). Then the sequence of numbers $$\frac{e(\In(\a^k))}{k^n}$$ is decreasing and converges to 
$e(\a)$ as $k \to \infty$ (Corollary \ref{cor-Lech}). }


The Brunn-Minkowski inequality proved in this paper { is closely related to} the more general Alexandrov-Fenchel inequality for mixed multiplicities. Take $\m$-primary ideals 
$\a_1, \ldots, \a_n$ in a local ring $R = \mathcal{O}_{X, p}$ of a point $p$ on an $n$-dimensional 
algebraic variety $X$. The {\it mixed multiplicity} $e(\a_1, \ldots, \a_n)$ is equal to the intersection multiplicity, at the origin, of the hypersurfaces 
$H_i = \{ x \mid f_i(x) = 0\}$, $i = 1, \ldots, n$, where each $f_i$ is a generic function from $\a_i$. 
Alternatively one can define the mixed multiplicity as the polarization of the Hilbert-Samuel multiplicity 
$e(\a)$, i.e. it is the unique function $e(\a_1, \ldots, \a_n)$ which is invariant under permuting the arguments, is multi-additive with respect to product of ideals, and for any $\m$-primary ideal 
$\a$ the mixed multiplicity $e(\a, \ldots, \a)$ coincides with $e(\a)$. In fact, in the above the $\a_i$ need not be ideals and it suffices for them to be $\m$-primary subspaces.   

The Alexandrov-Fenchel inequality is the following inequality among the mixed multiplicities of the $\a_i$:
\begin{equation} \label{equ-Alex-Fenchel-mixed-multi}
e(\a_1, \a_1, \a_3, \ldots, \a_n) e(\a_2, \a_2, \a_3, \ldots, \a_n) \geq e(\a_1, \a_2, \a_3, \ldots, \a_n)^2 
\end{equation}

{When $n = \dim R = 2$ it is easy to see that the Brunn-Minkowski inequality \eqref{equ-Brunn-Mink-intro} and the 
Alexandrov-Fenchel inequality \eqref{equ-Alex-Fenchel-mixed-multi} are equivalent.
By a reduction of dimension theorem for mixed multiplicities one can get a proof of the Alexandrov-Fenchel inequality \eqref{equ-Alex-Fenchel-mixed-multi} from the Brunn-Minkowski inequality \eqref{equ-Brunn-Mink-intro} for $\dim(R)=2$. 
The Brunn-Minkowski inequality \eqref{equ-Brunn-Mink-intro} was originally proved in \cite{Teissier1, RS}.}

In a recent paper \cite{KKh-mixed-multi} we give a simple proof of the Alexandrov-Fenchel inequality for mixed multiplicities of ideals using arguments similar to but different from those of this paper. This then implies an Alexandrov-Fenchel inequality for covolumes of convex regions 
(in a similar way that in \cite{KKh-Annals} and in \cite{Askold-BZ} the authors obtain an alternative proof of the usual Alexandrov-Fenchel inequality for volumes of convex bodies from similar inequalities for intersection numbers of divisors on algebraic varieties). 

We would like to point out that the Alexandrov-Fenchel inequality in \linebreak \cite{Askold-Vladlen} 
for covolumes of coconvex bodies is related to an analogue of 
this inequality for convex bodies in higher dimensional hyperbolic space (or higher dimensional Minkowski space-time).
{ From this point of view, the Alexandrov-Fenchel inequality has been proved for certain coconvex bodies in \cite{Fillastre}.}

After the first version of this note was completed we learned about the recent papers \cite{Cutkosky1, Cutkosky2} and 
\cite{Fulger} which establish the existence of limit (\ref{equ-multi-graded-seq-ideals-intro}) in more general settings.
We would also like to mention the paper of Teissier \cite{Teissier2} which discusses Newton polyhedron of a power series, and notes the relationship/analogy between notions from local commutative algebra and convex geometry. { Also we were notified that, for ideals in a polynomial ring, ideas similar to construction of $\Gamma(\a_\bullet)$ (see Definition \ref{def-Gamma-I}) 
appears in \cite{Mustata} were the highest term of polynomials is used instead of a valuation. Moreover, in \cite[Corollary 1.9]{Mustata} the Brunn-Minkowski-inequality for multiplicities of graded sequences of $\m$-primary ideals is proved for regular local rings using Teissier's Brunn-Minkowski \eqref{equ-Brunn-Mink-intro}.}

{Finally as the final version of this manuscript was being prepared for publication, 
the preprint of D. Cutksoky \cite{Cutkosky3} appeared in arXiv.org in which the author uses similar methods to 
prove Brunn-Minkowski inequality for graded sequences of $\m$-primary ideals in local domains.}

And few words about the organization of the paper: Section \ref{sec-mixed-vol} recalls basic background material about 
volumes/mixed volumes of convex bodies. Section \ref{sec-convex-diag} is about convex regions and their covolumes/mixed covolumes, which we can think of as a local version of the theory of mixed volumes of convex bodies. In Sections \ref{sec-semigp-int} and \ref{sec-semigp-ideal} we associate a convex region to a primary sequence of subsets in a semigroup and prove the main combinatorial result required later (Definition \ref{def-Gamma-I-semigroup} and Theorem \ref{th-vol-Gamma-semigroup}). { In Section \ref{sec-multi-ideal} we recall some basic definitions and facts from commutative algebra about multiplicities of $\m$-primary 
ideals (and subspaces) 
in local rings. The next section (Section \ref{sec-monomial-ideal}) discusses the case of monomial ideals and the Bernstein-Kushnirenko theorem.
Finally in Section \ref{sec-valuation-ideal}, using a valuation on the ring $R$, we associate a convex region $\Gamma(\a_\bullet)$
to an $\m$-primary graded sequence of subspaces $\a_\bullet$ and prove the main results of this note (Theorem \ref{th-multi-ideal-covol} and Corollary \ref{cor-Brunn-Mink-multi}).}\\

{\bf Acknowledgement.} The first author would like to thank Dale Cutkosky, Vladlen Timorin and Javid Validashti for helpful discussions. We are also thankful to Bernard Teissier, Dale Cutkosky, Francois Fillastre and Mircea Musta\c{t}\u{a} for informing us about their interesting papers \cite{Teissier2}, \cite{Cutkosky1, Cutkosky2}, \cite{Fillastre} and \cite{Mustata}.

\section{Mixed volume of convex bodies} \label{sec-mixed-vol}
The collection of all convex bodies in $\r^n$ is a cone, that is, we can add convex bodies and multiply a convex body with a
positive number. For two convex bodies $\Delta_1, \Delta_2 \subset \r^n$, their (Minkowski) sum $\Delta_1 + \Delta_2$ is  
$\{ x + y \mid x \in \Delta_1, y \in \Delta_2\}$. Let $\vol$ denote the $n$-dimensional volume in $\r^n$ with respect to the
standard Euclidean metric. The function $\vol$ is a homogeneous polynomial of degree $n$ on the cone of convex bodies,
i.e. its restriction to each finite dimensional section of the cone is a homogeneous polynomial of degree $n$. In other words,
for any collection of convex bodies $\Delta_1, \ldots, \Delta_r$, the function: 
$$P_{\Delta_1, \ldots, \Delta_r}(\lambda_1, \ldots, \lambda_r)
= \vol(\lambda_1\Delta_1 + \cdots + \lambda_r\Delta_r),$$
is a homogeneous polynomial of degree $n$ in $\lambda_1, \ldots, \lambda_r$. 
By definition the {\it mixed volume}  $V(\Delta_1,\dots,\Delta_n)$ of
an $n$-tuple $(\Delta_1,\dots,\Delta_n)$ of convex bodies
is the coefficient of the monomial $\lambda_1 \cdots \lambda_n$ in the polynomial
$P_{\Delta_1, \ldots, \Delta_n}(\lambda_1, \ldots, \lambda_n)$ divided by $n!$.
This definition implies that mixed volume is the {\it polarization} of the volume polynomial,
that is, it is the unique function on the $n$-tuples of convex bodies satisfying the
following:
\begin{itemize}
\item[(i)] (Symmetry) $V$ is symmetric with respect to permuting the bodies $\Delta_1, \ldots, \Delta_n$.
\item[(ii)] (Multi-linearity) It is {linear} in each argument with respect to the Minkowski sum. {The linearity} in first argument
means that for convex bodies $\Delta_1'$, $\Delta_1'',
\Delta_2,\dots,\Delta_n$, and real numbers $\lambda', \lambda'' \geq 0$ we have:
$$ V(\lambda'\Delta_1'+\lambda''\Delta_1'', \Delta_2, \dots, \Delta_n)=\lambda'V(\Delta_1', \Delta_2, \dots,
\Delta_n) + \lambda''V(\Delta_1'', \Delta_2, \dots, \Delta_n).$$
\item[(iii)] (Relation with volume) On the diagonal it coincides with the volume, i.e. if
$\Delta_1 =\cdots=\Delta_n=\Delta$, then $V(\Delta_1,\ldots,
\Delta_n)=\vol(\Delta)$.
\end{itemize}

The following inequality attributed to Alexandrov and Fenchel is important and very
useful in convex geometry (see \cite{BZ}):
\begin{Th}[Alexandrov-Fenchel] \label{thm-alexandrov-fenchel}
Let $\Delta_1, \ldots, \Delta_n$ be convex bodies
in $\r^n$. Then
$$ V(\Delta_1, \Delta_1, \Delta_3, \ldots, \Delta_n)
V(\Delta_2, \Delta_2, \Delta_3, \ldots, \Delta_n) \leq V(\Delta_1, \Delta_2, \ldots, \Delta_n)^2.$$
\end{Th}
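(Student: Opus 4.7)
The plan is to follow the classical approach of Alexandrov, in which the inequality is reduced through a sequence of approximations to an algebraic statement about a quadratic form of Lorentzian type.

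First I would reduce the statement to the case where $\Delta_1, \ldots, \Delta_n$ are convex polytopes in $\r^n$, using the fact that every convex body can be approximated in Hausdorff metric by polytopes and that the mixed volume $V$ is continuous in each of its arguments with respect to the Hausdorff metric. A further reduction restricts attention to \emph{strongly isomorphic} polytopes, i.e., those sharing a common refinement of normal fans; by perturbation any finite tuple of polytopes can be approximated this way. Within a class of polytopes strongly isomorphic to a fixed simple fan, a polytope is parametrized by the collection of support numbers $(h_F)$ indexed by facets $F$, and the mixed volume becomes a symmetric multilinear form on the real vector space $V_\Sigma$ of such support-number vectors (allowing virtual polytopes).

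Next, with $\Delta_3, \ldots, \Delta_n$ fixed, I would study the quadratic form $Q(x,y) = V(x, y, \Delta_3, \ldots, \Delta_n)$ on $V_\Sigma$. The inequality to prove is precisely that $Q$ is of \emph{Lorentzian} type, meaning $Q(x,x) Q(y,y) \leq Q(x,y)^2$ whenever $Q(y,y) > 0$. This follows if $Q$ has exactly one positive eigenvalue, the rest being non-positive, when restricted to the subspace on which $Q$ is non-degenerate; equivalently the signature is $(1, N-1, \ast)$. The Brunn-Minkowski inequality, applied along a one-parameter family $\Delta_1 + t \Delta_2$, fixes the sign and shows at least one positive direction exists; the main task is to rule out two independent positive directions.

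The heart of the proof — and the place I expect to spend the most effort — is establishing this signature claim. I would do it by induction on $n$. The base case $n = 2$ is elementary: $V$ is the usual area, and the inequality reduces to the isoperimetric-type inequality $V(\Delta_1, \Delta_1) V(\Delta_2, \Delta_2) \leq V(\Delta_1, \Delta_2)^2$, provable directly. For the inductive step, using the decomposition of $V$ into a sum over facets $F$ of $\Delta_n$ of mixed volumes of the face projections, one writes
\begin{equation*}
V(\Delta_1, \Delta_2, \Delta_3, \ldots, \Delta_n) = \frac{1}{n} \sum_F h_F(\Delta_n)\, V_{n-1}\bigl(\Delta_1|_F, \Delta_2|_F, \Delta_3|_F, \ldots, \Delta_{n-1}|_F\bigr),
\end{equation*}
and applies the inductive Lorentzian property of the $(n-1)$-dimensional mixed volume, together with positivity of the support numbers $h_F$, to transport the signature information from dimension $n-1$ to dimension $n$. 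This is the Alexandrov mixed-discriminant / hyperbolic quadratic form argument, and the technical obstacle is handling the kernel of $Q$ carefully (degeneracies coming from translations and from lower-dimensional $\Delta_i$), which requires a delicate non-degeneracy analysis of the relevant bilinear forms before the Cauchy–Schwarz-type inequality for Lorentzian forms can be invoked.
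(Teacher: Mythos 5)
The paper offers no proof of this theorem at all: it is recalled as a known, classical result and attributed to Alexandrov and Fenchel, with a pointer to the standard reference (Burago--Zalgaller \cite{BZ}). So there is no internal argument to compare your proposal against.

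As an outline of Alexandrov's original proof, your plan is essentially accurate: reduce by continuity of mixed volumes in the Hausdorff metric to polytopes, then to strongly isomorphic simple polytopes parametrized by support numbers; realize $Q(x,y) = V(x,y,\Delta_3,\ldots,\Delta_n)$ as a symmetric bilinear form; observe that the inequality is equivalent to $Q$ having Lorentzian signature (one positive eigenvalue, rest non-positive, on the quotient by the kernel); and prove the signature claim by induction on dimension via the facet decomposition $V(\Delta_1,\ldots,\Delta_n) = \frac{1}{n}\sum_F h_F(\Delta_n) V_{n-1}(\Delta_1|_F,\ldots,\Delta_{n-1}|_F)$, with the base case $n=2$ elementary. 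You correctly identify the delicate point as controlling the kernel of $Q$ and the non-degeneracy questions needed before the reversed Cauchy--Schwarz for Lorentzian forms can be invoked. What you have is a strategy, not a proof: the inductive step (deducing the signature claim in dimension $n$ from the Lorentzian property of the face forms $V_{n-1}(\cdot,\cdot,\Delta_3|_F,\ldots,\Delta_{n-1}|_F)$ weighted by the positive numbers $h_F(\Delta_n)$) is precisely where Alexandrov's argument is both clever and technically involved, requiring an eigenvalue-interlacing or Perron--Frobenius type analysis of the associated matrix and careful treatment of translation-invariance degeneracies. None of that is supplied, so the proposal is a correct road map rather than a completed argument. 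Since the paper deliberately treats the theorem as background, this is not a defect relative to what the paper does, but it should be recognized as an incomplete sketch.
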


In dimension $2$, this inequality is elementary. We call it the {\it generalized isoperimetric inequality}, because when
$\Delta_2$ is the unit ball it coincides with the classical isoperimetric inequality.
The celebrated {\it Brunn-Minkowski inequality} concerns volume of
convex bodies in $\r^n$. {It is an easy corollary of the Alexandrov-Fenchel
inequality. (For $n=2$ it is equivalent to the Alexandrov-Fenchel inequality.)}

\begin{Th}[Brunn-Minkowski] \label{th-Brunn-Mink}
Let $\Delta_1$, $\Delta_2$ be convex bodies in $\r^n$. Then
$$\vol(\Delta_1)^{1/n} + \vol(\Delta_2)^{1/n}\leq
\vol(\Delta_1+\Delta_2)^{1/n}.$$
\end{Th}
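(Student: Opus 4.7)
The plan is to derive the Brunn-Minkowski inequality from the Alexandrov-Fenchel inequality (Theorem \ref{thm-alexandrov-fenchel}) via the classical log-concavity argument on mixed volumes. First I would introduce, for $0 \leq i \leq n$, the mixed volumes
$$V_i = V(\underbrace{\Delta_1, \ldots, \Delta_1}_{n-i}, \underbrace{\Delta_2, \ldots, \Delta_2}_{i}),$$
so that by property (iii) of mixed volume, $V_0 = \vol(\Delta_1)$ and $V_n = \vol(\Delta_2)$. Applying Alexandrov-Fenchel with $\Delta_1$ and $\Delta_2$ in the first two slots and the remaining $n-2$ slots containing $n-i-1$ copies of $\Delta_1$ and $i-1$ copies of $\Delta_2$ yields $V_i^2 \geq V_{i-1} V_{i+1}$ for $1 \leq i \leq n-1$; that is, the finite sequence $(V_i)$ is log-concave.

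Second, I would deduce from this pairwise log-concavity the global interpolation bound
$$V_i \geq V_0^{(n-i)/n} V_n^{i/n} = \vol(\Delta_1)^{(n-i)/n}\, \vol(\Delta_2)^{i/n}.$$
This is a purely elementary statement: concavity of $i \mapsto \log V_i$ implies it lies above the chord through its endpoints. Degenerate cases in which some $V_i$ vanishes can be handled separately, e.g. by replacing each $\Delta_j$ by $\Delta_j + \varepsilon B$ for a unit ball $B$, invoking the non-degenerate bound, and sending $\varepsilon \to 0$ using continuity of volume under the Hausdorff metric.

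Finally, by multi-linearity and symmetry of mixed volume (properties (i) and (ii)),
$$\vol(\Delta_1+\Delta_2) = \sum_{i=0}^{n} \binom{n}{i} V_i \geq \sum_{i=0}^{n} \binom{n}{i} \vol(\Delta_1)^{(n-i)/n}\, \vol(\Delta_2)^{i/n},$$
which by the binomial theorem equals $\bigl(\vol(\Delta_1)^{1/n} + \vol(\Delta_2)^{1/n}\bigr)^n$; extracting $n$-th roots gives the stated inequality. The only substantive obstacle is the Alexandrov-Fenchel inequality itself, which is assumed here; everything else is bookkeeping together with the one-variable log-concavity lemma.
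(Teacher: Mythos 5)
Your proof is correct and takes the same route the paper indicates: the paper states Theorem \ref{th-Brunn-Mink} without proof, remarking only that it is an easy corollary of Alexandrov-Fenchel, and your argument (log-concavity of the sequence $V_i$, interpolation $V_i \geq V_0^{(n-i)/n} V_n^{i/n}$, then multilinear expansion plus the binomial theorem) is precisely the standard way to spell out that corollary. The perturbation by $\varepsilon B$ to handle degenerate $V_i$ is a sensible way to clean up the zero-volume case, though one could also dispose of it directly via monotonicity of volume.
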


\section{Mixed covolume of convex regions} \label{sec-convex-diag}
Let $\C$ be a strongly convex closed $n$-dimensional cone in $\r^n$ with apex at the origin. (A convex cone is strongly convex 
if it does not contain any lines through the origin.) We are interested in closed convex subsets of $\C$ which have 
bounded complement. 

\begin{Def} \label{def-convex-region}
We call a closed convex subset $\Gamma \subset \C$ a {\it $\C$-convex region} (or simply a convex region when the cone $\C$ is understood from the context) if for any $x \in \Gamma$ and $y \in \C$ we have $x + y \in \Gamma$. Moreover, we say that a convex region $\Gamma$ is {\it cobounded} if the complement 
$\C \setminus \Gamma$ is bounded. In this case the volume of $\C \setminus \Gamma$ is finite which we call the {\it covolume of $\Gamma$} and denote it by $\covol(\Gamma)$. One also refers to $\C \setminus \Gamma$ as a {\it $\C$-coconvex body}.
\end{Def}

The collection of $\C$-convex regions (respectively cobounded regions) is closed under the Minkowski sum and multiplication by positive scalars. 
Similar to the volume of convex bodies, one proves that the covolume of convex regions is a homogeneous polynomial \cite{Askold-Vladlen}. More precisely:
\begin{Th} \label{th-covol-polynomial}
Let $\Gamma_1, \ldots, \Gamma_r$ be cobounded $\C$-convex regions in the cone $\C$. Then the function
$$P_{\Gamma_1, \ldots, \Gamma_r}(\lambda_1, \ldots, \lambda_r) = \covol(\lambda_1\Gamma_1 + \cdots + \lambda_r\Gamma_r),$$ 
is a homogeneous polynomial of degree $n$ in the $\lambda_i$. 
\end{Th}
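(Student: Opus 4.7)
The plan is to mimic the classical proof that $\vol(\lambda_1 \Delta_1 + \cdots + \lambda_r \Delta_r)$ is a homogeneous polynomial of degree $n$ on the cone of convex bodies, adapting each step to the coconvex setting. Homogeneity of degree $n$ is immediate since $\lambda \C = \C$ implies $\covol(\lambda \Gamma) = \lambda^n \covol(\Gamma)$, so the real content is polynomiality in the $\lambda_i$.

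First, I would reduce to the polyhedral case. Call a cobounded $\C$-convex region $\Gamma$ \emph{polyhedral} if
\[
\Gamma = \C \cap \bigcap_{j=1}^{N} \{x : \langle \eta_j, x \rangle \geq c_j\}
\]
for finitely many covectors $\eta_j$ in the interior of the dual cone $\C^\ast$ and scalars $c_j$. Any cobounded $\C$-convex region is a Hausdorff limit of polyhedral ones (approximate the portion of $\partial \Gamma$ lying in the interior of $\C$ by tangent half-spaces), and both Minkowski sum and Lebesgue volume are continuous under Hausdorff convergence of sets contained in a common compact region of $\C$. Hence it suffices to prove polynomiality for polyhedral $\Gamma_i$ with coefficients depending continuously on the cosupport data, and then pass to the limit.

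Second, for polyhedral regions I would introduce the cosupport function $h_\Gamma(\eta) = \inf_{x \in \Gamma} \langle \eta, x \rangle$ for $\eta$ in the interior of $\C^\ast$. The infimum is finite and attained, and $h_{\lambda_1 \Gamma_1 + \cdots + \lambda_r \Gamma_r}(\eta) = \sum_i \lambda_i h_{\Gamma_i}(\eta)$ is linear in the $\lambda_i$. Fix an open chamber $U \subset \r^r_{>0}$ on which the combinatorial type of $\C \setminus \sum_i \lambda_i \Gamma_i$ is constant (the list of facets and their incidence relations stable). On such a chamber, decompose the coconvex body $\C \setminus \sum_i \lambda_i \Gamma_i$ into pyramids over its facets from the apex of $\C$, giving a Cauchy-type formula
\[
\covol\bigl(\textstyle \sum_i \lambda_i \Gamma_i \bigr) = \frac{1}{n} \sum_{F} h_{\sum_i \lambda_i \Gamma_i}(\eta_F) \, \vol_{n-1}(F),
\]
where the sum runs over facets $F$ of $\C \setminus \sum_i \lambda_i \Gamma_i$ that are not contained in $\partial \C$, with outward normal $\eta_F$; facets on $\partial \C$ contribute zero because $h_{\sum_i \lambda_i \Gamma_i}$ vanishes on $\partial \C^\ast$ in the relevant sense, or can be handled by a boundary correction. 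Each facet $F$ is itself a coconvex body in an $(n-1)$-dimensional section, so by induction on $n$ its $(n-1)$-volume is a polynomial of degree $n-1$ in the $\lambda_i$; combined with the degree-one dependence of $h_{\sum_i \lambda_i \Gamma_i}(\eta_F)$, each summand is polynomial of degree $n$ on $U$.

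Third, I would glue across chambers. The chamberwise polynomials have uniformly bounded degree $n$; continuity of $\covol$ under Hausdorff convergence forces the polynomials on adjacent chambers to agree on the common wall, and two polynomials of degree $\leq n$ agreeing on a hyperplane differ by a polynomial vanishing there, so an inductive walk across walls (together with homogeneity) identifies them as a single global polynomial on $\r^r_{> 0}$. The main obstacle is precisely this combinatorial non-uniformity: facets of $\C \setminus \sum_i \lambda_i \Gamma_i$ may merge or disappear as the $\lambda_i$ vary, so the Cauchy-type formula is only locally polynomial, and the polynomiality assertion is really a global regularity statement that must be recovered from the local pictures; handling the boundary facets lying in $\partial \C$ carefully (so that they do not spuriously contribute) is the subtle technical point, since $\C$ is unbounded and the standard support-function apparatus for convex bodies does not directly apply.
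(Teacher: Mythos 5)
The paper does not prove this theorem itself; it cites Khovanskii--Timorin \cite{Askold-Vladlen}, whose argument (as far as I know) is different: cut $\C$ by a half-space far from the apex and write $\covol(\Gamma)$ as a difference of volumes of two bounded convex bodies, reducing directly to the classical polynomiality of the volume of Minkowski combinations. Your pyramid-decomposition route can also be made to work, but two steps are wrong as written. An inner facet of $\C\setminus\sum_i\lambda_i\Gamma_i$, say with inward unit normal $\eta$ in the interior of $\C^*$, is \emph{not} a coconvex body in a hyperplane; it is the \emph{bounded polytope} $\sum_i\lambda_i F_i(\eta)$, where $F_i(\eta)$ is the face of $\Gamma_i$ on which $\langle\eta,\cdot\rangle$ attains its minimum, bounded because $\C\cap\{\langle\eta,\cdot\rangle=c\}$ is compact for $\eta$ interior to $\C^*$. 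Consequently the polynomiality of the $(n-1)$-volume of a facet must come from the classical Minkowski theory for bounded polytopes, not from an induction on covolumes: as set up, your induction has no base case and the inductive step does not apply.

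The chamber-gluing is also defective: two degree-$\leq n$ polynomials that agree on a wall may still differ by anything divisible by the wall's defining linear form, so matching along walls identifies nothing. But this obstacle is illusory. For polyhedral $\Gamma_i$ the inward normal fan of $\sum_i\lambda_i\Gamma_i$ on $\C^*$ is the common refinement of the normal fans of the $\Gamma_i$ for every $\lambda$ in the open positive orthant, so there is exactly one chamber. Equivalently, with $N$ the fixed finite set of rays of that common refinement lying in the interior of $\C^*$, the identity
\[
\covol\Bigl(\sum_i\lambda_i\Gamma_i\Bigr)=\frac{1}{n}\sum_{\eta\in N}h_{\sum_i\lambda_i\Gamma_i}(\eta)\,\vol_{n-1}\Bigl(\sum_i\lambda_i F_i(\eta)\Bigr)
\]
holds on all of $\r^r_{>0}$ (summands for $\eta$ not realizing a facet simply vanish), and is manifestly a homogeneous polynomial of degree $n$ since $h$ is linear in $\lambda$ and each $\vol_{n-1}$ factor is polynomial of degree $n-1$; boundary values follow by continuity. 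Your polyhedral-approximation reduction is otherwise fine.
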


As in the case of convex bodies, one uses the above theorem to define mixed covolume of cobounded regions. By definition the {\it mixed covolume}  $CV(\Gamma_1, \ldots, \Gamma_n)$ of
an $n$-tuple $(\Gamma_1,\dots,\Gamma_n)$ of cobounded convex regions
is the coefficient of the monomial $\lambda_1 \cdots \lambda_n$ in the polynomial 
$P_{\Gamma_1,\ldots, \Gamma_n}(\lambda_1, \ldots, \lambda_n)$ divided by $n!$.
That is, mixed covolume is the unique function on the $n$-tuples of cobounded regions satisfying the
following:
\begin{itemize}
\item[(i)] (Symmetry) $CV$ is symmetric with respect to permuting the regions $\Gamma_1, \ldots, \Gamma_n$.
\item[(ii)] (Multi-linearity) It is linear in each argument with respect to the Minkowski sum. 
\item[(iii)] (Relation with covolume) For any cobounded region $\Gamma \subset \C$: 
$$CV(\Gamma, \ldots, \Gamma)=\covol(\Gamma).$$
\end{itemize}

The mixed covolume satisfies an Alexandrov-Fenchel inequality \cite{Askold-Vladlen}. Note that 
the inequality is reversed compared to the Alexandrov-Fenchel for mixed volumes of convex bodies.
\begin{Th}[Alexandrov-Fenchel for mixed covolume] \label{thm-alexandrov-fenchel-covolume}
Let $\Gamma_1, \ldots, \Gamma_n$ be cobounded $\C$-convex regions. Then:
$$ CV(\Gamma_1, \Gamma_1, \Gamma_3, \ldots, \Gamma_n)
CV(\Gamma_2, \Gamma_2, \Gamma_3, \ldots, \Gamma_n) \geq CV(\Gamma_1, \Gamma_2, \Gamma_3, \ldots, \Gamma_n)^2.$$
\end{Th}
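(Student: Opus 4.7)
The plan is to follow Alexandrov's scheme for proving the classical Alexandrov-Fenchel inequality (Theorem \ref{thm-alexandrov-fenchel}), adapted to the coconvex setting. By the polynomiality in Theorem \ref{th-covol-polynomial}, the mixed covolume is continuous in each argument under Hausdorff convergence on cobounded $\C$-convex regions, so one can reduce to the case where each $\Gamma_i$ is polyhedral. A further perturbation allows me to assume that $\Gamma_1, \ldots, \Gamma_n$ are \emph{strongly isomorphic} polyhedral regions: they share the same finite list of outward facet normals $u_1, \ldots, u_N$, and each $\Gamma_j$ is encoded by its support numbers $h_{ij} = \min_{x \in \Gamma_j} \langle u_i, x \rangle$.

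In this strongly isomorphic polyhedral setting, a facet-wise decomposition of the bounded region $\C \setminus (\lambda_1 \Gamma_1 + \cdots + \lambda_n \Gamma_n)$ yields an explicit formula expressing the mixed covolume as a linear combination of the numbers $h_{i,1}$ weighted by $(n-1)$-dimensional mixed covolumes of the corresponding facets, in analogy with the classical facet formula for mixed volumes. Fixing $\Gamma_3, \ldots, \Gamma_n$ and letting $\Gamma_1, \Gamma_2$ vary, this exhibits the map $(\Gamma_1, \Gamma_2) \mapsto CV(\Gamma_1, \Gamma_2, \Gamma_3, \ldots, \Gamma_n)$ as a symmetric bilinear form $Q$ on a finite-dimensional vector space of support-number vectors.

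The key step is to show that $Q$ is \emph{hyperbolic} of signature $(1, N-1)$, with the diagonal direction $\Gamma_1 = \Gamma_2$ lying in the positive cone; this sign configuration, reversed relative to the classical case, is precisely what flips the direction of the inequality in Theorem \ref{thm-alexandrov-fenchel-covolume}. I would proceed by induction on the dimension $n$: hyperbolicity of $Q$ in dimension $n$ is deduced from hyperbolicity of the facial forms in dimension $n-1$. The base case $n = 2$ is the Brunn-Minkowski inequality \eqref{equ-intro-BM-inequ-covol} for covolumes, which can be proved directly by truncating $\C$ with a sufficiently large convex body $P$ containing each $\C \setminus \Gamma_j$, writing $\covol(\Gamma_j) = \vol(P) - \vol(P \cap \Gamma_j)$, and carefully passing to the limit using the classical Brunn-Minkowski inequality.

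The hard part will be the hyperbolicity assertion for $Q$. Alexandrov's proof of signature requires a sufficiently rich family of admissible infinitesimal perturbations of the supporting polyhedra that preserves strong isomorphism, and in the coconvex context one must verify that enough such perturbations are available inside the unbounded cone $\C$, where facets cannot be freely deformed near infinity and the ``coconvex'' sign conventions must be tracked with care. Once hyperbolicity is established, Theorem \ref{thm-alexandrov-fenchel-covolume} follows from the standard reverse Cauchy-Schwarz inequality satisfied by any symmetric bilinear form of Lorentzian signature evaluated on vectors in the positive cone.
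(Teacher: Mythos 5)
Theorem \ref{thm-alexandrov-fenchel-covolume} is not proved in this paper; it is stated as a result of Khovanskii and Timorin and cited from \cite{Askold-Vladlen}. So there is no internal proof to compare against, and you are in effect sketching the cited reference's argument from scratch.

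Your overall framework --- reduce to strongly isomorphic polyhedral regions, encode them by support numbers, view the mixed covolume as a symmetric bilinear form $Q$, analyze its signature, and finish with a Cauchy-Schwarz-type inequality --- is in the spirit of Alexandrov's scheme. But the central signature claim is backwards and, as written, would prove the wrong inequality. You assert that $Q$ is hyperbolic of signature $(1,N-1)$ with the diagonal $\Gamma_1 = \Gamma_2$ lying in the positive (time-like) cone, and you invoke the reverse Cauchy-Schwarz for Lorentzian forms. That is exactly the configuration used in the classical convex Alexandrov-Fenchel theorem and yields $CV(\Gamma_1,\Gamma_1,\ldots)\,CV(\Gamma_2,\Gamma_2,\ldots) \leq CV(\Gamma_1,\Gamma_2,\ldots)^2$, the opposite of Theorem \ref{thm-alexandrov-fenchel-covolume}. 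For the coconvex direction the correct mechanism is the \emph{ordinary} Cauchy-Schwarz inequality: the restriction of $Q$ to the relevant space of coconvex support data must be positive (equivalently negative) semi-definite. The case $n=2$ already makes this visible: expanding $\covol(\lambda_1\Gamma_1 + \lambda_2\Gamma_2) = \lambda_1^2 \covol(\Gamma_1) + 2\lambda_1\lambda_2\, CV(\Gamma_1,\Gamma_2) + \lambda_2^2\covol(\Gamma_2)$, the Brunn-Minkowski inequality of Corollary \ref{cor-Brunn-Mink-covol} is exactly the statement that this quadratic form has non-positive discriminant, i.e., is positive semi-definite, not Lorentzian. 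You write that ``the sign configuration is reversed relative to the classical case,'' which is the right intuition, but the explicit signature $(1,N-1)$ you state is not reversed at all; it is identical to the classical one. Consequently the inductive step (hyperbolicity of $Q$ from hyperbolicity of facial forms) would also have to be recast in terms of semi-definiteness before the argument could close. An alternative, and arguably cleaner, route is to truncate $\C$ by a large simplex $P$, rewrite $\covol(\Gamma_j) = \vol(P) - \vol(P \cap \Gamma_j)$, and deduce the reversed inequality for mixed covolumes from the classical Alexandrov-Fenchel inequality applied to the convex bodies $P\cap\Gamma_j$, tracking the sign change that the subtraction produces; this avoids re-running Alexandrov's signature analysis in the coconvex setting altogether.
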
 

The (reversed) Alexandrov-Fenchel inequality implies a (reversed) Brunn-Minkowski inequality. 
{ (For $n=2$ it is equivalent to the Alexandrov-Fenchel inequality.)}
\begin{Cor}[Brunn-Minkowski for covolume] \label{cor-Brunn-Mink-covol}
Let $\Gamma_1$, $\Gamma_2$ be cobounded $\C$-convex regions. Then:
$$\covol(\Gamma_1)^{1/n} + \covol(\Gamma_2)^{1/n} \geq
\covol(\Gamma_1 + \Gamma_2)^{1/n}.$$
\end{Cor}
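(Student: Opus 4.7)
The plan is to derive this as a routine corollary of the Alexandrov--Fenchel inequality for mixed covolumes (Theorem \ref{thm-alexandrov-fenchel-covolume}), paralleling the classical derivation of the Brunn--Minkowski inequality from the Alexandrov--Fenchel inequality for convex bodies, except that every inequality is reversed because the inequality in Theorem \ref{thm-alexandrov-fenchel-covolume} is itself reversed compared to Theorem \ref{thm-alexandrov-fenchel}.

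First, I would introduce the diagonal slice of mixed covolumes
$$V_i \;=\; CV(\underbrace{\Gamma_1, \ldots, \Gamma_1}_{i\text{ times}}, \underbrace{\Gamma_2, \ldots, \Gamma_2}_{n-i\text{ times}}), \qquad i = 0, 1, \ldots, n,$$
so that $V_0 = \covol(\Gamma_2)$ and $V_n = \covol(\Gamma_1)$. By the polynomial identity of Theorem \ref{th-covol-polynomial} together with multilinearity and symmetry of the polarization $CV$, specializing at $\lambda_1 = \lambda_2 = 1$ yields the binomial-type expansion
$$\covol(\Gamma_1 + \Gamma_2) \;=\; \sum_{i=0}^{n} \binom{n}{i}\, V_i.$$

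Next, I would apply Theorem \ref{thm-alexandrov-fenchel-covolume} for each $k \in \{0, 1, \ldots, n-2\}$, taking the trailing arguments $\Gamma_3, \ldots, \Gamma_n$ to consist of $k$ copies of $\Gamma_1$ followed by $n-2-k$ copies of $\Gamma_2$. This immediately gives the log-convexity relation
$$V_k \cdot V_{k+2} \;\geq\; V_{k+1}^2.$$
Iterating this (i.e., using that $i \mapsto \log V_i$ is a convex function on $\{0, 1, \ldots, n\}$) yields
$$V_i \;\leq\; V_0^{(n-i)/n} \cdot V_n^{i/n}.$$
Plugging into the expansion above and applying the binomial theorem gives
$$\covol(\Gamma_1 + \Gamma_2) \;\leq\; \sum_{i=0}^{n} \binom{n}{i}\, V_0^{(n-i)/n} V_n^{i/n} \;=\; \bigl( V_0^{1/n} + V_n^{1/n} \bigr)^n,$$
and taking $n$-th roots gives the claimed inequality.

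I do not anticipate any substantive obstacle: this is essentially the standard ``Alexandrov--Fenchel implies Brunn--Minkowski'' argument, the only novelty being the direction of the inequality throughout. The single point of hygiene is the degenerate case where $V_0 = 0$ or $V_n = 0$, where the log-convexity manipulation formally breaks down. In that situation one of $\Gamma_1, \Gamma_2$ must in fact equal $\C$ (since a closed cobounded $\C$-convex region whose complement in $\C$ has Lebesgue measure zero is dense in $\C$ and, being closed, equals $\C$), and because $\Gamma + \C = \Gamma$ for any $\C$-convex region $\Gamma$, the inequality reduces to the trivial bound $\covol(\Gamma)^{1/n} + 0 \geq \covol(\Gamma)^{1/n}$.
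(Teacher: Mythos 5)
Your proof is the standard derivation of the Brunn--Minkowski inequality from the Alexandrov--Fenchel inequality, which is exactly what the paper invokes: the corollary is stated without a written proof immediately after Theorem \ref{thm-alexandrov-fenchel-covolume}, as the reversed analogue of the classical implication. Your argument is correct, including the treatment of the degenerate case.
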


\section{Semigroups of integral points} \label{sec-semigp-int}
In this section we recall some general facts from \cite{KKh-Annals} about the asymptotic behavior of semigroups of integral points. 
Let $S \subset \z^n \times \z_{\geq 0}$ be an additive semigroup. 
Let $\pi: \r^n \times \r \to \r$ denote the projection onto the second factor, and let 
$S_k = S \cap \pi^{-1}(k)$ be the set of points in $S$ at level $k$. 
For simplicity, assume $S_1 \neq \emptyset$ and that $S$ generates the whole lattice $\z^{n+1}$.
Define the function $H_S$ by:
$$H_S(k) = \#S_k.$$ We call $H_S$ the {\it Hilbert function of the semigroup $S$}. We wish to 
describe the asymptotic behavior of $H_S$ as $k \to \infty$.

Let $C(S)$ be the closure of the convex hull of $S \cup \{0\}$, that is, the smallest closed
convex cone (with apex at the origin) containing $S$. 
We call the projection of the convex set $C(S) \cap \pi^{-1}(1)$ to $\r^n$ (under the
projection onto the first factor $(x, 1) \mapsto x$),
the {\it Newton-Okounkov convex set of the semigroup $S$} and denote it by $\Delta(S)$.
In other words,
$$\Delta(S) = \overline{\bigcup_{k>0} \{x/k \mid (x, k) \in S_k\}}.$$
{ (From the fact that $S$ is a semigroup one can show that $\Delta(S)$ is a convex set.)}
If $C(S) \cap \pi^{-1}(0) = \{0\}$ then $\Delta(S)$ is compact and hence a convex body.

The Newton-Okounkov convex set $\Delta(S)$ is responsible for the
asymptotic behavior of the Hilbert function of $S$ (see \cite[Corollary 1.16]{KKh-Annals}):
\begin{Th} \label{th-semigp-NO}
The limit
$$\lim_{k \to \infty} \frac{H_S(k)}{k^n},$$ exists and is equal to 
$\vol(\Delta(S))$.
\end{Th}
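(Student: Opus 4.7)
The plan is to compare $\#S_k$ with the number of lattice points of $\z^n$ in the dilate $k\Delta(S)$, and then invoke standard asymptotics for lattice point counts in convex bodies. Under the identification $\pi^{-1}(k) \cap \z^{n+1} \cong \z^n$ via $(x,k) \mapsto x$, the set $C(S) \cap \pi^{-1}(k)$ corresponds to $k\Delta(S) \subset \r^n$, so the idea is to show
\begin{equation*}
H_S(k) \;=\; \#S_k \;=\; \#\bigl(k\Delta(S) \cap \z^n\bigr) \;+\; o(k^n),
\end{equation*}
and then apply the classical fact $\#(k\Delta(S) \cap \z^n) = \vol(\Delta(S))k^n + O(k^{n-1})$ for the convex body $\Delta(S)$ (this last asymptotic follows, for instance, by comparing $k\Delta(S)$ with unions of unit cubes centered at its interior lattice points, the error coming from a tubular neighborhood of the boundary of thickness $O(1)$).

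The first easy half is the inclusion $S_k \subset kC(S) \cap \pi^{-1}(k) \cap \z^{n+1}$, which gives $H_S(k) \leq \#(k\Delta(S)\cap \z^n)+O(k^{n-1})$ after accounting for the fact that $kC(S)\cap\pi^{-1}(k)$ projects onto $k\Delta(S)$. The content of the theorem is therefore the reverse inequality: for large $k$, \emph{most} lattice points of $C(S)$ at level $k$ actually lie in $S$. The main tool I would use here is Khovanskii's structure theorem for finitely generated semigroups of integral points (which is really the combinatorial heart of the result): since $S$ generates $\z^{n+1}$ as a group and $S_1 \neq \emptyset$, there exist finitely many elements $a_1,\dots,a_r \in S$ and a vector $v$ in the interior of $C(S)$ such that every lattice point of $C(S)$ lying in the shifted subcone $v + C(S)$ belongs to $S$. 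In other words, $S$ fills up the interior of $C(S)$ away from a neighborhood of its boundary facets.

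Granted this, the proof is completed by a boundary-versus-bulk estimate. The set
\begin{equation*}
\bigl(kC(S) \cap \pi^{-1}(k)\bigr) \setminus \bigl(v + kC(S)\bigr)
\end{equation*}
sits inside a tubular neighborhood of $\partial(kC(S))\cap \pi^{-1}(k)$ of bounded thickness (independent of $k$), hence its number of lattice points is $O(k^{n-1})$. The remaining lattice points at level $k$ lie in $v+C(S)$ and therefore in $S$, showing $\#S_k \geq \#(k\Delta(S)\cap \z^n) - O(k^{n-1})$. Combining the two inequalities,
\begin{equation*}
\lim_{k\to\infty} \frac{H_S(k)}{k^n} \;=\; \lim_{k\to\infty} \frac{\#(k\Delta(S)\cap \z^n)}{k^n} \;=\; \vol(\Delta(S)).
\end{equation*}

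The main obstacle is exactly the Khovanskii-type statement that the semigroup eventually occupies essentially all lattice points in the interior of its cone. Everything else — the bulk/boundary counting and the Ehrhart-type asymptotic for $k\Delta(S)$ — is routine convex-geometric bookkeeping. The hypothesis that $S$ generates $\z^{n+1}$ is crucial in this step, since otherwise $S$ would sit inside a proper sublattice and the constant in front of $k^n$ would be rescaled by the covolume of that sublattice.
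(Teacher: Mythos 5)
The paper does not prove Theorem~\ref{th-semigp-NO}; it cites \cite[Corollary 1.16]{KKh-Annals}, so I will compare your proposal against the argument in that reference. Your overall plan (upper bound from counting lattice points in $k\Delta(S)$, lower bound by showing the semigroup eventually occupies almost all lattice points at level $k$) is the right one, and the upper bound and the bulk-versus-boundary bookkeeping are fine. But there is a genuine gap in the lower bound: the Khovanskii structure theorem you invoke — the existence of a vector $v$ with $(v + C(S)) \cap \z^{n+1} \subset S$ — is a theorem about \emph{finitely generated} semigroups, and nothing in the hypotheses of Theorem~\ref{th-semigp-NO} makes $S$ finitely generated. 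For a general $S$ as in the statement, that conclusion is simply false: the cone $C(S)$ need not even be polyhedral, and $S$ can approach the boundary rays of $C(S)$ so slowly that no single translate $v + C(S)$ is contained in $S$. So you cannot apply the structure theorem directly to $S$.

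The fix — and this is precisely what \cite{KKh-Annals} does — is an approximation-from-inside argument. Given $\epsilon > 0$, choose finitely many elements $(x_1,k_1),\dots,(x_m,k_m) \in S$ whose rescaled projections $x_i/k_i$ have convex hull of volume at least $\vol(\Delta(S)) - \epsilon$ (possible by the definition of $\Delta(S)$ as a closed convex hull), and add finitely more elements of $S$ so that the chosen set generates $\z^{n+1}$ as a group. Let $S' \subset S$ be the semigroup they generate. Now $S'$ \emph{is} finitely generated, $C(S')$ is rational polyhedral, $S'$ generates $\z^{n+1}$, and $\vol(\Delta(S')) \geq \vol(\Delta(S)) - \epsilon$. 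Applying Khovanskii's theorem to $S'$ and the boundary estimate you describe gives $\liminf_k H_S(k)/k^n \geq \liminf_k \#S'_k/k^n \geq \vol(\Delta(S')) \geq \vol(\Delta(S)) - \epsilon$. Letting $\epsilon \to 0$ and combining with your upper bound finishes the proof. Also note that your statement of the structure theorem is garbled — the elements $a_1,\dots,a_r$ you introduce play no role in the conclusion; the theorem is that for a finitely generated $S'$ with $G(S') = \z^{n+1}$, there is a single $v$ with $(v + C(S')) \cap \z^{n+1} \subset S'$.
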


\section{Primary sequences in a semigroup and convex regions} \label{sec-semigp-ideal}
In this section we discuss the notion of a primary graded sequence of subsets in a semigroup 
and describe its asymptotic behavior using Theorem \ref{th-semigp-NO}. 
In section \ref{sec-valuation-ideal} we will employ this to describe the asymptotic behavior of the Hilbert-Samuel function of a graded
sequence of $\m$-primary ideals in a local domain.
 

Let $\S \subset \z^n$ be an additive semigroup containing the origin. 
Without loss of generality we assume that $\S$ generates the whole $\z^n$.
Let as above $\C = C(\S)$ denote the cone of $\S$ i.e. 
the closure of convex hull of $\S = \S \cup \{0\}$. We also assume that $\C$ is a strongly convex cone, i.e. 
it does not contain any lines through the origin.

For two subsets $\I, \J \subset \S$, the sum $\I + \J$ is the set $\{x+y \mid x \in \I,~ y \in \J\}$.
For any integer $k > 0$, by the product $k*\I$ we mean $\I + \cdots + \I$ ($k$ times).



\begin{Def} \label{def-graded-seq-semigp-ideal}
A {\it graded sequence of subsets} in $\S$  is a sequence $\I_\bullet = 
(\I_1, \I_2, \ldots)$ of subsets such that for any $k, m > 0$ we have 
$\I_k + \I_m \subset \I_{k+m}$. 
\end{Def}

\begin{Ex} \label{ex-powers-of-semigp-ideal}
Let $\I \subset \S$. Then the sequence $\I_\bullet$ defined by $\I_k = k * \I$ is clearly 
a graded sequence of subsets. 
\end{Ex}

Let $\I'_\bullet$, $\I''_\bullet$ be graded sequences of subsets. Then the sequence 
$\I_\bullet = \I'_\bullet + \I''_\bullet$ defined by $$\I_k = \I'_k + \I''_k,$$
is also a graded sequence of subsets which we call the {\it sum of sequences 
$\I'_\bullet$ and $\I''_\bullet$}.

Let $\ell: \r^n \to \r$ be a linear function. For any $a \in \r$ let $\ell_{\geq a}$ (respectively $\ell_{> a}$) denote the half-space $\{ x \mid \ell(x) \geq a\}$ 
(respectively $\{ x \mid \ell(x) > a\}$), and similarly for $\ell_{\leq a}$ and $\ell_{< a}$.
By assumption the cone $\C = C(S)$ associated to the semigroup $\S$ is strongly convex. Thus we can find a linear 
function $\ell$ such that 
the cone $\C = C(\S)$ lies in $\ell_{\geq 0}$ and it intersects the hyperplane $\ell^{-1}(0)$ only at the origin. Let us fix such a linear function $\ell$.

We will be interested in graded sequences of subsets $\I_\bullet$ satisfying the following condition:
\begin{Def} \label{def-primary-sequence}
{We say that a graded sequence of subsets $\I_\bullet$ is {\it primary} if 
there exists an integer $t_0 > 0$ such that for any integer $k >0$ 
we have: }
\begin{equation} \label{equ-primary-semigroup-ideal}
\I_k \cap \ell_{\geq kt_0} = \S \cap \ell_{\geq kt_0}.
\end{equation}
\end{Def}

\begin{Rem}
One verifies that if $\ell'$ is another linear function such that $\C$ lies in $\ell'_{\geq 0}$ and it intersects the hyperplane $\ell'^{-1}(0)$ only at the origin then it automatically satisfies \eqref{equ-primary-semigroup-ideal} with perhaps a different constant $t'_0 > 0$. Hence the condition of being a primary graded sequence does not depend on the linear function $\ell$. Nevertheless when we refer to a primary graded sequence $\I_\bullet$, choices of a linear function $\ell$ and an integer $t_0 > 0$ are implied.
\end{Rem}

\begin{Prop} \label{prop-I_k-cofinite}
Let $\I_\bullet$ be a primary graded sequence. 
Then for all $k > 0$, the set $\S \setminus \I_k$ is finite.
\end{Prop}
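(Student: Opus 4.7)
The plan is to split $\S \setminus \I_k$ along the hyperplane $\ell^{-1}(kt_0)$ and handle the two pieces separately, exploiting the primary condition on one side and strong convexity of $\C$ on the other.

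First I would record a geometric lemma: for any $M > 0$, the slab $\C \cap \ell^{-1}([0,M])$ is bounded. Indeed, if it were unbounded, one could find $x_n \in \C$ with $\ell(x_n) \leq M$ and $|x_n| \to \infty$; passing to a subsequence, $x_n/|x_n|$ converges to a unit vector $y$, which lies in $\C$ (since $\C$ is a closed cone) and satisfies $\ell(y) = 0$. This contradicts the hypothesis that $\C$ meets $\ell^{-1}(0)$ only at the origin. Since $\S \subset \C \cap \z^n$ is discrete, it follows that $\S \cap \ell_{< M}$ is \emph{finite} for every finite $M$.

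Now fix $k > 0$ and decompose
\[
\S \setminus \I_k = \bigl(\S \cap \ell_{\geq kt_0}\bigr) \setminus \I_k \;\cup\; \bigl(\S \cap \ell_{< kt_0}\bigr) \setminus \I_k.
\]
By the primary condition \eqref{equ-primary-semigroup-ideal}, $\S \cap \ell_{\geq kt_0} = \I_k \cap \ell_{\geq kt_0} \subset \I_k$, so the first piece is empty. The second piece is contained in $\S \cap \ell_{< kt_0}$, which is finite by the lemma above (applied with $M = kt_0$). Therefore $\S \setminus \I_k$ is finite, as required.

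There is essentially no obstacle here: the whole content of the statement is packed into the geometric lemma about bounded slabs of a strongly convex cone, which is an immediate consequence of $\C$ being closed and meeting $\ell^{-1}(0)$ only at the origin. The primary condition then trivially disposes of the ``tail'' part, and discreteness of $\S$ turns boundedness into finiteness.
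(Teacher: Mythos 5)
Your proof is correct and follows essentially the same route as the paper: the primary condition shows $\S\setminus\I_k$ lies in $\S\cap\ell_{<kt_0}$, and strong convexity of $\C$ (equivalently, $\C$ meeting $\ell^{-1}(0)$ only at the origin) makes that slab bounded, hence its intersection with the discrete set $\S$ finite. The only difference is that you spell out the boundedness of the slab via a normalization/compactness argument, which the paper leaves as an immediate consequence.
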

\begin{proof}
Since $\C$ intersects $\ell^{-1}(0)$ only at the origin it follows that for any $k > 0$ the set $\C \cap \ell_{< kt_0}$ is bounded which implies that
$\S \cap \ell_{< kt_0}$ is finite. But by \eqref{equ-primary-semigroup-ideal}, $\S \setminus \I_k \subset \S \cap \ell_{<kt_0}$ and hence is finite.
\end{proof}

\begin{Def} \label{def-Hilbert-Samuel-semigroup}
Let $\I_\bullet$ be a primary graded sequence. Define the function $H_{\I_\bullet}$ by: $$H_{\I_\bullet}(k) = \#(\S \setminus \I_k).$$ 
(Note that by Proposition \ref{prop-I_k-cofinite} this number is finite for all $k>0$.) We call it the {\it Hilbert-Samuel function of $\I_\bullet$}.
\end{Def}

To a primary graded sequence of subsets $\I_\bullet$ we can associate a $\C$-convex region
$\Gamma(\I_\bullet)$ (see Definition \ref{def-convex-region}). This convex set encodes information about the 
asymptotic behavior of the Hilbert-Samuel function of $\I_\bullet$.

\begin{Def} \label{def-Gamma-I-semigroup}
Let $\I_\bullet$ be a primary graded sequence of subsets. Define the convex set $\Gamma(\I_\bullet)$ by
$$\Gamma(\I_\bullet) = \overline{\bigcup_{k>0} \{x/k \mid x \in \I_k\}}.$$
(One can show that $\Gamma(\I_\bullet)$ is an unbounded convex set in $\C$.)
\end{Def}

\begin{Prop}
Let $\I_\bullet$ be a primary graded sequence. 
Then $\Gamma = \Gamma(\I_\bullet)$ is a $\C$-convex region in the cone 
$\C$, i.e. for any $x \in \Gamma$, $x+\C \subset \Gamma$. 
Moreover, the region $\Gamma$
is cobounded i.e. $\C \setminus \Gamma$ is bounded.  
\end{Prop}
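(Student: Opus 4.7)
The plan is to derive both assertions from a single geometric inclusion. First, the containment $\Gamma \subset \C$ and the fact that $\Gamma$ is closed and convex are immediate from the definition: each rescaled set $(1/k)\I_k$ lies in $\C$ because $\I_k \subset \S \subset \C$ and $\C$ is a closed convex cone, so $\Gamma$, being the closed convex hull of their union, lies in $\C$.

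The heart of the proof is the key inclusion
$$ \Gamma \supset \C \cap \ell_{\geq t_0}. $$
Coboundedness follows immediately from this: $\C \setminus \Gamma \subset \C \cap \ell_{< t_0}$, and the latter is bounded because strong convexity of $\C$ together with positivity of $\ell$ on $\C \setminus \{0\}$ forces $\ell$ to be bounded below by a positive multiple of the norm on $\C$ (argue on the compact cross-section $\C \cap S^{n-1}$). To prove the inclusion, I would first observe that $\bigcup_{N \geq 1}(1/N)\S$ is dense in $\C = \overline{\conv(\S)}$: a rational convex combination $\sum_i (p_i/N) s_i$ with $s_i \in \S$, $p_i \in \z_{\geq 0}$, $\sum_i p_i = N$ equals $(1/N)w$ with $w = \sum_i p_i s_i \in \S$, and such rational combinations are dense in all convex combinations of $\S$, hence in $\C$. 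Now given $c \in \C$ with $\ell(c) > t_0$, pick $w_N \in \S$ with $w_N/N \to c$; then $\ell(w_N)/N \to \ell(c) > t_0$, so eventually $\ell(w_N) \geq Nt_0$, and the primary condition $\I_N \cap \ell_{\geq Nt_0} = \S \cap \ell_{\geq Nt_0}$ forces $w_N \in \I_N$, giving $c \in \Gamma$. For the boundary case $\ell(c) = t_0$, approximate $c$ within $\C$ by the cone-multiples $(1 + 1/n)c$, which have $\ell$-value $t_0 + t_0/n > t_0$ and hence already lie in $\Gamma$; closedness then gives $c \in \Gamma$.

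Finally, for the $\C$-convex region property $\Gamma + \C \subset \Gamma$, fix $x \in \Gamma$ and $c \in \C$. The case $c = 0$ is trivial; otherwise $\ell(c) > 0$ by the choice of $\ell$, so $\ell(Nc) \geq t_0$ for all sufficiently large integers $N$, and the previous inclusion yields $Nc \in \Gamma$. By convexity,
$$ \left(1 - \tfrac{1}{N}\right) x + \tfrac{1}{N}(Nc) \;=\; \left(1 - \tfrac{1}{N}\right) x + c \;\in\; \Gamma, $$
and letting $N \to \infty$ gives $x + c \in \Gamma$ by closedness of $\Gamma$. The main subtlety, as I see it, is the density step combined with the handling of the boundary $\ell(c) = t_0$; once $\Gamma \supset \C \cap \ell_{\geq t_0}$ is available, both coboundedness and the $\C$-convex region property fall out of short convexity-and-limit arguments.
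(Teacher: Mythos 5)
Your proof is correct and follows essentially the same route as the paper's: establish the inclusion $\Gamma \supset \C \cap \ell_{\geq t_0}$, from which coboundedness is immediate, and then derive the $\C$-convex region property by a short convexity-and-closure argument (the paper takes the convex hull of $x$ and $(x+\C)\cap\ell_{\geq t_0}$, you take a limit of $(1-1/N)x + c$; these are interchangeable). The only substantive difference is that you spell out, via a density argument and a separate treatment of the boundary case $\ell(c)=t_0$, the key inclusion $\Gamma \supset \C \cap \ell_{\geq t_0}$, which the paper asserts follows directly ``from the definitions.''
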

\begin{proof}
{ Let $\ell$ and $t_0$ be as in Definition \ref{def-primary-sequence}.
From the definitions it follows that the region $\Gamma$ contains $\C \cap \ell_{\geq t_0}$. Thus 
$(\C \setminus \Gamma) \subset (\C \cap \ell_{< t_0})$ and hence is bounded. Next let $x \in \Gamma$. Since $x+\C \subset \C$, 
$\Gamma$ contains the set $(x+\C) \cap \ell_{\geq t_0}$. But the convex hull of $x$ and $(x+\C) \cap \ell_{\geq t_0}$ is $x+\C$. Thus 
$x+\C \subset \Gamma$ because $\Gamma$ is convex.}
\end{proof}

{The following is an important example of a primary graded sequence in a semigroup $\S$. 
\begin{Prop} \label{prop-polyhedral-primary-semigroup-ideal}
Let $\C$ be an $n$-dimensional strongly convex rational polyhedral cone in $\r^n$ and let $\S = \C \cap \z^n$. 
Also let $\I \subset \S$ be a subset such that 
$\S \setminus \I$ is finite. Then the sequence $\I_\bullet$ defined by $\I_k := k * \I$ is a primary graded sequence and $\Gamma(\I_\bullet) = \conv(\I)$.
\end{Prop}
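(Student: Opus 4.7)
The plan is to prove the two assertions separately: first, that $\I_\bullet = (k*\I)_{k\ge 1}$ is a primary graded sequence, and second, that $\Gamma(\I_\bullet) = \conv(\I)$.

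For primality, I invoke Gordan's lemma, which yields a finite set of semigroup generators $h_1, \ldots, h_p$ of $\S = \C \cap \z^n$. Since $\S \setminus \I$ is finite and $\ell$ is strictly positive on $\C \setminus \{0\}$ (so all sublevel sets $\S \cap \ell_{<a}$ are finite), I can choose $T > 0$ with $\S \cap \ell_{\geq T} \subset \I$. I then take $t_0$ to be any integer at least $T + \sum_{j=1}^p \ell(h_j)$, and claim that for $x \in \S$ with $\ell(x) \geq k t_0$ one has $x \in k*\I$. The construction is an equidistribution: writing $x = \sum_j a_j h_j$ with $a_j \in \z_{\geq 0}$, split each coefficient into $k$ non-negative integer parts $q_j^{(1)}, \ldots, q_j^{(k)}$, each in $\{\lfloor a_j/k\rfloor, \lceil a_j/k\rceil\}$ and summing to $a_j$. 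Setting $x_i := \sum_j q_j^{(i)} h_j$ gives $x = x_1 + \cdots + x_k$ with $x_i \in \S$; the bound $q_j^{(i)} \geq a_j/k - 1$ yields
$$\ell(x_i) \;\geq\; \ell(x)/k - \sum_{j=1}^p \ell(h_j) \;\geq\; t_0 - \sum_{j=1}^p \ell(h_j) \;\geq\; T,$$
so each $x_i \in \S \cap \ell_{\geq T} \subset \I$, hence $x \in k*\I = \I_k$. The reverse inclusion $\I_k \cap \ell_{\geq kt_0} \subset \S \cap \ell_{\geq kt_0}$ is immediate.

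For the identification of $\Gamma(\I_\bullet)$ with $\conv(\I)$, I unravel Definition \ref{def-Gamma-I-semigroup}:
$$\Gamma(\I_\bullet) \;=\; \overline{\conv \bigcup_{k \geq 1} \bigl\{(y_1 + \cdots + y_k)/k \,\bigm|\, y_i \in \I\bigr\}}.$$
The inner union is precisely the set of rational convex combinations of elements of $\I$, so its closure equals $\overline{\conv(\I)}$ (real convex combinations are limits of rational ones). Thus $\Gamma(\I_\bullet) = \overline{\conv(\I)}$, and the remaining task is to show that $\conv(\I)$ is already closed. I would establish this via the polyhedral decomposition $\conv(\I) = \conv(\I \cap \ell_{\leq N}) + \C$ for $N$ sufficiently large (exceeding $T$ and $\max_{f \in \S\setminus \I}\ell(f)$ by a margin accounting for the generators $h_j$); then $\conv(\I)$ is the Minkowski sum of a polytope (the finite convex hull of $\I \cap \ell_{\leq N}$) and the closed polyhedral cone $\C$, hence a closed polyhedron.

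The main obstacle is that decomposition. For $\supseteq$, one needs $\conv(\I)$ to be $\C$-stable: for $y \in \I$ and rational $c \in \C$, taking $M$ so that $Mc \in \S$ and $\ell(y + Mc) \geq T$ gives $y + Mc \in \I$, whence $y + c = \tfrac{1}{M}(y + Mc) + \tfrac{M-1}{M}\, y \in \conv(\I)$; the irrational case is handled by a limiting argument that must be organized carefully so as not to be circular. For $\subseteq$, one absorbs any $y_j \in \I$ with $\ell(y_j) > N$ appearing in a convex combination representative by writing $y_j = y_j' + c_j$ with $y_j' \in \conv(\I \cap \ell_{\leq N})$ and $c_j \in \C$, using the abundance of lattice points deep in $\C$ guaranteed by the first part. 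By contrast, the primary property in the first part is essentially automatic once Gordan's lemma and the equidistribution are in hand.
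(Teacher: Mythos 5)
Your argument for the primary property is correct and takes a mildly different route from the paper's. You apply Gordan's lemma directly to $\S$ and split each coefficient $a_j$ of a generator expansion of $x$ into $k$ near-equal nonnegative integer parts, while the paper instead shows that the truncated semigroup $\M_1 = \S \cap \ell_{\geq t_1}$ is itself finitely generated, writes $x = \sum c_i v_i$ in $\M_1$-generators, and uses $\sum c_i \geq k$ to regroup the terms into $k$ elements of $\M_1 \subset \I$. Both give the required $t_0$; your version has the small advantage of not needing finite generation of the truncated object.

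For $\Gamma(\I_\bullet) = \conv(\I)$, you correctly flag a point the paper's one-line justification (namely $\conv(k*\I) = k\,\conv(\I)$) leaves implicit: that identity only yields $\Gamma(\I_\bullet) = \overline{\conv(\I)}$, so one still must argue that $\conv(\I)$ is closed. Your decomposition $\conv(\I) = \conv(\I \cap \ell_{\leq N}) + \C$ is the right mechanism, and your $\subseteq$ is fine: every $y \in \I$ with $\ell(y) > N$ splits as $y = y' + c$ with $y' \in \I \cap \ell_{\leq N}$ and $c \in \S$ (stop a partial sum of generators the first time $\ell$ reaches $T$), so $\I \subset (\I \cap \ell_{\leq N}) + \S$; take convex hulls and use $\conv(\S) = \C$.

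Your $\supseteq$, however, has a genuine gap, which you flag yourself: the interpolation $y + c = \tfrac{1}{M}(y + Mc) + \tfrac{M-1}{M}y$ only covers rational $c \in \C$, and a naive limit over rational approximations would be circular since it presupposes the closedness you are proving. The gap closes without any limits. Since $\C$ is rational polyhedral, pick ray generators $w_1, \ldots, w_q \in \S$. Your rational argument gives $y + w_j \in \conv(\I)$ for every $y \in \I$, hence $\conv(\I) + w_j \subset \conv(\I)$. For $p \in \conv(\I)$ and $t \in [0,1]$, convexity gives $p + t w_j = (1-t)p + t(p + w_j) \in \conv(\I)$; iterating on $\lfloor t \rfloor$ yields $\conv(\I) + t w_j \subset \conv(\I)$ for all $t \geq 0$, and applying this for each $j$ gives $\conv(\I) + \C \subset \conv(\I)$. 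Then $\conv(\I)$ is a polytope plus the closed cone $\C$, hence closed, and the decomposition (and the proposition) follows.
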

\begin{proof}
{Since $\S \setminus \I$ is finite there exists $t_1>0$ such that $\S \cap \ell_{\geq t_1} \subset \I$. Put $\M_1 = \S \cap \ell_{\geq t_1}$. Because 
$\C$ is a rational polyhedral cone, $\M_1$ is a finitely generated semigroup. Let $v_1, \ldots, v_s$ be semigroup generators for $\M_1$. 
Let $t_0 > 0$ be bigger than all the $\ell(v_i)$. For $k>0$ take $x \in \S \cap \ell_{\geq kt_0} \subset \M_1$. Then $x = \sum_{i=1}^s c_i v_i$ for 
$c_i \in \z_{\geq 0}$. Thus $kt_0 \leq \ell(x) = \sum_i c_i \ell(v_i) \leq (\sum_i c_i)t_0$. This implies that $k \leq \sum_i c_i$ and hence 
$(\sum_i c_i) * \M_1 \subset k * \M_1$. It follows that $x \in k*\M_1$. That is, 
$\S \cap \ell_{\geq kt_0} = (k * \M_1) \cap  \ell_{\geq kt_0} \subset (k * \I) \cap \ell_{\geq kt_0}$ and hence $\S \cap \ell_{\geq kt_0} = (k*\I) \cap \ell_{\geq kt_0}$ 
as required. The assertion $\Gamma(\I_\bullet) = \conv(\I_\bullet)$ follows from the observation that $\conv(k * \I) = k ~ \conv(\I)$.}
\end{proof}

The following is our main result about the asymptotic behavior of a primary graded sequence.
\begin{Th} \label{th-vol-Gamma-semigroup}
Let $\I_\bullet$ be a primary graded sequence. 
Then $$\lim_{k \to \infty} \frac{H_{\I_\bullet}(k)}{k^n}$$ exists and is equal 
to $\covol(\Gamma(\I_\bullet))$.
\end{Th}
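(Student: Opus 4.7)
The strategy is to reduce the claim to Theorem \ref{th-semigp-NO} by realizing the Hilbert-Samuel function $H_{\I_\bullet}(k)$ as the difference of two Hilbert functions of auxiliary graded semigroups in $\z^n\times\z_{\geq 0}$. Fix the linear function $\ell$ and the integer $t_0>0$ from Definition \ref{def-primary-sequence}. The primary condition $\I_k\cap\ell_{\geq kt_0}=\S\cap\ell_{\geq kt_0}$ implies that on the closed half-space $\ell\geq kt_0$ the sets $\S$ and $\I_k$ agree, so
$$H_{\I_\bullet}(k) \;=\; \#(\S\cap\ell_{\leq kt_0}) - \#(\I_k\cap\ell_{\leq kt_0}),$$
with both terms finite because $\C\cap\ell_{\leq kt_0}$ is bounded.

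Define the auxiliary semigroups
\begin{align*}
S^{(1)} &= \{(x,k)\in\z^n\times\z_{\geq 0}\,:\, x\in\S,\ \ell(x)\leq kt_0\},\\
S^{(2)} &= \{(x,k)\in\z^n\times\z_{\geq 0}\,:\, x\in\I_k,\ \ell(x)\leq kt_0\}.
\end{align*}
Both are closed under addition: $S^{(1)}$ because $\S$ is a semigroup and $\ell$ is linear, $S^{(2)}$ because $\I_k+\I_m\subset\I_{k+m}$ and $\ell$ is linear. By construction $\#S^{(1)}_k$ and $\#S^{(2)}_k$ are precisely the two counts above, so $H_{\I_\bullet}(k)=\#S^{(1)}_k-\#S^{(2)}_k$. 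A direct unwinding of the definitions identifies the Newton-Okounkov bodies (up to sets of measure zero) as
$$\Delta(S^{(1)})=\C\cap\ell_{\leq t_0},\qquad \Delta(S^{(2)})=\Gamma(\I_\bullet)\cap\ell_{\leq t_0}.$$
The first holds because $\S$ generates $\C$; for the second, any point of $\Gamma(\I_\bullet)$ with $\ell$-value strictly less than $t_0$ is approximable by $x/k$ with $x\in\I_k$, and on an open neighborhood of such a point the bound $\ell(x)\leq kt_0$ is automatic.

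Apply Theorem \ref{th-semigp-NO} to $S^{(1)}$ and $S^{(2)}$ (after the standard lattice check, using that $\S$ generates $\z^n$, that $(0,1)$ lies in both semigroups, and that cofiniteness of $\I_k$ in $\S$ from Proposition \ref{prop-I_k-cofinite} provides enough differences to span $\z^n\times\{0\}$) to obtain
$$\lim_k\frac{\#S^{(1)}_k}{k^n}=\vol(\C\cap\ell_{\leq t_0}),\quad \lim_k\frac{\#S^{(2)}_k}{k^n}=\vol(\Gamma(\I_\bullet)\cap\ell_{\leq t_0}).$$
Since $\Gamma(\I_\bullet)\supset\C\cap\ell_{\geq t_0}$, the coconvex body $\C\setminus\Gamma(\I_\bullet)$ lies entirely in $\ell_{<t_0}$, so subtracting the two limits gives
$$\lim_k\frac{H_{\I_\bullet}(k)}{k^n}\;=\;\vol(\C\setminus\Gamma(\I_\bullet))\;=\;\covol(\Gamma(\I_\bullet)),$$
which is the desired formula.

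The principal technical point is the identification $\Delta(S^{(2)})=\Gamma(\I_\bullet)\cap\ell_{\leq t_0}$: one must verify that imposing the cut $\ell(x)\leq kt_0$ does not destroy any accumulation points of $\bigcup_k\{x/k:x\in\I_k\}$ lying on or below the hyperplane $\ell=t_0$. On the open side $\ell<t_0$ this is a direct continuity argument, and the boundary $\ell=t_0$ contributes measure zero. The lattice hypothesis in Theorem \ref{th-semigp-NO} is a routine check (or can be absorbed into a subgroup-index correction), and the degenerate case in which $\Gamma(\I_\bullet)\cap\ell_{\leq t_0}$ has empty interior is handled separately by observing that both sides of the asserted identity then vanish.
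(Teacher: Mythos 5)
Your proof is essentially the paper's own: you form the two auxiliary semigroups at levels $\{(x,k):\ell(x)\leq kt_0\}$ (the paper uses the strict inequality $\ell(x)<kt_0$, an immaterial difference), identify their Newton--Okounkov bodies as $\C\cap\ell_{\leq t_0}$ and $\Gamma(\I_\bullet)\cap\ell_{\leq t_0}$, apply Theorem \ref{th-semigp-NO} to each, and subtract. One small slip: $(0,1)$ need not lie in $S^{(2)}$ since $0\in\I_1$ is not guaranteed (e.g. when $\I_1=\I(\a_1)$ for a proper ideal $\a_1$), but this does not affect the argument because, as the paper does, one can enlarge $t_0$ so that $\I_1\cap\ell_{\leq t_0}$ already generates $\z^n$, which secures the lattice hypothesis for both semigroups.
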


\begin{proof}
Let $t_0 > 0$ be as in Definition \ref{def-primary-sequence}. Then for all  $k>0$ we have
$\S \cap \ell_{\geq kt_0} = \I_k \cap \ell_{\geq kt_0}.$
{ Moreover take $t_0$ to be large enough so that the finite set $\I_1 \cap \ell_{< t_0}$ 
generates the lattice $\z^n$ (this is possible because $\S$ and hence $\I_1$ generate $\z^n$).}
Consider 
$$\tilde{S} = \{(x, k) \mid x \in \I_k \cap \ell_{< kt_0} \}.$$
$$\tilde{T} = \{(x, k) \mid x \in \S \cap \ell_{< kt_0} \}.$$
$\tilde{S}$ and $\tilde{T}$ are semigroups in $\z^n \times \z_{\geq 0}$ and we have $\tilde{S} \subset \tilde{T}$. 
From the definition it follows that both of the groups generated by $\tilde{S}$ and $\tilde{T}$ are $\z^{n+1}$. 
Also the Newton-Okounkov bodies of $\tilde{S}$ and $\tilde{T}$ are: 
$$\Delta(\tilde{S}) = \Gamma(\I_\bullet) \cap \Delta(t_0),$$ 
$$\Delta(\tilde{T}) = \Delta(t_0),$$
where $\Delta(t_0) = \C \cap \ell_{\leq t_0}$. 
Since $\S \cap \ell_{\geq kt_0} = \I_k \cap \ell_{\geq kt_0}$, we have:
$$\S \setminus \I_k = \tilde{T}_k \setminus \tilde{S}_k,$$
Here as usual $\tilde{S}_k = \{(x, k) \mid (x, k) \in \tilde{S}\}$ (respectively $\tilde{T}_k$) denotes the set of points in $\tilde{S}$ (respectively $\tilde{T}$) at level $k$.
Hence $$H_{\I_\bullet}(k) = \#\tilde{T}_k - \#\tilde{S}_k.$$
By Theorem \ref{th-semigp-NO} we have:
$$\lim_{k \to \infty} \frac{\#\tilde{S}_k}{k^n} = \vol(\Delta(\tilde{S})),$$
$$\lim_{k \to \infty} \frac{\#\tilde{T}_k}{k^n} = \vol(\Delta(t_0)).$$
Thus $$\lim_{k \to \infty} \frac{\#(\S \setminus \I_k)}{k^n} = \vol(\Delta(t_0)) - \vol(\Delta(\tilde{S})).$$
On the other hand, we have:
$$\Delta(t_0) \setminus \Delta(\tilde{S}) = \C \setminus \Gamma(\I_\bullet),$$ and hence  
$\vol(\Delta(t_0)) - \vol(\Delta(\tilde{S})) = \covol(\Gamma(\I_\bullet)).$ This finishes the proof.
\end{proof}

\begin{Def} \label{def-multiplicity-semigp-ideal}
For a primary graded sequence $\I_\bullet$ we denote $n!~\lim_{k \to \infty} H_{\I_\bullet}(k) / k^n$ by
$e(\I_\bullet)$. Motivated by commutative algebra, we call it the {\it multiplicity of $\I_\bullet$}. 
We have just proved that $e(\I_\bullet) = n!~\covol(\Gamma(I_\bullet))$. Note that it is possible for a primary graded
sequence to have multiplicity equal to zero. 
\end{Def}

The following additivity property is straightforward from definition.
\begin{Prop} \label{prop-additivity-semigroup-ideal}
Let $\I_\bullet'$, $\I_\bullet''$ be primary graded sequences. We have:
$$\Gamma(\I_\bullet') + \Gamma(\I_\bullet'') = \Gamma(\I_\bullet' + \I_\bullet'').$$
\end{Prop}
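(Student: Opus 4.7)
The plan is to prove the equality of two closed convex sets by establishing both inclusions separately, using the observation that graded sequences satisfy $t * \I_k \subseteq \I_{tk}$ for every positive integer $t$ (which follows by iterating the defining inclusion $\I_a + \I_b \subseteq \I_{a+b}$).

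For the inclusion $\Gamma(\I'_\bullet + \I''_\bullet) \subseteq \Gamma(\I'_\bullet) + \Gamma(\I''_\bullet)$, I would first note that every generating point of $\Gamma(\I'_\bullet + \I''_\bullet)$ is of the form $z/k$ with $z \in \I'_k + \I''_k$, hence writes as $z/k = x/k + y/k$ with $x \in \I'_k$, $y \in \I''_k$; thus the union $\bigcup_k\{z/k : z \in \I'_k + \I''_k\}$ is contained set-theoretically in $\Gamma(\I'_\bullet) + \Gamma(\I''_\bullet)$. The latter set is automatically convex. The key technical observation is that it is \emph{closed}: if $a_n + b_n \to c$ with $a_n \in \Gamma(\I'_\bullet) \subseteq \C$ and $b_n \in \Gamma(\I''_\bullet) \subseteq \C$, then $\ell(a_n) + \ell(b_n) = \ell(a_n + b_n)$ is bounded, and since $\ell \geq 0$ on $\C$ both $\ell(a_n)$ and $\ell(b_n)$ are bounded; because $\ell^{-1}(0) \cap \C = \{0\}$, the set $\C \cap \ell_{\leq M}$ is compact, so passing to subsequences yields $a_n \to a \in \Gamma(\I'_\bullet)$, $b_n \to b \in \Gamma(\I''_\bullet)$ with $c = a + b$. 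Being a closed convex set containing the generating union, $\Gamma(\I'_\bullet) + \Gamma(\I''_\bullet)$ contains the closure of the convex hull, which is precisely $\Gamma(\I'_\bullet + \I''_\bullet)$.

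For the reverse inclusion, I would first check it on generating points: take $x/k \in \Gamma(\I'_\bullet)$ with $x \in \I'_k$ and $y/m \in \Gamma(\I''_\bullet)$ with $y \in \I''_m$. Using $t * \I_k \subseteq \I_{tk}$, rewrite $x/k = mx/(mk)$ with $mx \in \I'_{mk}$ and $y/m = ky/(mk)$ with $ky \in \I''_{mk}$. Then $x/k + y/m = (mx + ky)/(mk)$ lies in $\Gamma(\I'_\bullet + \I''_\bullet)$ since $mx + ky \in \I'_{mk} + \I''_{mk} = (\I'_\bullet + \I''_\bullet)_{mk}$. For arbitrary $a \in \Gamma(\I'_\bullet)$ and $b \in \Gamma(\I''_\bullet)$, write $a$ as a limit of convex combinations $\sum_i \lambda_i (x_i/k_i)$ and $b$ as $\sum_j \mu_j(y_j/m_j)$; then their sum rearranges as $\sum_{i,j} \lambda_i \mu_j (x_i/k_i + y_j/m_j)$, a convex combination of points already shown to lie in $\Gamma(\I'_\bullet + \I''_\bullet)$. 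Taking the limit, using closedness of $\Gamma(\I'_\bullet + \I''_\bullet)$, finishes the inclusion.

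The only non-trivial step is the closedness of the Minkowski sum of two cobounded $\C$-convex regions. It is not automatic for arbitrary closed convex sets, but the strong convexity of $\C$ together with the coboundedness gives the compactness argument sketched above. Once this is in hand, the rest is bookkeeping with the scaling property $t * \I_k \subseteq \I_{tk}$ of graded sequences.
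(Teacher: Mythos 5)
Your proof is correct and rests on the same basic observation as the paper's one-line argument (namely, that the Minkowski sum of the convex hulls of the level-$k$ generating sets equals the convex hull of the Minkowski sum, after matching denominators via $t*\I_k \subseteq \I_{tk}$). The paper's proof simply states the level-$k$ identity $\conv(\I'_k) + \conv(\I''_k) = \conv(\I'_k + \I''_k)$ and concludes with ``From this the required equality follows,'' leaving the closure bookkeeping implicit. Your version supplies the genuinely non-trivial missing detail: that $\Gamma(\I'_\bullet) + \Gamma(\I''_\bullet)$ is already closed, which is not automatic for Minkowski sums of unbounded closed convex sets (e.g.\ the sum of the hyperbola region $\{xy \geq 1, x>0\}$ and a half-plane need not be closed). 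Your compactness argument --- bounding $\ell(a_n)$ and $\ell(b_n)$ separately using $\ell \geq 0$ on $\C$ and $\ell^{-1}(0) \cap \C = \{0\}$, then extracting convergent subsequences --- is exactly the right tool and uses the strong convexity of $\C$ in an essential way. Without this step, the inclusion $\Gamma(\I'_\bullet + \I''_\bullet) \subseteq \Gamma(\I'_\bullet) + \Gamma(\I''_\bullet)$ would only land in the closure of the right-hand side, not the right-hand side itself. So the proposal is not just correct; it is a cleaner and more complete version of the paper's argument.
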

\begin{proof}
From definition it is clear that $\Gamma(\I_\bullet'+\I_\bullet'') \subset \Gamma(\I_\bullet') + \Gamma(\I_\bullet'')$. We need to show the reverse inclusion. 
Let $\I_\bullet$ denote $\I_\bullet' + \I_\bullet''$.
For $k, m >0$ take $x \in \I_k'$ and $y \in \I_m''$. Then $(x/k) + (y/m) = (m x + ky)/km \in (1/km) \I_{km}$. This shows that $(x/k) + (y/m) \in \Gamma(\I_\bullet)$
which finishes the proof.
\end{proof}

Let $\I_{1, \bullet}, \ldots, \I_{n, \bullet}$ be $n$ primary graded sequences. 
Define the function $P_{\I_{1, \bullet}, \ldots, \I_{n, \bullet}}: \n^n \to \n$ by: $$P_{\I_{1, \bullet}, \ldots, \I_{n, \bullet}}(k_1, \ldots, k_n) = 
e(k_1*\I_{1, \bullet} + \cdots + k_n*\I_{n, \bullet}).$$
\begin{Th} \label{th-mixed-multi-poly-semigp}
The function $P_{\I_{1, \bullet}, \ldots, \I_{n, \bullet}}$ is a homogeneous polynomial of degree $n$ in $k_1, \ldots, k_n$.
\end{Th}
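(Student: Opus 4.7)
The plan is to reduce everything to Theorem~\ref{th-covol-polynomial} (polynomiality of covolume as a function of Minkowski coefficients) via the dictionary between primary graded sequences and $\C$-convex regions established by Theorem~\ref{th-vol-Gamma-semigroup}. The key identity I want to extract is
$$\Gamma(k_1 * \I_{1, \bullet} + \cdots + k_n * \I_{n, \bullet}) = k_1 \Gamma(\I_{1, \bullet}) + \cdots + k_n \Gamma(\I_{n, \bullet}),$$
so that
$$P_{\I_{1, \bullet}, \ldots, \I_{n, \bullet}}(k_1, \ldots, k_n) = \covol\bigl(k_1 \Gamma(\I_{1, \bullet}) + \cdots + k_n \Gamma(\I_{n, \bullet})\bigr),$$
and Theorem~\ref{th-covol-polynomial} immediately gives the desired polynomiality of degree $n$.

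To justify the displayed identity, I would proceed in two small steps, both by iterated application of Proposition~\ref{prop-additivity-semigroup-ideal}. First, since $k_1 * \I_{1, \bullet} + \cdots + k_n * \I_{n, \bullet}$ is the (finite) sum of $k_1 + \cdots + k_n$ primary graded sequences (namely $k_i$ copies of $\I_{i, \bullet}$), applying Proposition~\ref{prop-additivity-semigroup-ideal} repeatedly yields
$$\Gamma(k_1 * \I_{1, \bullet} + \cdots + k_n * \I_{n, \bullet}) = \sum_{i=1}^n \underbrace{\Gamma(\I_{i, \bullet}) + \cdots + \Gamma(\I_{i, \bullet})}_{k_i \text{ times}} = \sum_{i=1}^n k_i \Gamma(\I_{i, \bullet}),$$
the last equality being the usual identification of the $k_i$-fold Minkowski self-sum of a convex set with its $k_i$-dilate.

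A preliminary verification I would need is that sums and scalar multiples $k * \I_\bullet$ of primary graded sequences are themselves primary, so that the multiplicity $e(\cdot)$ is defined and Theorem~\ref{th-vol-Gamma-semigroup} applies to the combination. Fixing a common linear functional $\ell$ as in Definition~\ref{def-primary-sequence} and using constants $t_0^{(i)}$ for the $\I_{i, \bullet}$, one picks a sufficiently large $u_0$ (depending on $\max_i t_0^{(i)}$ and on semigroup generators of $\S \cap \ell_{\geq t}$ for large $t$, in the spirit of the argument in Proposition~\ref{prop-polyhedral-primary-semigroup-ideal}) to show that $(\I_{1,m} + \cdots + \I_{n,m}) \cap \ell_{\geq m u_0} = \S \cap \ell_{\geq m u_0}$. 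I expect this bookkeeping to be the only genuinely technical point; once it is in place, the rest of the argument is formal.

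With these pieces assembled, the proof collapses to a one-line chain: Theorem~\ref{th-vol-Gamma-semigroup} converts $e$ into $\covol$, Proposition~\ref{prop-additivity-semigroup-ideal} converts the sum of sequences into a Minkowski sum of convex regions, and Theorem~\ref{th-covol-polynomial} certifies that the resulting function of $(k_1, \ldots, k_n)$ is a homogeneous polynomial of degree $n$. Since any function $\n^n \to \r$ agreeing with a homogeneous polynomial on the positive integer lattice is itself (the restriction of) that polynomial, the conclusion follows.
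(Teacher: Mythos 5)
Your argument reproduces the paper's own proof: the paper dispatches this theorem in one line by citing Proposition~\ref{prop-additivity-semigroup-ideal}, Theorem~\ref{th-vol-Gamma-semigroup}, and Theorem~\ref{th-covol-polynomial}, which is exactly the chain you lay out, and your iterated application of Proposition~\ref{prop-additivity-semigroup-ideal} to obtain $\Gamma(k_1*\I_{1,\bullet}+\cdots+k_n*\I_{n,\bullet}) = k_1\Gamma(\I_{1,\bullet})+\cdots+k_n\Gamma(\I_{n,\bullet})$ is the intended reading. You are also right to single out, as the one genuinely substantive step, the need to verify that $k_1*\I_{1,\bullet}+\cdots+k_n*\I_{n,\bullet}$ is again a \emph{primary} graded sequence, so that Theorem~\ref{th-vol-Gamma-semigroup} applies to it; the paper's ``Follows immediately'' quietly presupposes this.

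The weak point is your sketch of that verification. You propose to use semigroup generators of $\S\cap\ell_{\geq t}$ in the spirit of Proposition~\ref{prop-polyhedral-primary-semigroup-ideal}; but that proposition's proof uses that $\C$ is rational polyhedral, so that $\S\cap\ell_{\geq t}$ is finitely generated (Gordan's lemma). In the generality of Section~\ref{sec-semigp-ideal}, $\S$ is only assumed to generate $\z^n$ with $C(\S)$ strongly convex, and $\S\cap\ell_{\geq t}$ need not be finitely generated. Indeed the statement you need --- that every $s\in\S$ with $\ell(s)\geq mu_0$ splits as a sum of $m$ elements of $\S$ each of $\ell$-value at least $t_0$ --- can fail: take $\S=\{0\}\cup\{(a,b)\in\z^2: a\geq 1,\ 0\leq b\leq\lfloor a\sqrt{2}\rfloor\}$ and $\ell(a,b)=a$; by the best-approximation property of the convergents $p/q$ of $\sqrt{2}$ with $q\sqrt{2}-p>0$, the points $(q,p)\in\S$ admit no nontrivial decomposition in $\S$ at all, so already $\S\setminus(\I_k+\I_k)$ is infinite when $\I_k=\S\cap\ell_{\geq k}$. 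So closing this gap requires either a hypothesis guaranteeing finite generation (as in the polyhedral setting of Section~\ref{sec-monomial-ideal}, where the theorem is actually invoked) or an argument of a different nature. Your instinct to flag this as the one nontrivial bookkeeping point is correct --- and it is a point the paper's one-line proof also leaves unaddressed --- but the route you indicate does not close it at the stated level of generality.
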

\begin{proof}
Follows immediately from Proposition \ref{prop-additivity-semigroup-ideal}, 
Theorem \ref{th-vol-Gamma-semigroup} and Theorem \ref{th-covol-polynomial}.
\end{proof}

\begin{Def} \label{def-mixed-multi-semigroup}
{Let $\I_{1, \bullet}, \ldots, \I_{n, \bullet}$ be primary graded sequences.
Define the {\it mixed multiplicity} $e(\I_{1, \bullet}, \ldots, \I_{n, \bullet})$ to be the coefficient of $k_1 \cdots k_n$ in 
the polynomial $P_{\I_{1, \bullet}, \ldots, \I_{n, \bullet}}$ divided by $n!$.}
\end{Def}



From Theorem \ref{th-vol-Gamma-semigroup} and Proposition \ref{prop-additivity-semigroup-ideal} we have the 
following corollary:
\begin{Cor}
$$e(\I_{1, \bullet}, \ldots, \I_{n, \bullet}) = n!~CV(\Gamma(\I_{1, \bullet}), \ldots, \Gamma(\I_{n, \bullet})),$$
where as before $CV$ denotes the mixed covolume of cobounded regions.
\end{Cor}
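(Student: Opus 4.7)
The plan is to exhibit both sides as the coefficient (in their respective conventions) of the monomial $k_1\cdots k_n$ in one and the same polynomial identity, so that the equality is obtained by matching coefficients. The key input is that Theorem \ref{th-vol-Gamma-semigroup} together with Proposition \ref{prop-additivity-semigroup-ideal} lets us rewrite the polynomial $P_{\I_{1,\bullet},\ldots,\I_{n,\bullet}}$ explicitly as the covolume polynomial of the regions $\Gamma(\I_{i,\bullet})$ from Theorem \ref{th-covol-polynomial}.

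First I would establish the scaling identity $\Gamma(k*\I_\bullet)=k\,\Gamma(\I_\bullet)$ for every positive integer $k$ and every primary graded sequence $\I_\bullet$. Since $k*\I_\bullet$ is the $k$-fold sum $\I_\bullet+\cdots+\I_\bullet$ of graded sequences (level by level), this follows by iterating Proposition \ref{prop-additivity-semigroup-ideal}. Applying Proposition \ref{prop-additivity-semigroup-ideal} once more to a sum of such rescaled sequences yields
$$\Gamma(k_1*\I_{1,\bullet}+\cdots+k_n*\I_{n,\bullet})
= k_1\Gamma(\I_{1,\bullet})+\cdots+k_n\Gamma(\I_{n,\bullet})$$
for every $(k_1,\ldots,k_n)\in\n^n$. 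Inserting this into Theorem \ref{th-vol-Gamma-semigroup} (applied to the primary graded sequence on the left) gives
$$P_{\I_{1,\bullet},\ldots,\I_{n,\bullet}}(k_1,\ldots,k_n)
= \covol\bigl(k_1\Gamma(\I_{1,\bullet})+\cdots+k_n\Gamma(\I_{n,\bullet})\bigr)
= P_{\Gamma(\I_{1,\bullet}),\ldots,\Gamma(\I_{n,\bullet})}(k_1,\ldots,k_n),$$
where the last equality is the definition of the covolume polynomial. Both sides are genuine polynomials in $k_1,\ldots,k_n$, the left by Theorem \ref{th-mixed-multi-poly-semigp} and the right by Theorem \ref{th-covol-polynomial}, and they agree on all of $\n^n$, hence they coincide as polynomials.

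Finally, reading off the coefficient of the monomial $k_1\cdots k_n$ and comparing with Definition \ref{def-mixed-multi-semigroup} on the left and the definition of mixed covolume following Theorem \ref{th-covol-polynomial} on the right yields the stated identity. There is no real obstacle: once the polynomial identity above is in hand, the corollary is purely a matter of reading off and matching coefficients, i.e.\ it is the compatibility of Theorem \ref{th-vol-Gamma-semigroup} with the polarization procedure that defines mixed multiplicities and mixed covolumes on each side.
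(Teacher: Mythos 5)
Your argument is correct and is precisely the one the paper intends; the paper's ``proof'' simply cites Theorem \ref{th-vol-Gamma-semigroup} and Proposition \ref{prop-additivity-semigroup-ideal}, and you have usefully filled in the two intermediate steps (the scaling $\Gamma(k*\I_\bullet)=k\,\Gamma(\I_\bullet)$, obtained by iterating additivity, and the resulting equality $P_{\I_{1,\bullet},\ldots,\I_{n,\bullet}}=P_{\Gamma(\I_{1,\bullet}),\ldots,\Gamma(\I_{n,\bullet})}$ of covolume polynomials). However, the very last step of your proposal is off: that polynomial identity does not ``yield the stated identity'' as you claim. Definition \ref{def-mixed-multi-semigroup} and the definition of $CV$ following Theorem \ref{th-covol-polynomial} both take the coefficient of $k_1\cdots k_n$ and divide it by $n!$, so equating coefficients of the two (identical) polynomials gives
$$ e(\I_{1, \bullet}, \ldots, \I_{n, \bullet}) = CV\bigl(\Gamma(\I_{1, \bullet}), \ldots, \Gamma(\I_{n, \bullet})\bigr) $$
with no factor of $n!$.

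In fact the $n!$ in the Corollary as printed is a slip in the paper, which your computation, done correctly, should have exposed. A quick diagonal check makes it plain: Definition \ref{def-multiplicity-semigp-ideal} sets $e(\I_\bullet)=\lim_k H_{\I_\bullet}(k)/k^n$ with no $n!$ (in contrast to Definition \ref{def-multiplicity} for local rings), so Theorem \ref{th-vol-Gamma-semigroup} gives $e(\I_\bullet,\ldots,\I_\bullet)=\covol(\Gamma(\I_\bullet))=CV(\Gamma(\I_\bullet),\ldots,\Gamma(\I_\bullet))$, whereas $n!\,CV$ evaluated on the diagonal would be $n!\,\covol(\Gamma(\I_\bullet))$. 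The extra $n!$ was evidently carried over from the local-ring version (Theorem \ref{th-mixed-multi-mixed-covol-monomial}), where $e(\a)$ is defined with an $n!$. So: your underlying computation is right, but rather than asserting that reading off coefficients ``yields the stated identity,'' you should have flagged the normalization mismatch and corrected the statement.
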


From Theorem \ref{thm-alexandrov-fenchel-covolume} and Corollary \ref{cor-Brunn-Mink-covol} we then obtain:

\begin{Cor}[Alexandrov-Fenchel inequality for mixed multiplicity in semigroups] \label{cor-AF-semigroup}
For primary graded sequences $\I_{1, \bullet}, \ldots, \I_{n, \bullet}$ in the semigroup $\S$ we have:
$$ e(\I_{1, \bullet}, \I_{1, \bullet}, \I_{3, \bullet}, \ldots, \I_{n, \bullet}) e(\I_{2, \bullet}, \I_{2, \bullet}, \I_{3, \bullet}, \ldots, \I_{n, \bullet}) 
\geq e(\I_{1, \bullet}, \I_{2, \bullet}, \I_{3, \bullet}, \ldots, \I_{n, \bullet})^2.$$
\end{Cor}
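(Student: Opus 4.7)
The plan is to reduce the inequality for mixed multiplicities in $\S$ directly to the Alexandrov--Fenchel inequality for mixed covolumes (Theorem \ref{thm-alexandrov-fenchel-covolume}), using the dictionary established in the preceding Corollary, namely
\[
e(\I_{1,\bullet}, \ldots, \I_{n,\bullet}) = n!~CV(\Gamma(\I_{1,\bullet}), \ldots, \Gamma(\I_{n,\bullet})).
\]
The point is that each $\Gamma(\I_{i,\bullet})$ is a cobounded $\C$-convex region (this was proved in the proposition following Definition \ref{def-Gamma-I-semigroup}), so Theorem \ref{thm-alexandrov-fenchel-covolume} applies to the $n$-tuple $(\Gamma(\I_{1,\bullet}), \ldots, \Gamma(\I_{n,\bullet}))$.

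First I would simply set $\Gamma_i := \Gamma(\I_{i,\bullet})$ for $i = 1, \ldots, n$, and note that by Proposition \ref{prop-additivity-semigroup-ideal} combined with multi-linearity of mixed covolume, the identification $e(\cdots) = n!~CV(\cdots)$ is indeed the polarization of $e$ on primary graded sequences. Applying Theorem \ref{thm-alexandrov-fenchel-covolume} to the $\Gamma_i$ gives
\[
CV(\Gamma_1, \Gamma_1, \Gamma_3, \ldots, \Gamma_n) \cdot CV(\Gamma_2, \Gamma_2, \Gamma_3, \ldots, \Gamma_n) \geq CV(\Gamma_1, \Gamma_2, \Gamma_3, \ldots, \Gamma_n)^2.
\]
Multiplying both sides by $(n!)^2$ and substituting mixed multiplicities for mixed covolumes via the preceding Corollary yields precisely the desired inequality.

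There is essentially no obstacle here: all the real work has already been done in Theorem \ref{th-vol-Gamma-semigroup} (which expresses multiplicities as covolumes) and in the covolume Alexandrov--Fenchel inequality of \cite{Askold-Vladlen}. The only thing to double-check is that the $\Gamma_i$ are genuinely cobounded $\C$-convex regions so that Theorem \ref{thm-alexandrov-fenchel-covolume} is applicable, and this is exactly the content of the proposition immediately following Definition \ref{def-Gamma-I-semigroup}. Thus the statement is a one-line corollary once the identification with mixed covolumes is in hand.
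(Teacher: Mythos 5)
Your argument is exactly the paper's: translate each $e(\cdots)$ into $n!\,CV(\Gamma(\cdots))$ via the preceding corollary, check that the $\Gamma(\I_{i,\bullet})$ are cobounded $\C$-convex regions, and invoke the Alexandrov--Fenchel inequality for mixed covolumes (Theorem \ref{thm-alexandrov-fenchel-covolume}). The paper treats this as immediate from those two ingredients, and your write-up is a correct, slightly more explicit version of the same one-line reduction.
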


\begin{Cor}[Brunn-Minkowski inequality for multiplicities in semigroups]
{ Let $\I_\bullet$, $\J_\bullet$ be primary graded sequences in the semigroup $\S$. We have: 
$$e(\I_\bullet)^{1/n} + e(\J_\bullet)^{1/n} \geq e(\I_\bullet + \J_\bullet)^{1/n}.$$}
\end{Cor}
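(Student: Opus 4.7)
The plan is to prove the Brunn--Minkowski inequality for multiplicities as a direct consequence of the three main ingredients already established: the identification of multiplicity with covolume, the additivity of the convex-region construction, and the Brunn--Minkowski inequality for covolumes.

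First, I would note that by Theorem \ref{th-vol-Gamma-semigroup} (and Definition \ref{def-multiplicity-semigp-ideal}) the multiplicity of any primary graded sequence $\I_\bullet$ equals the covolume of the associated convex region, that is $e(\I_\bullet) = \covol(\Gamma(\I_\bullet))$, and similarly for $\J_\bullet$ and for the sum $\I_\bullet + \J_\bullet$. Since both $\Gamma(\I_\bullet)$ and $\Gamma(\J_\bullet)$ are cobounded $\C$-convex regions, so is their Minkowski sum, so all three covolumes appearing are finite and the relevant inequality for covolumes applies.

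Next, I would apply Proposition \ref{prop-additivity-semigroup-ideal}, which gives the key identity
$$\Gamma(\I_\bullet + \J_\bullet) = \Gamma(\I_\bullet) + \Gamma(\J_\bullet).$$
Combining this with the Brunn--Minkowski inequality for covolumes (Corollary \ref{cor-Brunn-Mink-covol}) applied to the two cobounded $\C$-convex regions $\Gamma(\I_\bullet)$ and $\Gamma(\J_\bullet)$, one obtains
$$\covol^{1/n}(\Gamma(\I_\bullet + \J_\bullet)) = \covol^{1/n}(\Gamma(\I_\bullet) + \Gamma(\J_\bullet)) \leq \covol^{1/n}(\Gamma(\I_\bullet)) + \covol^{1/n}(\Gamma(\J_\bullet)).$$
Re-substituting the identification $e = \covol$ on both sides immediately yields the desired inequality $e^{1/n}(\I_\bullet + \J_\bullet) \leq e^{1/n}(\I_\bullet) + e^{1/n}(\J_\bullet)$.

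There is essentially no hard step here: all the analytic content has been pushed into Theorem \ref{th-vol-Gamma-semigroup} (the existence of the limit and its identification with a covolume) and into the Brunn--Minkowski inequality for covolumes (Corollary \ref{cor-Brunn-Mink-covol}), both of which may be invoked as black boxes at this point. The only thing to be careful about is that one is indeed in a setting where Corollary \ref{cor-Brunn-Mink-covol} applies; for that one just needs the three regions to be cobounded, which follows from the proposition that $\Gamma(\I_\bullet)$ and $\Gamma(\J_\bullet)$ are cobounded $\C$-convex regions together with the obvious fact that the Minkowski sum of two cobounded $\C$-convex regions is again a cobounded $\C$-convex region.
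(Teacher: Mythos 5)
Your proof is correct and follows exactly the paper's route: identify multiplicity with covolume via Theorem \ref{th-vol-Gamma-semigroup} and Definition \ref{def-multiplicity-semigp-ideal}, use the additivity of $\Gamma$ from Proposition \ref{prop-additivity-semigroup-ideal}, and then apply the Brunn--Minkowski inequality for covolumes (Corollary \ref{cor-Brunn-Mink-covol}). The paper states this corollary without a written proof, presenting it as an immediate consequence of the same three ingredients you cite, so your argument supplies precisely the intended reasoning.
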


\section{Multiplicities of ideals and subspaces in local rings} \label{sec-multi-ideal}
Let $R$ be a Noetherian local domain of Krull dimension $n$ over a field $\k$, and with maximal ideal $\m$. 
We also assume that the residue field $R/\m$ is $\k$.

\begin{Ex} \label{ex-local-ring-of-subvariety}
Let $X$ be an irreducible variety of dimension $n$ over $\k$, and let $p$ be a point in $X$. Then the 
local ring $R = \mathcal{O}_{X, p}$ (consisting
of rational functions on $X$ which are regular in a neighborhood of $p$) is a Noetherian local domain of Krull dimension $n$ over $\k$.
The ideal $\m$ consists of functions which vanish at $p$.
\end{Ex}

{

If $\a, \b \subset R$ are two $\k$-subspaces then by $\a\b$ we denote the $\k$-span of all the $xy$ where $x \in \a$ and $y \in \b$.
Note that if $\a, \b$ are ideals in $R$ then $\a\b$ coincides with the product of $\a$ and $\b$ as ideals.

\begin{Def}\label{def-graded-seq-subspace}
\begin{itemize}
\item[(i)] A $\k$-subspace $\a$ in $R$ is called {\it $\m$-primary} if it contains a power of the maximal ideal $\m$.
\item[(ii)] A {\it graded sequence of subspaces} is a sequence $\a_\bullet = (\a_1, \a_2, \ldots)$ of $\k$-subspaces in $R$ such that for all $k, m>0$ we have $\a_k \a_m \subset \a_{k+m}$. We call $\a_\bullet$ an {\it $\m$-primary sequence} if moreover $\a_1$ is $\m$-primary. It then follows that every $\a_k$ is $\m$-primary and hence $\dim_{\k}(R/\a_k)$ is finite. (If each $\a_k$ is an $\m$-primary ideal in $R$ we call 
$\a_\bullet$ an {\it $\m$-primary graded sequence of ideals}.) 
\end{itemize}
\end{Def}
}

When $\k$ is algebraically closed, 
an ideal $\a$ in $R = \mathcal{O}_{X, p}$ is $\m$-primary if the subvariety it defines around $p$ coincides with the single 
point $p$ itself.

\begin{Ex} \label{ex-powers-of-semigp-ideal}
Let $\a$ be an $\m$-primary subspace. Then the sequence $\a_\bullet$ defined by $\a_k = \a^k$ is an $\m$-primary graded sequence of subspaces.  
\end{Ex}

Let $\a_\bullet$, $\b_\bullet$ be $\m$-primary graded sequences of subspaces. Then the sequence 
$\mathfrak{c}_\bullet = \a_\bullet \b_\bullet$ defined by $$\mathfrak{c}_k = \a_k \b_k,$$
is also an $\m$-primary graded sequence of subspaces which we call the {\it product of $\a_\bullet$ and $\b_\bullet$}.

\begin{Def} \label{def-Hilbert-Samuel-semigroup}
Let $\a_\bullet$ be an $\m$-primary graded sequence of subspaces.
Define the function $H_{\a_\bullet}$ by: $$H_{\a_\bullet}(k) = \dim_\k(R / \a_k).$$ We call it the 
{\it Hilbert-Samuel function of $\a_\bullet$}.
The {\it Hilbert-Samuel function} $H_\a(k)$ of an $\m$-primary subspace $\a$ is the Hilbert-Samuel function of the 
sequence $\a_\bullet = (\a, \a^2, \ldots)$. That is, $H_\a(k) = \dim_\k(R / \a^k)$.
\end{Def}

\begin{Rem} \label{rem-Hilbert-Samuel-poly}
For an $\m$-primary ideal $\a$ it is well-known that, for sufficiently large values of $k$, the Hilbert-Samuel function $H_\a$ coincides with a polynomial of degree $n$ 
called the {\it Hilbert-Samuel polynomial of $\a$}
(\cite{SZ}).
\end{Rem}

\begin{Def} \label{def-multiplicity}
Let $\a_\bullet$ be an $\m$-primary graded sequence of subspaces.
We define the {\it multiplicity $e(\a_\bullet)$} to be:
$$e(\a_\bullet) = n!~\lim_{k \to \infty} \frac{H_{\a_\bullet}(k)}{k^n}.$$
(It is not a priori clear that the limit exists.)
The multiplicity $e(\a)$ of an $\m$-primary ideal $\a$ is the multiplicity of its associated sequence $(\a, \a^2, \ldots)$. 
That is:
$$e(\a) = n!~\lim_{k \to \infty} \frac{H_{\a}(k)}{k^n}.$$
{ (Note that by Remark \ref{rem-Hilbert-Samuel-poly} the limit exists in this case.)}
\end{Def}

{
The notion of multiplicity comes from the following basic example:
\begin{Ex} \label{ex-meaning-multiplicity}
Let $\a$ be an $\m$-primary subspace in the 
local ring $R = \mathcal{O}_{X, p}$ of a { point} $p$ in an irreducible variety $X$ over an algebraically closed filed $\k$.
Let $f_1, \ldots, f_n$ be generic elements in $\a$.
Then the multiplicity $e(\a)$ is equal to the intersection multiplicity at $p$ of the hypersurfaces $H_i = \{ x \mid f_i(x) = 0\}$, 
$i=1, \ldots, n$.
\end{Ex}
}

In Section \ref{sec-valuation-ideal} we use the material in Section \ref{sec-semigp-ideal} to give a formula for $e(\a_\bullet)$ 
in terms of covolume of a convex region. 

One can also define the notion of mixed multiplicity for $\m$-primary ideals 
as the polarization of the Hilbert-Samuel multiplicity 
$e(\a)$, i.e. it is the unique function $e(\a_1, \ldots, \a_n)$ which is invariant under permuting the arguments, is multi-additive with respect to product, and for any $\m$-primary ideal 
$\a$ the mixed multiplicity $e(\a, \ldots, \a)$ coincides with $e(\a)$. 
In fact one can show that in the above definition of mixed multiplicity 
the $\a_i$ need not be ideals and it suffices for them to be $\m$-primary subspaces.

Similar to multiplicity we have the following geometric meaning for the notion of mixed multiplicity when $R = \mathcal{O}_{X, p}$ is the local ring of a  point $p$ on an $n$-dimensional algebraic variety $X$.
Take $\m$-primary subspaces $\a_1, \ldots, \a_n$ in $R$. The mixed multiplicity $e(\a_1, \ldots, \a_n)$ is equal to the intersection multiplicity, at the origin, of the hypersurfaces 
$H_i = \{ x \mid f_i(x) = 0\}$, $i = 1, \ldots, n$, where each $f_i$ is a generic function from the $\a_i$. 
}

\section{Case of monomial ideals and Newton polyhedra} \label{sec-monomial-ideal}
In this section we discuss the case of monomial ideals. It is related to the classical notion of Newton polyhedron of a power series in $n$ variables. We will see that our Theorem \ref{th-vol-Gamma-semigroup} in this case immediately recovers (and generalizes) the local version of celebrated theorem of Bernstein-Kushnirenko (\cite{Kushnirenko} and \cite[Section 12.7]{AVG}).

Let $R$ be the local ring of an affine toric variety at its torus fixed point. The algebra $R$ can be realized as follows: Let $\C \subset \r^n$ be 
an  $n$-dimensional strongly convex rational polyhedral cone with apex at the origin, that is, $\C$ is an $n$-dimensional convex cone generated by a finite number of rational vectors and it does not contain any lines through the origin. Consider the semigroup algebra over $\k$ of the semigroup of integral points $\S = \C \cap \z^n$. In other words, consider the algebra of Laurent polynomials consisting of all the $f$ of the form
$f = \sum_{\alpha \in \C \cap \z^n} c_\alpha x^\alpha$, where we have used the shorthand notation $x = (x_1, \ldots, x_n)$, 
$\alpha = (a_1, \ldots, a_n)$ and $x^\alpha = x_1^{a_1} \cdots x_n^{a_n}$. Let $R$ be the localization of this Laurent polynomial algebra at the maximal ideal $\m$ generated by non-constant monomials. (Similarly instead of $R$ we can take its completion at the maximal ideal $\m$ which is an algebra of power series .)
 

\begin{Def} \label{def-Newton-polyhedron}
Let $\a$ be an $\m$-primary monomial ideal in $R$, that is, an $\m$-primary ideal generated by 
monomials. To $\a$ we can associate a subset $\I(\a) \subset \C \cap \z^n$ by
$$\I(\a) = \{\alpha \mid x^\alpha \in \a\}.$$
The convex hull $\Gamma(\a)$ 
of $\I(\a)$ is usually called the {\it Newton polyhedron} of the monomial ideal $\a$. 
It is a convex unbounded polyhedron in $\C$, moreover it is a 
$\C$-convex region. The {\it Newton diagram of $\a$} is the union of bounded faces of its Newton polyhedron.
\end{Def}

{
\begin{Rem} \label{rem-semigp-ideal-monomial-ideal}
It is easy to see that if $\a$ is an ideal in $R$ then $\I = \I(\a)$ is a semigroup ideal in $\S = \C \cap \z^n$, 
that is, if $x \in \I$ and $y \in \S$ then $x+y \in \I$. 
\end{Rem}
}

Let $\a$ be an $\m$-primary monomial ideal. Then for any $k>0$ we have $\I(\a^k) = k * \I(\a)$. It follows from Proposition 
\ref{prop-polyhedral-primary-semigroup-ideal} that $\I_\bullet$ defined by $\I_k = k * \I(\a)$ is a primary graded sequence in 
$\S = \C \cap \z^n$ and the convex region $\Gamma(\I_\bullet)$ associated to $\I_\bullet$ coincides with the 
Newton polyhedron $\Gamma(\a) = \conv(\I(\a))$ defined above. 

More generally, let $\a_\bullet$ be an $\m$-primary graded sequence of monomial ideals in $R$.
Associate a graded sequence $\I_\bullet = \I(\a_\bullet)$ in $\S$ to $\a_\bullet$ by: 
$$\I_k = \I(\a_k) = \{ \alpha \mid x^\alpha \in \a_k\}.$$
Clearly for any $k$ we have $k * \I(\a_1) = \I(\a_1^k) \subset \I(\a_k)$.
From the above we then conclude the following:
\begin{Prop} \label{prop-convex-region-monomial-ideal} 
The graded sequence $\I_\bullet = \I(\a_\bullet)$ is a primary graded sequence 
in the sense of Definition \ref{def-primary-sequence}. 
\end{Prop}

Let $\Gamma(\a_\bullet)$ denote the convex region associated to the primary graded sequence $\I_\bullet = \I(\a_\bullet)$ 
(Definition \ref{def-Gamma-I-semigroup}).
 We make the following important observation that $\a_\bullet \mapsto \Gamma(\a_\bullet)$ is additive with respect to 
the product of graded sequences of monomial ideals.
\begin{Prop} \label{prop-Newton-poly-additive}
Let $\a_\bullet, \b_\bullet$ be $\m$-primary graded sequences of monomial ideals in $R$. 
Then $\I(\a_\bullet \b_\bullet) = \I(\a_\bullet) + \I(\b_\bullet)$. It follows from Proposition 
\ref{prop-additivity-semigroup-ideal} that: $$\Gamma(\a_\bullet \b_\bullet) = \Gamma(\a_\bullet) + \Gamma(\b_\bullet).$$ 
\end{Prop}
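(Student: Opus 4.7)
The plan is to reduce the claim to a per-level statement about monomial ideals, and then invoke Proposition \ref{prop-additivity-semigroup-ideal}. By the definition of the product of graded sequences of subspaces, $(\a_\bullet\b_\bullet)_k = \a_k\b_k$. Since $\a_k$ and $\b_k$ are monomial ideals (their product is again a monomial ideal), it suffices to establish the single-index identity
$$\I(\a\b) = \I(\a) + \I(\b)$$
for any pair of monomial ideals $\a,\b \subset R$, and then apply this with $\a = \a_k$, $\b = \b_k$ for each $k$. Once $\I(\a_\bullet\b_\bullet) = \I(\a_\bullet) + \I(\b_\bullet)$ is known, the equality $\Gamma(\a_\bullet\b_\bullet) = \Gamma(\a_\bullet) + \Gamma(\b_\bullet)$ follows immediately from Proposition \ref{prop-additivity-semigroup-ideal}, applied to the primary graded sequences $\I(\a_\bullet)$ and $\I(\b_\bullet)$ (which are primary by Proposition \ref{prop-convex-region-monomial-ideal}).

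To prove the level-wise identity I would argue by two inclusions. The inclusion $\I(\a) + \I(\b) \subseteq \I(\a\b)$ is immediate from the definitions: if $\alpha \in \I(\a)$ and $\beta \in \I(\b)$, then $x^\alpha \in \a$, $x^\beta \in \b$, so $x^{\alpha+\beta} = x^\alpha x^\beta \in \a\b$, hence $\alpha + \beta \in \I(\a\b)$. For the reverse inclusion, note that because $\a$ and $\b$ are monomial ideals, each has a $\k$-basis of monomials indexed by $\I(\a)$ and $\I(\b)$ respectively; therefore the product $\a\b$ is spanned, and in fact generated as an ideal, by the monomials $\{x^{\alpha+\beta} \mid \alpha \in \I(\a),\, \beta \in \I(\b)\}$. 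Consequently, if $\gamma \in \I(\a\b)$, then $x^\gamma \in \a\b$ forces $\gamma = \alpha + \beta + s$ for some $\alpha \in \I(\a)$, $\beta \in \I(\b)$, and $s \in \S$. By Remark \ref{rem-semigp-ideal-monomial-ideal}, $\I(\a)$ is a semigroup ideal of $\S$, so $\alpha + s \in \I(\a)$, and therefore $\gamma = (\alpha + s) + \beta \in \I(\a) + \I(\b)$.

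There is no real obstacle in this argument; the only point requiring any care is the observation that the product $\a\b$ of two monomial ideals is monomial and is generated by the products of the monomial generators. After that, the equality at each level $k$ packages into the sequence-level identity $\I(\a_\bullet\b_\bullet) = \I(\a_\bullet) + \I(\b_\bullet)$, and Proposition \ref{prop-additivity-semigroup-ideal} converts this into the desired Minkowski-sum identity for the associated convex regions, completing the proof.
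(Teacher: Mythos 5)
Your proof is correct and follows the route the paper implicitly intends: the paper states $\I(\a_\bullet\b_\bullet)=\I(\a_\bullet)+\I(\b_\bullet)$ as an evident observation for monomial ideals (which is exactly what you establish level-by-level via the two inclusions and Remark \ref{rem-semigp-ideal-monomial-ideal}), and then invokes Proposition \ref{prop-additivity-semigroup-ideal} for the passage to $\Gamma$, just as you do. The only point you assert without comment is that in the local ring $R$ a monomial lies in a monomial ideal only if it is a monomial multiple of one of the generators; this is standard and easy to justify by clearing denominators, so there is no real gap.
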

  
From Proposition \ref{prop-convex-region-monomial-ideal}, Proposition \ref{prop-Newton-poly-additive} and 
Theorem \ref{th-vol-Gamma-semigroup} we readily obtain the following.


\begin{Th} \label{th-covol-monomial}
Let $\a_\bullet$ be an $\m$-primary graded sequence of monomial ideals in $R$. Then:
$$e(\a_\bullet) = n!~ \covol(\Gamma(\a_\bullet)).$$
In particular, if $\a$ is an $\m$-primary monomial ideal then:
$$e(\a) = n!~ \covol(\Gamma(\a)).$$ 
Here $\Gamma(\a)$ is the Newton polyhedron of $\a$ i.e. the convex hull of $\I(\a)$.
\end{Th}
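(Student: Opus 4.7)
The plan is to reduce the statement to Theorem \ref{th-vol-Gamma-semigroup} by showing that for a graded sequence of monomial ideals, the ring-theoretic Hilbert--Samuel function equals the semigroup Hilbert--Samuel function pointwise, after which the convex geometric result takes over.

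First, I would invoke Proposition \ref{prop-convex-region-monomial-ideal}, which tells us that the associated sequence of sets $\I_\bullet = \I(\a_\bullet) \subset \S = \C \cap \z^n$ is a primary graded sequence in the sense of Definition \ref{def-primary-sequence}, and that by construction $\Gamma(\a_\bullet) = \Gamma(\I_\bullet)$. Theorem \ref{th-vol-Gamma-semigroup} then already gives
$$\lim_{k \to \infty} \frac{H_{\I_\bullet}(k)}{k^n} = \covol(\Gamma(\I_\bullet)).$$
So it only remains to identify $H_{\a_\bullet}(k)$ with $H_{\I_\bullet}(k)$ for every $k$, which is the content of the key step.

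The key step is the linear-algebra identity $\dim_\k(R/\a_k) = \#(\S \setminus \I_k)$. I would argue as follows: in the semigroup algebra $\k[\S]$, the monomials $\{x^\alpha : \alpha \in \S\}$ form a $\k$-basis, and since $\a_k$ is a monomial ideal, it is the $\k$-span of $\{x^\alpha : \alpha \in \I_k\}$. Hence the residue classes of $\{x^\alpha : \alpha \in \S \setminus \I_k\}$ form a $\k$-basis for $\k[\S]/\a_k$, showing $\dim_\k(\k[\S]/\a_k) = \#(\S \setminus \I_k)$. Since $R$ is either the localization of $\k[\S]$ at $\m$ or the $\m$-adic completion, and since $\a_k$ is $\m$-primary (being $\m$-primary is preserved and every element of $\k[\S] \setminus \m$ becomes a unit in $R$), the natural map $\k[\S]/\a_k \to R/\a_k R$ is an isomorphism of $\k$-vector spaces. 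Therefore $H_{\a_\bullet}(k) = H_{\I_\bullet}(k)$ for all $k>0$.

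Combining these two observations yields
$$e(\a_\bullet) = n!\,\limsup_{k} \frac{H_{\a_\bullet}(k)}{k^n} = n!\,\lim_{k \to \infty} \frac{H_{\I_\bullet}(k)}{k^n} = n!\,\covol(\Gamma(\a_\bullet)),$$
which in particular upgrades the \emph{limsup} in Definition \ref{def-multiplicity} to an honest limit. The special case of a single monomial ideal $\a$ follows by applying this to the graded sequence $\a_k = \a^k$, noting that $\I(\a^k) = k * \I(\a)$ and hence $\Gamma(\a_\bullet) = \conv(\I(\a))$ is exactly the Newton polyhedron of $\a$. The only nontrivial point in the whole argument is the monomial-basis identification of $R/\a_k$, and everything else is a direct appeal to the combinatorial machinery already developed.
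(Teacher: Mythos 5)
Your proposal is correct and follows the same route as the paper: reduce to the semigroup result (Theorem \ref{th-vol-Gamma-semigroup}) via Proposition \ref{prop-convex-region-monomial-ideal}. The only thing you add is an explicit justification of the identity $\dim_\k(R/\a_k) = \#(\S \setminus \I_k)$ via the monomial basis of $\k[\S]$ together with the observation that localization (or completion) at $\m$ leaves the quotient by an $\m$-primary ideal unchanged; the paper treats this step as immediate and does not spell it out in Section \ref{sec-monomial-ideal}, though the analogous identity is proved later in Section \ref{sec-valuation-ideal} by a valuation argument. Your more elementary monomial-basis argument is the right one to use here and fills in what the paper's ``readily obtain'' leaves to the reader.
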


\begin{Th} \label{th-mixed-multi-mixed-covol-monomial}
Let $\a_{1}, \ldots, \a_{n}$ be $\m$-primary monomial ideals in $R$. Then the mixed multiplicity 
$e(\a_{1}, \ldots, \a_{n})$ is given by: $$e(\a_{1}, \ldots, \a_{n}) = n!~CV(\Gamma(\a_{1}), \ldots, \Gamma(\a_{n})),$$
where as before $CV$ denotes the mixed covolume of cobounded convex regions. 
\end{Th}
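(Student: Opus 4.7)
The plan is to recognize $F(\a_1, \ldots, \a_n) := n!\, CV(\Gamma(\a_1), \ldots, \Gamma(\a_n))$ as a candidate for the mixed multiplicity and then invoke the uniqueness of the polarization. Recall that the mixed multiplicity is characterized (as recalled just before Section \ref{sec-monomial-ideal}) as the unique function on $n$-tuples of $\m$-primary subspaces that is symmetric, multi-additive with respect to the product of subspaces, and reduces to $e(\a)$ on the diagonal. So it suffices to check that $F$ has these three properties; the theorem will then follow immediately.

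Symmetry of $F$ is immediate from property (i) in the definition of mixed covolume. The diagonal condition $F(\a, \ldots, \a) = e(\a)$ is exactly the content of Theorem \ref{th-covol-monomial} combined with property (iii) of mixed covolume: $F(\a, \ldots, \a) = n!\, CV(\Gamma(\a), \ldots, \Gamma(\a)) = n!\, \covol(\Gamma(\a)) = e(\a)$. The remaining condition, multi-additivity with respect to the product of monomial ideals, is where the specific geometry of Newton polyhedra enters. For any two $\m$-primary monomial ideals $\a, \b$, the exponents of monomials in $\a\b$ are sums of exponents of monomials in $\a$ and $\b$, so $\I(\a\b) = \I(\a) + \I(\b)$, and taking convex hulls yields $\Gamma(\a\b) = \Gamma(\a) + \Gamma(\b)$. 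This is exactly (the single-ideal version of) Proposition \ref{prop-Newton-poly-additive}. Combining this identity with the multi-linearity of $CV$ in the Minkowski sum (property (ii) of mixed covolume) gives
\[
F(\a_1\a_1',\a_2,\ldots,\a_n) = F(\a_1,\a_2,\ldots,\a_n) + F(\a_1',\a_2,\ldots,\a_n),
\]
as required.

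Having verified all three characterizing properties, the uniqueness of the polarization forces $F$ to coincide with the mixed multiplicity $e(\a_1, \ldots, \a_n)$, which is the claim. The only substantive step is the additivity $\Gamma(\a\b) = \Gamma(\a) + \Gamma(\b)$, which translates products of monomial ideals into Minkowski sums of convex regions; everything else is formal, relying on the already-established Theorem \ref{th-covol-monomial} and the standard properties of mixed covolume developed in Section \ref{sec-convex-diag}.
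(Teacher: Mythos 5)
Your proof is correct and follows essentially the same route that the paper implicitly takes (the paper states that the theorem follows from Proposition \ref{prop-convex-region-monomial-ideal}, Proposition \ref{prop-Newton-poly-additive}, and Theorem \ref{th-vol-Gamma-semigroup}, which is precisely the diagonal-case-plus-additivity-plus-uniqueness-of-polarization argument you spell out). You have simply made explicit the verification of the three characterizing properties and the observation that monomial ideals are closed under products, so the polarization comparison works within that subclass.
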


\begin{Rem} \label{rem-mixed-multi-graded-seq-monomial}
Using Theorem \ref{th-mixed-multi-poly-semigp}
one can define the mixed multiplicity of $\m$-primary graded sequences of monomial ideals. Then 
Theorem \ref{th-mixed-multi-mixed-covol-monomial} can immediately be extended to mixed multiplicities of 
$\m$-primary graded sequences of monomial ideals. 
\end{Rem}

One knows that the mixed multiplicity of an $n$-tuple $(\a_1, \ldots, \a_n)$ of $\m$-primary subspaces in $R$ 
gives the intersection multiplicity, at the origin, of hypersurfacs $H_i = \{ x \mid f_i(x) = 0\}$, $i=1, \ldots, n$, where each $f_i$ is a 
generic element from the subspace $\a_i$. Theorem \ref{th-mixed-multi-mixed-covol-monomial} then 
gives the following corollary. 
\begin{Cor}[Local Bernstein-Kushnirenko theorem] \label{cor-local-BK-toric-sing}
Let $\a_1, \ldots, \a_n$ be $\m$-primary monomial ideals in $R$. Consider a system of equations 
$f_1(x) = \cdots = f_n(x) = 0$ where each $f_i$ is a generic element from $\a_i$. Then the 
intersection multiplicity at the origin of this system is equal to $n!~\covol(\Gamma(\a_1), \ldots, \Gamma(\a_n)).$
\end{Cor}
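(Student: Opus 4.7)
The plan is to simply combine two facts already assembled in the preceding discussion: the geometric/algebraic characterization of the mixed multiplicity $e(\a_1,\ldots,\a_n)$ as an intersection multiplicity, and Theorem \ref{th-mixed-multi-mixed-covol-monomial}, which identifies this mixed multiplicity with the mixed covolume of the associated Newton polyhedra.

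First I would recall (as noted in the paragraph immediately preceding the statement) that for $\m$-primary subspaces $\a_1,\ldots,\a_n$ in the local ring $R=\mathcal{O}_{X,p}$ of a point $p$ on an $n$-dimensional variety $X$, choosing a generic element $f_i\in\a_i$ for each $i$ yields hypersurfaces $H_i=\{f_i=0\}$ whose local intersection multiplicity at $p$ equals $e(\a_1,\ldots,\a_n)$. In our toric setting $R$ is precisely the local ring at the torus-fixed point of the affine toric variety associated with the cone $\C$, so this general interpretation applies verbatim to the monomial ideals $\a_1,\ldots,\a_n$.

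Next, by Theorem \ref{th-mixed-multi-mixed-covol-monomial}, the mixed multiplicity of monomial ideals is expressed in terms of mixed covolumes of their Newton polyhedra:
\begin{equation*}
e(\a_1,\ldots,\a_n) \;=\; n!\,\, CV(\Gamma(\a_1),\ldots,\Gamma(\a_n)).
\end{equation*}
Chaining the two identities yields that the intersection multiplicity at the origin of the generic system $f_1(x)=\cdots=f_n(x)=0$ equals $n!\,CV(\Gamma(\a_1),\ldots,\Gamma(\a_n))$, which is exactly the statement of the corollary (reading the right-hand side as the mixed covolume, consistent with the notation of Theorem \ref{th-mixed-multi-mixed-covol-monomial}).

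Since both ingredients have already been established, there is essentially no obstacle: the corollary is a purely formal consequence. The only conceptual point worth flagging is to make explicit that the generic intersection multiplicity interpretation of $e(\a_1,\ldots,\a_n)$ is applicable in the toric local ring $R$ of Section \ref{sec-monomial-ideal}, which is automatic because $R$ is the local ring of the toric variety $\Spec(\k[\S])$ at its torus-fixed point, and hence fits the setup of Example \ref{ex-local-ring-of-subvariety}. With that observation in place, the proof is simply the two-line chain of equalities above.
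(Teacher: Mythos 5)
Your proof is correct and follows exactly the paper's own reasoning: the corollary is stated immediately after the paper recalls that the mixed multiplicity equals the generic intersection multiplicity, and then invokes Theorem \ref{th-mixed-multi-mixed-covol-monomial} to rewrite it as $n!\,CV(\Gamma(\a_1),\ldots,\Gamma(\a_n))$. No discrepancy.
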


\begin{Rem}
\begin{itemize}
\item[(i)] When $R$ is the algebra of polynomials $\k[x_1, \ldots, x_n]_{(0)}$ localized at the 
origin (or the algebra of power series localized at the origin), i.e. the case corresponding to the local ring of a smooth affine toric variety, 
Corollary \ref{cor-local-BK-toric-sing} is the local version of the classical Bernstein-Kushnirenko theorem (\cite[Section 12.7]{AVG}).
\item[(ii)] As opposed to the proof above, the original proof of the Kushnirenko theorem is quite involved.
\item[(iii)] Corollary \ref{cor-local-BK-toric-sing} has been known to the second author since the early 90's (cf. \cite{Askold-finite-sums}), although as far as the authors know it has not been published.
\end{itemize}
\end{Rem}



\section{Main results} \label{sec-valuation-ideal}
Let $R$ be a domain over a field $\k$. Equip the group $\z^n$ with a total order respecting addition.

\begin{Def}[Valuation] \label{def-valuation}
A {\it valuation} $v: R \setminus\{0\} \to \z^n$ is a function satisfying:
\begin{enumerate}
\item For all $0 \neq f, g \in R$, $v(fg) = v(f) + v(g)$.
\item For all $0 \neq f,g \in R$ with $f+g \neq 0$ we have $v(f+g) \geq \min(v(f), v(g))$. (One then shows that 
when $v(f) \neq v(g)$, $v(f+g) = \min(v(f), v(g))$.)
\item For all $0 \neq \lambda \in \k$, $v(\lambda) = 0$.
\end{enumerate}
We say that $v$ has {\it one-dimensional leaves}
if whenever $v(f) = v(g)$, there exists $\lambda \in \k$ with
$v(g + \lambda f) > v(g)$.
\end{Def}

{ From definition $\S = v(R \setminus \{0\}) \cup \{0\}$ is an additive subsemigroup of $\z^n$ which we call the 
{\it value semigroup of $(R, v)$}. Any valuation on $R$ extends to the field of fractions $K$ of $R$ by defining $v(f/g) = v(f) - v(g)$. The set 
$R_v = \{0 \neq f \in K \mid v(f) \geq 0\} \cup \{0\}$ is a local subring of $K$ called the {\it valuation ring of $v$}. Also
$\m_v = \{0 \neq f \in K \mid v(f) > 0\} \cup \{0\}$ is the maximal ideal in $R_v$. The field $R_v / \m_v$ is called the 
{\it residue field of $v$}. One can see that $v$ has one-dimensional leaves if and only if the residue field of $v$ is $\k$.}

\begin{Def} \label{def-semigp-ideal-of-ideal}
For a subspace $\a$ in $R$ define
$\I = \I(\a) \subset \S$ by:
$$\I = \{ v(f) \mid f \in \a \setminus \{0\} \}.$$
Similarly, for a graded sequence of subspaces $\a_\bullet$ in $R$, define
$\I_\bullet = \I(\a_\bullet)$ by:
$$\I_k = \I(\a_k) = \{ v(f) \mid f \in \a_k \setminus \{0\} \}.$$
\end{Def}

For the rest of the paper we assume that $R$ is a Noetherian local domain of dimension $n$ such that $R$ is 
an algebra over a field $\k$ isomorphic to the residue field $R/\m$, where $\m$ is the maximal ideal of $R$. Moreover, we assume that
$R$ has a {good} valuation in the following sense:

{
\begin{Def} \label{def-good-valuation}
We say that a $\z^n$-valued valuation $v$ on $R$ with one-dimensional leaves is {\it good} if the following hold: 

\noindent{(i)} The value semigroup $\S = v(R \setminus \{0\}) \cup \{0\}$ generates the whole lattice $\z^n$, and 
its associated cone $C(\S)$ is a strongly convex cone (recall that $C(\S)$ is the closure of convex hull of $\S$). It implies that there is a linear function $\ell: \r^n \to \r$ such that $C(\S)$ lies in $\ell_{\geq 0}$ and intersects $\ell^{-1}(0)$ only at the origin.
 
\noindent{(ii)} We assume there exists $r_0 > 0$ and a linear function $\ell$ as above such that for any $f \in R$ 
if $\ell(v(f)) \geq kr_0$ for some $k>0$ then $f \in \m^k$.
  
The condition (ii) in particular implies that for any $k>0$ we have: 
$$\I(\m^k) \cap \ell_{\geq kr_0} = \S \cap \ell_{\geq kr_0}.$$
In other words, the sequence $\M_\bullet$ given by 
$\M_k = \I(\m^k)$ is a primary graded sequence in the value semigroup $\S$.
\end{Def}
}
The following is a generalization of Proposition \ref{prop-convex-region-monomial-ideal}.
\begin{Prop} \label{prop-good-val-primary-ideal}
Let $v$ be a good valuation on $R$. 
Let $\a_\bullet$ be an $\m$-primary graded sequence of subspaces in $R$. Then the associated graded sequence
$\I_\bullet = \I(\a_\bullet)$ is a primary graded sequence in the value semigroup $\S$ in the sense of 
Definition \ref{def-primary-sequence}. 
\end{Prop}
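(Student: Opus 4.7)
The plan is to check the two conditions in Definition \ref{def-primary-sequence} separately: first the graded-sequence property $\I_k + \I_m \subset \I_{k+m}$, and then the ``cofinality'' condition $\I_k \cap \ell_{\geq kt_0} = \S \cap \ell_{\geq kt_0}$ for a suitable $t_0 > 0$.

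The first property is essentially a restatement of multiplicativity of $v$. Given $\alpha \in \I_k$ and $\beta \in \I_m$, choose $f \in \a_k\setminus\{0\}$ and $g \in \a_m\setminus\{0\}$ with $v(f)=\alpha$, $v(g)=\beta$. Since $\a_\bullet$ is a graded sequence, $fg \in \a_k\a_m \subset \a_{k+m}$, and $fg \neq 0$ because $R$ is a domain. Then $v(fg) = v(f) + v(g) = \alpha+\beta \in \I(\a_{k+m}) = \I_{k+m}$.

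For the second property, the idea is to combine the hypothesis that $\a_1$ is $\m$-primary with condition (ii) of the goodness of $v$. Since $\a_1 \supset \m^m$ for some integer $m > 0$, and since iterating $\a_k\a_j \subset \a_{k+j}$ gives $\a_1^k \subset \a_k$, I get $\m^{km} \subset \a_k$ for every $k > 0$. Now set $t_0 := m r_0$, where $r_0$ and $\ell$ are as in Definition \ref{def-good-valuation}(ii). For any $\alpha \in \S \cap \ell_{\geq kt_0}$, pick $f \in R\setminus\{0\}$ with $v(f) = \alpha$; then $\ell(v(f)) \geq k t_0 = (km)r_0$, so by condition (ii) we have $f \in \m^{km} \subset \a_k$, hence $\alpha = v(f) \in \I_k$. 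This gives $\S \cap \ell_{\geq kt_0} \subset \I_k \cap \ell_{\geq kt_0}$, and the reverse inclusion is trivial from $\I_k \subset \S$.

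There is no real obstacle here; the proof is essentially a bookkeeping exercise once one notices the key point that the $\m$-primary condition on $\a_1$ transfers to $\a_k$ via $\a_1^k \subset \a_k$, allowing one to reduce the problem to the already-built-in primary behavior of the sequence $\M_\bullet = \I(\m^\bullet)$ guaranteed by the definition of a good valuation. The constant $t_0 = m r_0$ does the job uniformly in $k$.
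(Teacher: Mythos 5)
Your proof is correct and follows essentially the same route as the paper's: pass from $\m^m \subset \a_1$ to $\m^{km} \subset \a_k$ and apply condition (ii) of the good valuation with $t_0 = m r_0$. The only cosmetic difference is that you spell out the verification that $\I_\bullet$ is a graded sequence (via $v(fg) = v(f) + v(g)$ and $R$ being a domain), which the paper leaves implicit.
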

\begin{proof}
Let $m>0$ be such that $\m^m \subset \a_1$. Then 
for any $k>0$ we have $\m^{km} \subset \a_{k}$ which then implies that $\M_{km} \subset \I_k$. This proves the claim. 
\end{proof}

{
\begin{Ex} \label{ex-good-val-toric-local-ring}
As in Section \ref{sec-monomial-ideal} let $R$ be the local ring of an affine toric variety at its torus fixed point:
Take $\C \subset \r^n$ to be an $n$-dimensional strongly convex rational 
polyhedral cone with apex at the origin.
Consider the algebra of Laurent polynomials consisting of all the $f$ of the form
$f = \sum_{\alpha \in \C \cap \z^n} c_\alpha x^\alpha$. 
Then $R$ is the localization of this algebra at the maximal ideal generated by non-constant monomials.
Take a total order on $\z^n$ which respects addition and such that the semigroup
$\S = \C \cap \z^n$ is well-ordered. We also require that if $\ell(\alpha) > \ell(\beta)$ then $\alpha > \beta$, for any $\alpha, \beta \in \z^n$.
Such a total order can be constructed as follows: pick linear functions $\ell_2, \ldots, \ell_n$ on $\r^n$ 
such that $\ell, \ell_2, \ldots, \ell_n$ are linearly independent, and 
for each $i$ the cone $\C$ lies in $(\ell_{i})_{\geq 0}$.
Given $\alpha, \beta \in \z^n$ define $\alpha > \beta$ if $\ell(\alpha) > \ell(\beta)$, or $\ell(\alpha) = \ell(\beta)$ and $\ell_2(\alpha) > \ell_2(\beta)$, and so on.

Now one defines a (lowest term) valuation $v$ on the algebra $R$ with values in $\S = \C \cap \z^n$ as follows: 
For $f = \sum_{\alpha \in \S} c_\alpha x^\alpha$ put: $$v(f) = \min\{ \alpha \mid c_\alpha \neq 0\}.$$ 
Clearly $v$ extends to the field of fractions of Laurent polynomials and in particular to $R$. Similarly $v$ can be defined for 
formal power series and formal Laurent series.
It is easy to see that $v$ is a valuation with one-dimensional leaves on $R$. Let us show that it is moreover a good valuation. 
Take $0 \neq f \in R$. Without loss of generality we can take $f$ to be a Laurent series $f = \sum_{\alpha \in \S} c_\alpha x^\alpha$.
Applying Proposition \ref{prop-polyhedral-primary-semigroup-ideal} to the sequence $\I_\bullet$, where $\I_k = \{ \alpha \mid x^\alpha \in \m^k\}$, 
we know that there exists $r_0>0$ with the following property: if for some $\alpha \in \S$ we have 
$\ell(\alpha) \geq kr_0$ then $x^\alpha \in \m^k$.
On the other hand, if $\alpha \leq \beta$ then $\ell(\alpha) \leq \ell(\beta)$. Thus $\ell(\beta) \geq kr_0$ and hence $x^\beta \in \m^k$. It follows that 
if $\ell(v(f)) \geq kr_0$ then all the nonzero monomials in $f$ lie in $\m^k$ and hence $f \in \m^k$.
This proves the claim that $v$ is a good valuation on $R$. 
\end{Ex}
}
The arguments in Example \ref{ex-good-val-toric-local-ring} in particular show the following:
\begin{Th} \label{th-good-val-reg-local-ring}
If $R$ is a regular local ring then $R$ has a good valuation.
\end{Th}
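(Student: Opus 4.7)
The plan is to reduce to the power-series case already treated in Example \ref{ex-good-val-toric-local-ring}, via the completion of $R$ at $\m$. Let $R$ be a regular local ring of dimension $n$ over $\k$ with $R/\m \cong \k$, and fix a regular system of parameters $x_1, \ldots, x_n$ generating $\m$. By the Cohen structure theorem in the equicharacteristic case with coefficient field $\k$, the $\m$-adic completion $\hat R$ is isomorphic as a $\k$-algebra to the formal power series ring $\k[[x_1, \ldots, x_n]]$. Moreover Krull's intersection theorem $\bigcap_k \m^k = (0)$ ensures that the canonical map $R \hookrightarrow \hat R$ is injective, so we may view $R$ as a subring of $\hat R$.

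Next, I would define a $\z^n$-valued valuation directly on $\hat R$ by exactly the recipe of Example \ref{ex-good-val-toric-local-ring} applied to the cone $\C = \r_{\geq 0}^n$ and semigroup $\S = \z_{\geq 0}^n$. That is, fix a linear function $\ell(\alpha) = \alpha_1 + \cdots + \alpha_n$ and a total order on $\z^n$ that respects addition and refines $\ell$ (for instance, weight-then-lexicographic with respect to $\ell, \ell_2, \ldots, \ell_n$ as in the example). For $0 \neq f = \sum_\alpha c_\alpha x^\alpha \in \hat R$ set
$$v(f) = \min\{\alpha \mid c_\alpha \neq 0\}.$$
The verifications that $v$ satisfies the valuation axioms and has one-dimensional leaves are word-for-word the same as in the power-series portion of Example \ref{ex-good-val-toric-local-ring}. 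Restricting $v$ to $R \subset \hat R$ yields a valuation on $R$ with values in $\z_{\geq 0}^n \subset \z^n$ and residue field $\k$.

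Finally, I would verify that $v$ is good in the sense of Definition \ref{def-good-valuation}. For condition (i): the value semigroup $\S_R = v(R \setminus \{0\}) \cup \{0\}$ lies in $\z_{\geq 0}^n$, so its cone $C(\S_R)$ is contained in the strongly convex cone $\r_{\geq 0}^n$ and is therefore itself strongly convex; the function $\ell$ fixed above meets the requirement on $C(\S_R)$. For condition (ii), I claim one can take $r_0 = 1$. Indeed, if $f \in R$ and $\ell(v(f)) \geq k$, then since the total order refines $\ell$, every monomial occurring in the power series expansion of $f$ in $\hat R$ has total degree at least $k$; equivalently $f \in \hat\m^k = (x_1, \ldots, x_n)^k \hat R$. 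Now the standard fact $\hat\m^k \cap R = \m^k$ (which follows from the faithful flatness of $\hat R$ over $R$, or from the Artin--Rees lemma together with Krull's intersection theorem) forces $f \in \m^k$, as desired.

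The only non-formal ingredient is the Cohen structure theorem identifying $\hat R$ with $\k[[x_1, \ldots, x_n]]$; everything else is either a routine transcription of the toric argument or the well-known equality $\hat\m^k \cap R = \m^k$. I do not expect a serious obstacle, only the need to be careful that working in the completion rather than $R$ itself is legitimate, which is precisely why the faithful-flatness step is the technical heart of the verification of (ii).
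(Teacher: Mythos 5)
Your proposal is correct and follows the same strategy the paper intends: construct the lowest-term valuation as in Example \ref{ex-good-val-toric-local-ring} (with $\C=\r_{\geq 0}^n$) and transfer it to $R$. The paper merely asserts that ``the arguments in Example \ref{ex-good-val-toric-local-ring} in particular show'' the theorem, without spelling out the passage from a general regular local ring to the power-series model; you fill in exactly the missing steps (Cohen structure theorem for $\hat R\cong\k[[x_1,\ldots,x_n]]$, injectivity of $R\hookrightarrow\hat R$ via Krull intersection, and the descent $\hat\m^k\cap R=\m^k$ by faithful flatness), so your write-up is a more careful version of the same argument rather than a different one. One small remark: the paper's example actually allows $r_0$ to come from Proposition \ref{prop-polyhedral-primary-semigroup-ideal}; in the regular case with $\ell=\sum\alpha_i$ the choice $r_0=1$ you make is the optimal constant and the reduction is slightly cleaner than invoking the proposition.
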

\begin{proof}
The completion $\overline{R}$ of $R$ is isomorphic to an algebra of formal power series over the residue field $\k$. The above construction gives a 
good valuation on $\overline{R}$. One verifies that the restriction of this valuation to $R$ is still a good valuation. 
\end{proof}

More generally, one has:
\begin{Th} \label{th-good-val-S/R}
Suppose $R$ is an analytically irreducible local domain (i.e. the completion of $R$ has no zero divisors). Moreover, suppose that 
there exists a regular local ring $S$ containing $R$ such that $S$ is { essentially of finite type} over $R$, $R$ and $S$ have the same quotient field $\k$ 
and the residue field map $R/\m_R \to S/\m_S$ is an isomorphism. Then $R$ has a good valuation.
\end{Th}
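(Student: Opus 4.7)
The plan is to construct the good valuation on $R$ by restricting a good valuation on $S$, exploiting the fact that $R$ and $S$ share the same fraction field. By Theorem \ref{th-good-val-reg-local-ring}, the regular local ring $S$ admits a good valuation $v_S : S \setminus \{0\} \to \z^n$. Extending $v_S$ uniquely to the common fraction field by $v_S(f/g) = v_S(f) - v_S(g)$ and restricting to $R$, one obtains a $\z^n$-valued valuation $v := v_S|_{R \setminus \{0\}}$. This $v$ has one-dimensional leaves inherited from $v_S$: the residue field of $v_S$ is $\k$, and the hypothesis $R/\m_R \cong S/\m_S = \k$ ensures that the residue field of $v$ computed from $R$ is also $\k$.

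Next I would verify conditions (i) and (ii) of Definition \ref{def-good-valuation}. Condition (i) is immediate: the value semigroup $\S_R = v(R \setminus \{0\}) \cup \{0\}$ is contained in $\S_S = v_S(S \setminus \{0\}) \cup \{0\}$, so $C(\S_R) \subset C(\S_S)$ is strongly convex, and the same linear function $\ell$ witnessing this for $\S_S$ works for $\S_R$. For (ii), the goodness of $v_S$ supplies $r_0^S > 0$ such that $\ell(v_S(f)) \geq k r_0^S$ forces $f \in \m_S^k$. Applied to $f \in R$, this gives $f \in \m_S^k \cap R$, reducing the verification of (ii) to comparing the filtration $\{\m_S^k \cap R\}_{k \geq 1}$ with the $\m_R$-adic filtration on $R$.

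The main obstacle is therefore producing a constant $c > 0$ such that
\[
\m_S^{ck} \cap R \subset \m_R^k \qquad \text{for all } k \geq 1;
\]
given this, condition (ii) holds for $v$ with $r_0 := c\, r_0^S$. Here analytic irreducibility enters decisively. Since $S$ dominates $R$ one has $\m_R = \m_S \cap R$, so $\{\m_S^k \cap R\}$ is a descending multiplicative filtration of $R$ by $\m_R$-primary ideals. The standard route is to pass to the $\m_R$-adic completion $\widehat{R}$, which is a domain by analytic irreducibility, invoke Chevalley's theorem (that any decreasing sequence of ideals in $\widehat{R}$ with trivial intersection is cofinal with the powers of the maximal ideal), and combine it with Artin--Rees to deduce that $\{\m_S^k \cap R\}$ is linearly equivalent to $\{\m_R^k\}$. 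This is essentially what Cutkosky establishes in \cite[Theorem 4.2 and Lemma 4.3]{Cutkosky1}, and one could either cite it directly or reproduce the argument in the present setting. The transfer of the valuation from $S$ to $R$ is otherwise formal; the technical heart of the proof lies in this linear equivalence of filtrations.
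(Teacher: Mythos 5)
Your proof takes the same route as the paper: construct a good valuation on the regular local ring $S$ via Theorem \ref{th-good-val-reg-local-ring}, restrict it to $R$ along the shared fraction field, and invoke the linear comparison of the filtrations $\{\m_S^k \cap R\}$ and $\{\m_R^k\}$ to verify condition (ii) of Definition \ref{def-good-valuation} --- the paper disposes of this last step in one line by citing the ``linear Zariski subspace theorem'' of H\"ubl \cite[Theorem 1]{Hubl} or Cutkosky \cite[Lemma 4.3]{Cutkosky1}, which is exactly the inclusion $\m_S^{ck} \cap R \subset \m_R^k$ you isolate as the heart of the matter. One small caveat: your parenthetical gloss that this follows from Chevalley's theorem plus Artin--Rees undersells the difficulty. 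Chevalley's theorem in $\widehat{R}$ only yields that $\{\m_S^k \cap R\}$ is \emph{topologically} cofinal with $\{\m_R^k\}$ (for each $k$ some $\m_S^{m(k)} \cap R \subset \m_R^k$), with no control on the growth of $m(k)$; the \emph{linear} bound $m(k) = ck$ is precisely the content of Rees' strong valuation theorem / Izumi's theorem, which is what H\"ubl and Cutkosky actually prove and what makes analytic irreducibility indispensable. Since you ultimately defer to Cutkosky for this step, the argument stands, but the sketch of ``the standard route'' should not be mistaken for a proof of the linear comparison.
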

\begin{proof}
By Theorem \ref{th-good-val-reg-local-ring}, $S$ has a good valuation.
By the linear Zariski subspace  theorem in \cite[Theorem 1]{Hubl} or \cite[Lemma 4.3]{Cutkosky1}
$v_{|R}$ is a good valuation for $R$ too.
\end{proof}

Using Theorem \ref{th-good-val-S/R} and as in \cite[Theorem 5.2]{Cutkosky1} we have the following:
\begin{Th} \label{th-good-val-analytically-irr-sing}
Let $R$ be an analytically irreducible local domain over $\k$. Then $R$ has a good valuation. 
\end{Th}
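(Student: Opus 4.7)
The plan is to reduce Theorem \ref{th-good-val-analytically-irr-sing} to Theorem \ref{th-good-val-S/R} by exhibiting, for any analytically irreducible local domain $R$, a regular local overring $S$ satisfying the hypotheses of that theorem: $R \subset S$ with $S$ essentially of finite type over $R$, with the same fraction field as $R$, and with the residue field map $R/\m_R \to S/\m_S$ an isomorphism. Once such an $S$ is produced, Theorem \ref{th-good-val-reg-local-ring} endows $S$ with a good valuation $v$, and Theorem \ref{th-good-val-S/R} immediately gives that the restriction $v|_R$ is a good valuation on $R$.

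To construct $S$, I would first exploit analytic irreducibility: by assumption, $\hat R$ is a Noetherian complete local domain, so by Cohen's structure theorem $\hat R$ is module-finite over a power series subring $T = \k[[t_1,\ldots,t_n]]$ with $n = \dim R$, and the residue field of $T$ is $\k$. Work inside the fraction field $L$ of $\hat R$, a finite extension of the fraction field of $T$. The regular local ring $T$ already carries the lowest-term valuation of Example \ref{ex-good-val-toric-local-ring}, which extends uniquely to $L$ because $L$ is a field (this is where analytic irreducibility is essential). Now construct $S$ by a standard algebraization: adjoin to $R$ finitely many elements of its normalization inside its own fraction field $\Frac(R) \subset L$, and localize at the center of the extended valuation; choose the elements so that $S$ becomes regular, essentially of finite type over $R$, with fraction field $\Frac(R)$ and residue field $\k$. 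This is exactly the construction carried out in \cite[Theorem 5.2]{Cutkosky1}, which I would invoke.

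The main obstacle is the construction step itself: producing a regular local $S$ birational over $R$ with the same residue field is a very weak form of resolution of singularities, and it is available here precisely because the analytic irreducibility of $R$ sharply restricts the geometry of $\Spec(\hat R)$ and guarantees a unique center of the upstairs valuation on $\Frac(R)$. Once $S$ is in hand, the remainder of the proof is formal: Theorem \ref{th-good-val-reg-local-ring} provides the good valuation on $S$, and H\"ubl's linear Zariski subspace theorem (already invoked in the proof of Theorem \ref{th-good-val-S/R}) supplies the linear comparison between the $\m_R$-adic filtration on $R$ and the $v$-filtration, which is condition (ii) of Definition \ref{def-good-valuation}; condition (i) is inherited automatically since the value semigroup of $v|_R$ sits inside that of $v$ on $S$, whose cone is strongly convex.
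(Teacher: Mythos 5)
Your proposal matches the paper's proof, which itself consists of exactly this reduction: cite \cite[Theorem 5.2]{Cutkosky1} to produce a regular local ring $S$, essentially of finite type and birational over $R$ with the same residue field, and then apply Theorem \ref{th-good-val-S/R}. One small inaccuracy worth flagging: a valuation on the fraction field of $T$ does not extend \emph{uniquely} to a finite field extension $L$ merely because $L$ is a field --- extensions to a finite extension exist and are finite in number but are generally not unique, and it suffices here to choose any one of them; the real role of analytic irreducibility is that $\hat R$ being a domain lets a valuation exist on it at all and makes H\"ubl's linear Zariski subspace theorem applicable. Since you ultimately delegate the construction of $S$ to Cutkosky's Theorem 5.2, this slip does not affect the correctness of the argument.
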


{
\begin{Prop}
Let $\I = \I(\a)$ be the subset of integral points associated to an $\m$-primary subspace 
$\a$ in $R$. Then we have: 
$$\dim_\k(R / \a) = \#( \S \setminus \I).$$
\end{Prop}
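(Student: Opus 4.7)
The plan is to exhibit an explicit basis of $R/\a$ indexed by $\S \setminus \I$, using the one-dimensional leaves property of $v$ to move between elements of a given valuation, and the $\m$-primarity of $\a$ (combined with condition (ii) of a good valuation) to guarantee that the iterative procedure below actually terminates.

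First I would verify that $\S \setminus \I$ is finite: since $\a \supset \m^N$ for some $N$, Proposition~\ref{prop-good-val-primary-ideal} applied to the sequence $\m^\bullet$ gives $\S \cap \ell_{\geq Nr_0} \subset \I(\m^N) \subset \I$, so $\S \setminus \I \subset \S \cap \ell_{< Nr_0}$, which is finite because $\C \cap \ell_{<Nr_0}$ is bounded. Next, for each $s \in \S \setminus \I$ choose an element $f_s \in R$ with $v(f_s) = s$. I claim the classes $\{\bar f_s : s \in \S \setminus \I\}$ form a $\k$-basis of $R/\a$.

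For linear independence, suppose $\sum_{s \in T} c_s f_s \in \a$ with not all $c_s \in \k$ zero, where $T \subset \S \setminus \I$ is the finite support of $(c_s)$. The valuations $v(c_s f_s) = s$ are pairwise distinct, so a standard property of valuations yields $v\bigl(\sum_s c_s f_s\bigr) = \min\{s : c_s \neq 0\}$, which lies in $\S \setminus \I$. But this element also lies in $\a \setminus \{0\}$, so its valuation lies in $\I$, a contradiction.

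For spanning, I would argue by an iterative reduction. Given $g \in R$, I aim to produce scalars $c_s \in \k$ (with $s \in \S \setminus \I$) and a sequence $g = g_0, g_1, g_2, \ldots$ with $v(g_{k+1}) > v(g_k)$, such that $g_k \equiv g - \sum_s c_s^{(k)} f_s \pmod{\a}$. At each step, if $v(g_k) \in \I$, pick $h \in \a$ with $v(h) = v(g_k)$ and, by one-dimensional leaves, $\lambda \in \k$ with $v(g_k - \lambda h) > v(g_k)$; set $g_{k+1} = g_k - \lambda h$. If instead $v(g_k) = s \in \S \setminus \I$, choose $\lambda \in \k$ with $v(g_k - \lambda f_s) > v(g_k)$ and set $g_{k+1} = g_k - \lambda f_s$, absorbing $\lambda$ into $c_s$. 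The valuations $v(g_k)$ strictly increase and lie in $\S$; the main point is that once $\ell(v(g_k)) \geq Nr_0$, condition (ii) of a good valuation forces $g_k \in \m^N \subset \a$, and we are done. Termination in finitely many steps is ensured because $\S \cap \ell_{< Nr_0}$ is finite.

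The main obstacle is the spanning step, specifically ensuring that the iterative procedure terminates after finitely many reductions rather than producing only an infinite convergent series; this is precisely where the good-valuation condition (ii), and not merely the existence of a valuation with one-dimensional leaves, is essential.
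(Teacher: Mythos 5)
Your proof is correct and follows essentially the same route as the paper: both exhibit a $\k$-basis of $R/\a$ indexed by $\S\setminus\I$, using distinct valuations for linear independence, the one-dimensional-leaves property to reduce, and condition (ii) of a good valuation to guarantee finiteness/termination. The paper phrases spanning as a maximal-counterexample argument (take $f$ not in $\mathrm{span}(\a\cup B)$ with largest $v(f)$ and derive a contradiction), whereas you run the contrapositive as an explicit iterative reduction of an arbitrary $g$; these are the same idea. One small point in your favor: you spell out the linear-independence step, which the paper's proof states only implicitly. Your termination argument is also sound, though it is worth noting why: as long as the process continues, $v(g_k)$ stays in the finite set $\S\cap\ell_{<Nr_0}$ (since leaving it means $g_k\in\m^N\subset\a$), and a strictly increasing sequence in a finite totally ordered set must stop.
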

\begin{proof}
Take $m>0$ with $\m^m \subset \a$ and let $r_0$ and $\ell$ be as in Definition \ref{def-good-valuation}. If 
$\ell(v(f)) > mr_0$ then $f \in \m^{m} \subset \a$. Thus the set of valuations of elements in $R \setminus \a$ is bounded. In particular 
$\S \setminus \I$ is finite. 
Let $\{v_1, \ldots, v_r\}  = \S \setminus \I$.  Let $B = \{b_1, \ldots, b_r\} \subset R$ be such that $v(b_i) = v_i$, $i=1, \ldots, r$.
We claim that no linear combination of $b_1, \ldots, b_r$ lies in $\a$. By contradiction suppose 
$\sum_{i} c_i b_i = a \in \a$. Then $v(\sum_i c_i b_i)$ is equal to $v(b_j)$ for some $j$. This implies that $v(b_j)$ should lie in 
$\I$ which contradicts the choice of the $v_i$. Thus the image of $B$ in $R/\a$ is a linearly independent set. 
Among the set of elements in $R$ that are not in the span of $\a$ and $B$ take $f$ with maximum $v(f)$. If $v(f) = v(b)$ for some 
$b \in B$, then we can subtract a multiple of $b$ from $f$ getting an element $g$ with $v(g) > v(f)$ which contradicts the choice of
$f$. Similarly $v(f)$ can not lie in $\I$ otherwise we can subtract an element of $\a$ from $f$ to arrive at a similar contradiction. 
This shows that the set of images of elements of $B$ in $R/\a$ is a $\k$-vector space basis for $R/\a$ which proves the proposition.
\end{proof}
}

\begin{Cor}
Let $\a_\bullet$ be an $\m$-primary graded sequence of subspaces in $R$ and put $\I_\bullet = \I(\a_\bullet)$. 
We then have:
$$e(\a_\bullet) = e(\I_\bullet).$$
\end{Cor}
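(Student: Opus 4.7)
The plan is to reduce the corollary to the immediately preceding proposition together with Theorem \ref{th-vol-Gamma-semigroup}. First I would observe that the hypothesis that $\a_\bullet$ is $\m$-primary means $\a_1$ contains a power of $\m$, and since $\a_k \supset \a_1^k$ each term $\a_k$ is itself an $\m$-primary subspace of $R$. Applying the preceding proposition term by term then yields the pointwise identity
\[
H_{\a_\bullet}(k) \;=\; \dim_\k(R/\a_k) \;=\; \#(\S \setminus \I(\a_k)) \;=\; H_{\I_\bullet}(k),
\]
for every $k > 0$. This is the crucial bridge from the ring-theoretic side to the combinatorial side.

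Next I would invoke Proposition \ref{prop-good-val-primary-ideal}, which asserts that $\I_\bullet = \I(\a_\bullet)$ is a primary graded sequence of subsets in the value semigroup $\S$ in the sense of Definition \ref{def-primary-sequence}. This places $\I_\bullet$ exactly in the hypotheses of Theorem \ref{th-vol-Gamma-semigroup}, which in turn guarantees that the honest limit
\[
\lim_{k \to \infty} \frac{H_{\I_\bullet}(k)}{k^n} \;=\; \covol(\Gamma(\I_\bullet)) \;=\; e(\I_\bullet)
\]
exists.

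Finally, combining the term-by-term equality $H_{\a_\bullet}(k) = H_{\I_\bullet}(k)$ with the existence of this limit, the $\limsup$ appearing in Definition \ref{def-multiplicity} is in fact an honest limit, and the two multiplicities agree (after reconciling the normalization conventions in Definitions \ref{def-multiplicity} and \ref{def-multiplicity-semigp-ideal}). There is no serious obstacle at this stage: all the work has already been done, on the one hand in the preceding proposition, whose argument uses the one-dimensional-leaves property of the good valuation to turn a codimension count in $R$ into a complement count in $\S$, and on the other hand in Theorem \ref{th-vol-Gamma-semigroup}, which extracts the asymptotic behaviour of $H_{\I_\bullet}(k)/k^n$ via the Newton--Okounkov convex set machinery from Section \ref{sec-semigp-int}.
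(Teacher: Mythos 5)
Your proof is correct and is essentially the argument the paper intends (the corollary is stated without a proof precisely because it is an immediate consequence of the preceding proposition applied term by term, plus Proposition \ref{prop-good-val-primary-ideal} and Theorem \ref{th-vol-Gamma-semigroup} to upgrade the $\limsup$ to an honest limit). You are also right to flag the normalization: Definition \ref{def-multiplicity} carries a factor of $n!$ that Definition \ref{def-multiplicity-semigp-ideal} does not, so as literally stated the corollary should read $e(\a_\bullet) = n!\,e(\I_\bullet)$; this is a small inconsistency in the paper's conventions rather than a gap in your reasoning.
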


\begin{Def} \label{def-Gamma-I}
To the sequence of subspaces $\a_\bullet$ we associate a $\C$-convex region $\Gamma(\a_\bullet)$, which is the 
convex region $\Gamma(\I_\bullet)$ associated to the primary sequence $\I_\bullet = \I(\a_\bullet)$.
The convex region $\Gamma(\a_\bullet)$ depends on the choice of the valuation $v$. By definition the convex region $\Gamma(\a)$ 
associated to an $\m$-primary subspace $\a$ is the convex region associated to the sequence of subspaces $(\a, \a^2, \a^3, \ldots)$.
\end{Def}

\begin{Th} \label{th-multi-ideal-covol}
Let $\a_\bullet$ be an $\m$-primary graded sequence of subspaces in $R$. Then:
$$e(\a_\bullet) = n!~ \lim_{k \to \infty} \frac{H_{\a_\bullet}(k)}{k^n} = n!~ \covol(\Gamma(\a_\bullet)).$$
In particular, if $\a$ is an $\m$-primary ideal we have $e(\a) = n!~\covol(\Gamma(\a))$.
\end{Th}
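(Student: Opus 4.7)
The proof will be essentially a direct gluing of results already established. The plan is to reduce the statement, via the good valuation $v$, to the corresponding statement for primary graded sequences in the value semigroup $\S$, which was proved in Theorem \ref{th-vol-Gamma-semigroup}.

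First, I would invoke Proposition \ref{prop-good-val-primary-ideal} to know that the associated sequence $\I_\bullet = \I(\a_\bullet)$ of images under $v$ is a primary graded sequence in $\S$, in the sense of Definition \ref{def-primary-sequence}. Second, I would use the preceding proposition (relating $\dim_\k(R/\a)$ to $\#(\S \setminus \I(\a))$) applied level by level: since each $\a_k$ is $\m$-primary, we have
$$H_{\a_\bullet}(k) = \dim_\k(R / \a_k) = \#(\S \setminus \I_k) = H_{\I_\bullet}(k)$$
for every $k > 0$. This identifies the Hilbert-Samuel function on the ring side with the combinatorial Hilbert-Samuel function on the semigroup side.

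Third, I would apply Theorem \ref{th-vol-Gamma-semigroup} to the primary graded sequence $\I_\bullet$. That theorem guarantees existence of the limit
$$\lim_{k \to \infty} \frac{H_{\I_\bullet}(k)}{k^n} = \covol(\Gamma(\I_\bullet)),$$
and by Definition \ref{def-Gamma-I} the right-hand side equals $\covol(\Gamma(\a_\bullet))$. Combining with the identity $H_{\a_\bullet}(k) = H_{\I_\bullet}(k)$ from the previous step, we conclude that the $\limsup$ in Definition \ref{def-multiplicity} is actually a limit, and
$$e(\a_\bullet) = n!~\lim_{k \to \infty} \frac{H_{\a_\bullet}(k)}{k^n} = n!~\covol(\Gamma(\a_\bullet)).$$
The final assertion, for a single $\m$-primary ideal $\a$, is the special case where $\a_k = \a^k$.

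There is no genuine obstacle left at this stage: all three ingredients have already been proved earlier in the paper. The conceptual weight of the theorem has been absorbed into (a) the good-valuation hypothesis, which ensures that $\I_\bullet$ is a primary graded sequence (Proposition \ref{prop-good-val-primary-ideal}), and (b) the dimension-counting proposition $\dim_\k(R/\a) = \#(\S \setminus \I(\a))$, whose proof relies crucially on the fact that $v$ has one-dimensional leaves and that its good-valuation condition forces $\S \setminus \I(\a)$ to be finite. Once these two pieces are in hand, the main theorem is just a transcription of Theorem \ref{th-vol-Gamma-semigroup}.
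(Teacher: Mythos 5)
Your proposal is correct and takes essentially the same route as the paper: the paper itself gives no separate proof of Theorem \ref{th-multi-ideal-covol}, leaving it as an immediate consequence of Proposition \ref{prop-good-val-primary-ideal}, the dimension-counting identity $\dim_\k(R/\a) = \#(\S \setminus \I(\a))$ (equivalently the unlabeled corollary $e(\a_\bullet) = e(\I_\bullet)$), and Theorem \ref{th-vol-Gamma-semigroup}. You have assembled exactly these three ingredients in the intended order, correctly tracking the $n!$ normalization and noting that the $\limsup$ in Definition \ref{def-multiplicity} is in fact a limit.
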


The following superadditivity follows from Proposition \ref{prop-additivity-semigroup-ideal}.
Note that $\I(\a_\bullet) + \I(\b_\bullet) \subset \I(\a_\bullet \b_\bullet)$ (cf. Proposition \ref{prop-Newton-poly-additive}). 
\begin{Prop} \label{prop-superadd-ideal}
Let $\a_\bullet$, $\b_\bullet$ be two $\m$-primary graded sequences of subspaces in $R$. We have:
$$\Gamma(\a_\bullet) + \Gamma(\b_\bullet) \subset \Gamma(\a_\bullet \b_\bullet).$$ 
\end{Prop}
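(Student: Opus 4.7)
The plan is to reduce this superadditivity statement to the corresponding equality at the semigroup level, which is already established in Proposition \ref{prop-additivity-semigroup-ideal}. The hint is given right before the statement of the proposition: we have $\I(\a_\bullet) + \I(\b_\bullet) \subset \I(\a_\bullet \b_\bullet)$, and the $\Gamma$ construction is monotone.

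First, I would verify the pointwise inclusion $\I(\a_k) + \I(\b_k) \subset \I(\a_k \b_k)$ for each $k$. Given $\alpha \in \I(\a_k)$ and $\beta \in \I(\b_k)$, pick $f \in \a_k \setminus \{0\}$ and $g \in \b_k \setminus \{0\}$ with $v(f) = \alpha$ and $v(g) = \beta$. Because $R$ is a domain, $fg \neq 0$, and by construction $fg \in \a_k \b_k = (\a_\bullet \b_\bullet)_k$. The multiplicativity of the valuation, $v(fg) = v(f) + v(g) = \alpha + \beta$, puts $\alpha+\beta$ in $\I((\a_\bullet \b_\bullet)_k)$, as required. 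Note that in general this is only a containment, since $\I(\a_k \b_k)$ may include valuations of sums $\sum f_i g_i$ whose leading terms cancel.

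Second, I would check that $\I(\a_\bullet) + \I(\b_\bullet)$ is itself a primary graded sequence in $\S$, so that Proposition \ref{prop-additivity-semigroup-ideal} applies. The grading property $(\I(\a_k) + \I(\b_k)) + (\I(\a_m) + \I(\b_m)) \subset \I(\a_{k+m}) + \I(\b_{k+m})$ follows from the same multiplicativity argument applied separately to $\a_\bullet$ and $\b_\bullet$. The primary condition then follows from Proposition \ref{prop-good-val-primary-ideal} applied to $\a_\bullet$: with the $t_0$ provided there for $\I(\a_\bullet)$, we have $\S \cap \ell_{\geq kt_0} = \I(\a_k) \cap \ell_{\geq kt_0} \subset (\I(\a_k) + \I(\b_k)) \cap \ell_{\geq kt_0}$, so the same $t_0$ works for the sum sequence.

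Third, I would assemble the pieces. By Proposition \ref{prop-additivity-semigroup-ideal},
$$\Gamma(\a_\bullet) + \Gamma(\b_\bullet) = \Gamma(\I(\a_\bullet)) + \Gamma(\I(\b_\bullet)) = \Gamma(\I(\a_\bullet) + \I(\b_\bullet)).$$
The construction $\I_\bullet \mapsto \Gamma(\I_\bullet)$ in Definition \ref{def-Gamma-I-semigroup} is manifestly monotone: a larger family of points $x/k$ produces a larger closure of convex hull. Hence the inclusion $\I(\a_\bullet) + \I(\b_\bullet) \subset \I(\a_\bullet \b_\bullet)$ yields
$$\Gamma(\I(\a_\bullet) + \I(\b_\bullet)) \subset \Gamma(\I(\a_\bullet \b_\bullet)) = \Gamma(\a_\bullet \b_\bullet),$$
which completes the proof. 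There is no real obstacle here; the only conceptual point worth emphasizing is \emph{why} we get containment rather than equality, and this is exactly the non-multiplicativity phenomenon $v(f+g) \geq \min(v(f), v(g))$ that distinguishes valuations from the monomial-by-monomial setting of Section \ref{sec-monomial-ideal}, where Proposition \ref{prop-Newton-poly-additive} gave equality.
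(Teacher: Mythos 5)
Your overall strategy matches the paper's: reduce to the semigroup-level inclusion $\I(\a_\bullet) + \I(\b_\bullet) \subset \I(\a_\bullet \b_\bullet)$, invoke Proposition \ref{prop-additivity-semigroup-ideal} to rewrite the left side as $\Gamma(\I(\a_\bullet) + \I(\b_\bullet))$, and finish by monotonicity of $\Gamma$. Your first and third steps are correct.

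There is, however, a genuine error in your second step. You claim that the same $t_0$ that makes $\I(\a_\bullet)$ primary also works for $\I(\a_\bullet) + \I(\b_\bullet)$, via the inclusion
$$\I(\a_k) \cap \ell_{\geq k t_0} \subset \bigl(\I(\a_k) + \I(\b_k)\bigr) \cap \ell_{\geq k t_0}.$$
This would require $\I(\a_k) \subset \I(\a_k) + \I(\b_k)$, which holds only if $0 \in \I(\b_k)$, i.e. $\b_k$ contains a unit. That is typically false: take $\a_k = \b_k = \m^k$ in $R = \k[x]_{(x)}$, so $\S = \z_{\geq 0}$ and $\I(\a_k) = \{k, k+1, \ldots\}$; then $\I(\a_k) + \I(\b_k) = \{2k, 2k+1, \ldots\}$, and $\{k, k+1, \ldots\} \not\subset \{2k, 2k+1, \ldots\}$. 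Minkowski addition by $\I(\b_k)$ pushes the set to higher $\ell$-levels rather than enlarging it, so the constant $t_0$ must in general grow.

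The cleanest repair is to drop the primality check entirely: the defining formula $\Gamma(\I_\bullet) = \overline{\conv\bigl(\bigcup_{k>0}\{x/k : x \in \I_k\}\bigr)}$ makes sense for any graded sequence of subsets, it is manifestly monotone, and the proof of Proposition \ref{prop-additivity-semigroup-ideal} (comparing $\conv(\I'_k) + \conv(\I''_k)$ with $\conv(\I'_k + \I''_k)$ level by level) uses only that $\I'_\bullet$ and $\I''_\bullet$ are primary, not that their sum is. You then get $\Gamma(\a_\bullet) + \Gamma(\b_\bullet) = \Gamma(\I(\a_\bullet) + \I(\b_\bullet)) \subset \Gamma(\I(\a_\bullet\b_\bullet)) = \Gamma(\a_\bullet\b_\bullet)$ directly. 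If you insist on verifying primality of the sum, you would need a different (larger) constant and a genuine argument that high $\ell$-level elements of $\S$ split as sums from $\I(\a_k)$ and $\I(\b_k)$; this is not as automatic as your write-up suggests.
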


From Theorem \ref{th-multi-ideal-covol}, Proposition \ref{prop-superadd-ideal} and Corollary \ref{cor-Brunn-Mink-covol} we readily obtain:
\begin{Cor}(Brunn-Minkowski for multiplicities) \label{cor-Brunn-Mink-multi}
Let $\a_\bullet$, $\b_\bullet$ be two $\m$-primary graded sequences of subspaces in $R$. Then:
\begin{equation} \label{equ-Brunn-Mink-graded-seq-ideals}
e(\a_\bullet)^{1/n} + e(\b_\bullet)^{1/n} \geq e(\a_\bullet \b_\bullet)^{1/n}.
\end{equation}
\end{Cor}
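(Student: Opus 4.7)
The plan is to chain together the three key results established in the paper: the main theorem computing multiplicity as covolume (Theorem \ref{th-multi-ideal-covol}), the superadditivity of the convex region assignment (Proposition \ref{prop-superadd-ideal}), and the Brunn-Minkowski inequality for covolumes (Corollary \ref{cor-Brunn-Mink-covol}). Since the corollary is stated as a consequence of these three ingredients, the work is purely to assemble them correctly and verify the one auxiliary monotonicity property needed to bridge them.

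First, I would apply Theorem \ref{th-multi-ideal-covol} to each of the three multiplicities appearing in the inequality, rewriting
\[
e(\a_\bullet) = n!\,\covol(\Gamma(\a_\bullet)), \quad e(\b_\bullet) = n!\,\covol(\Gamma(\b_\bullet)), \quad e(\a_\bullet\b_\bullet) = n!\,\covol(\Gamma(\a_\bullet\b_\bullet)).
\]
Thus the desired inequality is equivalent to
\[
\covol(\Gamma(\a_\bullet))^{1/n} + \covol(\Gamma(\b_\bullet))^{1/n} \geq \covol(\Gamma(\a_\bullet\b_\bullet))^{1/n}.
\]

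Second, I would use Proposition \ref{prop-superadd-ideal}, which gives the inclusion $\Gamma(\a_\bullet) + \Gamma(\b_\bullet) \subset \Gamma(\a_\bullet\b_\bullet)$. The next step is the auxiliary monotonicity observation: if $\Gamma' \subset \Gamma''$ are cobounded $\C$-convex regions, then $\C \setminus \Gamma'' \subset \C \setminus \Gamma'$, and hence $\covol(\Gamma'') \leq \covol(\Gamma')$. Applying this to the inclusion from superadditivity yields
\[
\covol(\Gamma(\a_\bullet\b_\bullet)) \leq \covol(\Gamma(\a_\bullet) + \Gamma(\b_\bullet)).
\]

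Finally, I would invoke Corollary \ref{cor-Brunn-Mink-covol} (Brunn-Minkowski for covolumes) applied to the two cobounded $\C$-convex regions $\Gamma(\a_\bullet)$ and $\Gamma(\b_\bullet)$, which gives
\[
\covol(\Gamma(\a_\bullet) + \Gamma(\b_\bullet))^{1/n} \leq \covol(\Gamma(\a_\bullet))^{1/n} + \covol(\Gamma(\b_\bullet))^{1/n}.
\]
Combining this with the previous inequality and multiplying through by $(n!)^{1/n}$ produces the claimed Brunn-Minkowski inequality for multiplicities. There is no genuine obstacle here: all the substantive work was done in Theorem \ref{th-multi-ideal-covol}, Proposition \ref{prop-superadd-ideal}, and Corollary \ref{cor-Brunn-Mink-covol}. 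The only point that deserves explicit mention is the monotonicity of $\covol$ under inclusion, which is immediate from the definition $\covol(\Gamma) = \vol(\C \setminus \Gamma)$ together with the fact that $\Gamma(\a_\bullet\b_\bullet)$ is cobounded (established in the proof of the corresponding proposition in the semigroup setting).
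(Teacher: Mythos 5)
Your proof is correct and is precisely the assembly the paper intends: Theorem \ref{th-multi-ideal-covol}, Proposition \ref{prop-superadd-ideal}, and Corollary \ref{cor-Brunn-Mink-covol}, bridged by the (immediate) monotonicity of covolume under inclusion of cobounded regions. The paper leaves these steps implicit ("we readily obtain"), and you have filled them in accurately.
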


\begin{Rem} \label{rem-BM-non-local-analytic-irreducible}
By Theorem \ref{th-good-val-analytically-irr-sing} and Corollary \ref{cor-Brunn-Mink-multi} we obtain the 
Brunn-Minkowski inequality \eqref{equ-Brunn-Mink-graded-seq-ideals} for   
an analytically irreducible local domain $R$. 
But in fact the assumption that $R$ is analytically irreducible is not necessary: Suppose $R$ is not necessarily
analytically irreducible. 
First by a reduction theorem the statement can be reduced to $\dim R = n = 2$. In dimension $2$, the inequality 
\eqref{equ-Brunn-Mink-graded-seq-ideals} 
implies that the mixed multiplicity of ideals $e(\cdot, \cdot)$, regarded as a bilinear form on the (multiplicative) semigroup of $\m$-primary 
graded sequences of ideals, is positive semidefinite restricted to each local analytic irreducible component. But the sum of positive semidefinite 
forms is again positive semidefinite which implies that Corollary \ref{cor-Brunn-Mink-multi} should hold for $R$ itself.
\end{Rem}

As another corollary of Theorem \ref{th-multi-ideal-covol} we can immediately obtain 
inequalities between the multiplicity of an $\m$-primary ideal, multiplicity of its associated initial ideal and 
its length. Let $R$ be a regular local ring of dimension $n$ with a good valuation (as in Example \ref{ex-good-val-toric-local-ring} and 
Theorem \ref{th-good-val-reg-local-ring}).
\begin{Cor}[Multiplicity of an ideal versus multiplicity of its initial ideal] \label{cor-Lech}
Let $\a$ be an $\m$-primary ideal in $R$ and let $\In(\a)$ denote the initial ideal of $\a$, that is, the monomial ideal in the polynomial 
algebra localized at the origin $\k[x_1, \ldots, x_n]_{(0)}$ corresponding to the semigroup ideal $\I(\a)$. We have:
$$e(\a) \leq e(\In(\a)) \leq n!~ \dim_\k(R / \a).$$
More generally, if $\In(\a^k)$ denote the monomial ideal in $\k[x_1, \ldots, x_n]_{(0)}$ corresponding to the semigroup ideal $\I(\a^k)$ then 
the sequence of numbers $$\frac{e(\In(\a^k))}{k^n}$$ is decreasing and converges to $e(\a)$ as $k \to \infty$. 
\end{Cor}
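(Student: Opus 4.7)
The plan is to translate everything into convex geometry on $\C = \r_{\geq 0}^n$. Since $R$ is regular local, its good valuation takes values in $\S = \z_{\geq 0}^n$ (Example~\ref{ex-good-val-toric-local-ring} and Theorem~\ref{th-good-val-reg-local-ring}). Set $\I_k := \I(\a^k)$ and $\Gamma_k := \conv(\I_k)/k \subset \C$. Because $\In(\a^k)$ is by construction the monomial ideal in $\k[x_1,\dots,x_n]_{(0)}$ with semigroup ideal $\I_k$, its Newton polyhedron is $\conv(\I_k)$; Theorem~\ref{th-covol-monomial} therefore gives $e(\In(\a^k)) = n!\,k^n\,\covol(\Gamma_k)$, while Theorem~\ref{th-multi-ideal-covol} gives $e(\a) = n!\,\covol(\Gamma(\a))$. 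By Definition~\ref{def-Gamma-I} each $\Gamma_k$ is contained in $\Gamma(\a)$, reducing the entire corollary to covolume comparisons among the $\Gamma_k$ and $\Gamma(\a)$.

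For $e(\a) \leq e(\In(\a))$, the inclusion $\Gamma_1 = \conv(\I(\a)) \subset \Gamma(\a)$ immediately gives $\covol(\Gamma(\a)) \leq \covol(\Gamma_1)$. For $e(\In(\a)) \leq n!\,\dim_\k(R/\a)$ I would use a unit-cube argument: $\I := \I(\a)$ is a semigroup ideal in $\S$ (as $\a$ is an ideal), so $\conv(\I)$ absorbs $\C$. For any $x \in \C\setminus\conv(\I)$, the componentwise floor $\lfloor x \rfloor$ cannot lie in $\I$ (otherwise $\lfloor x \rfloor + \C \subset \conv(\I)$ would contain $x$), so $\lfloor x \rfloor \in \S\setminus\I$. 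Thus the unit cubes $\alpha + [0,1]^n$ for $\alpha \in \S\setminus\I$ cover $\C\setminus\conv(\I)$, yielding $\covol(\conv(\I)) \leq \#(\S\setminus\I) = \dim_\k(R/\a)$ (the last equality being the proposition immediately preceding Theorem~\ref{th-multi-ideal-covol}). Applying this bound with $\a^k$ in place of $\a$ produces $e(\In(\a^k))/k^n \leq n!\,H_\a(k)/k^n$.

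For the convergence, the right-hand side $n!\,H_\a(k)/k^n$ tends to $e(\a)$ by Theorem~\ref{th-multi-ideal-covol}; combined with the lower bound $e(\In(\a^k))/k^n = n!\,\covol(\Gamma_k) \geq n!\,\covol(\Gamma(\a)) = e(\a)$, a squeeze gives $\lim_{k\to\infty} e(\In(\a^k))/k^n = e(\a)$, with the approach clearly from above. For the monotone decrease, the inclusion $\I_k + \I_l \subset \I_{k+l}$ (from $\a^k\cdot\a^l\subset\a^{k+l}$) passes through convex hulls to $(k+l)\Gamma_{k+l} \supset k\Gamma_k + l\Gamma_l$, and Corollary~\ref{cor-Brunn-Mink-covol} then yields the subadditivity
$$(k+l)\,\covol(\Gamma_{k+l})^{1/n} \leq k\,\covol(\Gamma_k)^{1/n} + l\,\covol(\Gamma_l)^{1/n}.$$
Hence $a_k := k\,\covol(\Gamma_k)^{1/n}$ is subadditive, and Fekete's lemma forces $a_k/k = \covol(\Gamma_k)^{1/n}$ to converge to $\inf_k \covol(\Gamma_k)^{1/n} = \covol(\Gamma(\a))^{1/n}$.

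The main obstacle is to pin down the precise sense of ``decreasing'': literal $k\mapsto k+1$ monotonicity $\covol(\Gamma_{k+1}) \leq \covol(\Gamma_k)$ does not appear to be an immediate consequence of the convex-geometric setup, and the Fekete-style argument above gives only convergence from above to the infimum. This is however sufficient for the corollary as stated (decrease toward $e(\a)$), and the nestedness $\Gamma_k \subset \Gamma_m$ whenever $k \mid m$ (from $(m/k)\ast\I_k \subset \I_m$) already furnishes term-by-term monotonicity along the divisibility order.
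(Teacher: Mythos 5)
Your argument is essentially the paper's, and in places more careful. For the middle inequality $e(\In(\a)) \leq n!\,\dim_\k(R/\a)$ the paper simply asserts that $\#(\S\setminus\I(\a))$ dominates $\covol(\conv(\I(\a)))$, whereas your unit-cube covering argument (every $x\in\C\setminus\conv(\I)$ has $\lfloor x\rfloor\in\S\setminus\I$ because $\I$ is a semigroup ideal and $\conv(\I)$ is $\C$-closed) actually proves it; that is a welcome addition. The first inequality and the identification of the limit via the squeeze $e(\a)\leq e(\In(\a^k))/k^n\leq n!\,H_\a(k)/k^n$ are correct and match the paper.

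The point you flag about ``decreasing'' is a genuine one, and it is worth saying that the same issue is present in the paper's own proof. The paper writes the chain
$\Gamma(\In(\a)) \subset \tfrac{1}{2}\Gamma(\In(\a^2)) \subset \tfrac{1}{3}\Gamma(\In(\a^3)) \subset \cdots \subset \Gamma(\a)$
and derives term-by-term monotone decrease of $\covol$ from it; but the only inclusions that follow directly from the superadditivity $\I(\a^k)+\I(\a^m)\subset\I(\a^{k+m})$ are $\tfrac1k\conv(\I(\a^k))\subset\tfrac1m\conv(\I(\a^m))$ for $k\mid m$, exactly what you observed. The step from ``$\conv(\I(\a^{k+1}))\supset\conv(\I(\a^k))+\conv(\I(\a))$'' to ``$\conv(\I(\a^{k+1}))\supset\tfrac{k+1}{k}\conv(\I(\a^k))$'' would require $\tfrac1k\conv(\I(\a^k))\subset\conv(\I(\a))$, and the known inclusion goes the other way. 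So neither the paper nor your Fekete/Brunn--Minkowski subadditivity argument establishes literal $k\mapsto k+1$ decrease; what you do prove is that $\covol(\Gamma_k)^{1/n}$ converges from above to its infimum, which equals $\covol(\Gamma(\a))^{1/n}$, and that the sequence decreases along divisibility chains. Your honest disclaimer is therefore apt, and one should not claim the Fekete argument recovers the corollary exactly ``as stated'' — ``decreasing'' in the statement is most naturally read term by term, and that stronger form is left unjustified both in your write-up and in the paper.
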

\begin{proof}
From definition one shows that $\Gamma(\In(\a))$ is the convex hull of $\I(\a)$ (see Theorem \ref{th-covol-monomial}). 
It easily follows that $$\I(\a) \subset \Gamma(\In(\a)) \subset \Gamma(\a).$$
We now notice that $\dim_\k(R / \a)$ is the number of integral points in $\S \setminus \I(\a)$ that in turn 
is bigger than or equal to the volume of $\R \setminus \Gamma(\In(\a))$ and hence the volume of $\R \setminus \Gamma(\a)$.
More generally, from the definition of $\Gamma(\a)$ we have an increasing sequence of convex regions:
$$\Gamma(\In(\a)) \subset (1/2)\Gamma(\In(\a^2)) \subset \cdots \subset \Gamma(\a) = \overline{\bigcup_{k=1}^\infty (1/k)\Gamma(\In(\a^k))}.$$   
Now from Theorem \ref{th-multi-ideal-covol} we have $e(\a) = n!~\covol(\Gamma(\a))$ and for each $k$, 
$e(\In(\a^k)) = n!~\covol(\Gamma(\In(\a^k)))$. This finishes the proof.
\end{proof}
 
The inequality $e(\a) \leq n!~\dim_\k(R / \a)$ is a special case of an inequality of Lech \cite[Theorem 3]{Lech}. See also Lemma 1.3
in \cite{FEM}.




\begin{thebibliography}{99}
\bibitem[Arnold-Varchenko-Guseinzade85]{AVG}
Arnold, V. I.; Gusein-Zade, S. M.; Varchenko, A. N. {\it Singularities of differentiable maps}. 
Modern Birkh\"{a}user Classics. Birkh\"{a}user/Springer, New York, 2012.


\bibitem[Burago-Zalgaller88]{BZ}
Burago, Yu. D.; Zalgaller, V. A.
{\it Geometric inequalities}. Translated from the Russian by A. B. Sosinski\u\i.
Grundlehren der Mathematischen Wissenschaften, 285.
Springer Series in Soviet Mathematics (1988).


\bibitem[Cutkosky-a]{Cutkosky1}
Cutkosky, S. D. {\it Multiplicities Associated to Graded Families of Ideals}. arXiv:1206.4077. To appear in Journal of Algebra and Number Theory.

\bibitem[Cutkosky-b]{Cutkosky2} Cutkosky, S. D. {\it Multiplicities of graded families of linear series and ideals}. arXiv:1301.5613.

\bibitem[Cutkosky-c]{Cutkosky3} Cutkosky, S. D. {\it Asymptotic multiplicities}. arXiv:1311.1432. 

\bibitem[Fernex-Ein-Musta\c{t}\u{a}04]{FEM} de Fernex, T.; Ein, L.; Musta\c{t}\u{a}, M. {\it Multiplicities and log canonical threshold}. 
J. Algebraic Geom. 13 (2004), no. 3, 603Ð615.

\bibitem[Fillastre13]{Fillastre} Fillastre, F.; {\it Fuchsian convex bodies: basics of BrunnÐMinkowski theory}. 
Geom. Funct. Anal. 23 (2013), no. 1, 295--333.

\bibitem[Fulger]{Fulger} Fulger, M. {\it Local volumes on normal algebraic varieties}. arXiv:1105.2981.

\bibitem[Hubl01]{Hubl} H\"ubl, R.
{\it Completions of local morphisms and valuations}. Math. Z. 236 (2001), no. 1, 201--214.




\bibitem[Kaveh-Khovanskii12]{KKh-Annals} 
Kaveh, K.; Khovanskii, A. G.
{\it Newton-Okounkov bodies, semigroups of integral points, graded algebras and intersection theory}.
Annals of Mathematics, 176 (2012), 1--54.

\bibitem[Kaveh-Khovanskii]{KKh-mixed-multi}
Kaveh, K.; Khovanskii, A. G.
{\it On mixed multiplicities of ideals}. arXiv:1310.7979.



\bibitem[Khovanskii88]{Askold-BZ} Khovanskii, A. G.
{\it Algebra and mixed volumes}. Appendix 3 in: Burago, Yu. D.; Zalgaller, V. A.
{\it Geometric inequalities}. Translated from the Russian by A. B. Sosinskii.
Grundlehren der Mathematischen Wissenschaften, 285.
Springer Series in Soviet Mathematics (1988).

\bibitem[Khovanskii92]{Askold-finite-sums}
Khovanskii, A. G. {\it Newton polyhedron, Hilbert polynomial and sums of finite sets}.
(Russian)  Funktsional. Anal. i Prilozhen.  26  (1992),  no. 4, 57--63, 96;
translation in  Funct. Anal. Appl.  26  (1992),  no. 4, 276--281.

\bibitem[Khovanskii-Timorin-a]{Askold-Vladlen} 
Khovanskii, A. G.; Timorin, V. 
{\it Alexandrov-Fenchel inequality for coconvex bodies}. arXiv:1305.4484.

\bibitem[Khovanskii-Timorin-b]{Askold-Vladlen2}
Khovanskii, A. G.; Timorin, V. 
{\it On the theory of coconvex bodies}. arXiv:1308.1781. 

\bibitem[Kushnirenko76]{Kushnirenko} 
Kushnirenko, A. G.
{\it Polyedres de Newton et nombres de Milnor}. (French)
Invent. Math. 32 (1976), no. 1, 1--31.


\bibitem[Lazarsfeld-Musta\c{t}\u{a}09]{LM}
Lazarsfeld, R.; Musta\c{t}\u{a}, M. {\it Convex bodies associated
to linear series}. Ann. Sci. Ec. Norm. Super. (4) 42 (2009), no. 5, 783Ð835.

\bibitem[Lech64]{Lech} Lech, C.
{\it Inequalities related to certain couples of local rings}. 
Acta Math. 112 (1964), 69--89. 

\bibitem[Musta\c{t}\u{a}02]{Mustata} Musta\c{t}\u{a}, M. {\it On multiplicities of graded sequences of ideals}. J. of Algebra 256 (2002), 229--249.

\bibitem[Okounkov96]{Okounkov-Brunn-Mink} Okounkov, A.
{\it Brunn-Minkowski inequality for multiplicities}.
Invent. Math. 125 (1996), no. 3, 405--411.

\bibitem[Okounkov03]{Okounkov-log-concave} Okounkov, A.
{\it Why would multiplicities be log-concave?}
The orbit method in geometry and physics (Marseille, 2000),
329--347, Progr. Math., 213, Birkh\"auser Boston, Boston, MA, 2003.

\bibitem[Rees-Sharp78]{RS} Rees, D.; Sharp, R. Y. {\it On a theorem of B. Teissier on multiplicities of ideals in local rings}. J. London Math. 
Soc. (2) 18 (1978), no. 3, 449Ð463.

\bibitem[Samuel-Zariski60]{SZ} Samuel, P.; Zariski, O.
{\it Commutative algebra}. Vol. II. Reprint of the 1960 edition.
Graduate Texts in Mathematics, Vol. 29.

\bibitem[Teissier77]{Teissier1}
Teissier, B. {\it Sur une inegalite pour les multiplicites}, (Appendix to a paper by D. Eisenbud and H. Levine). Ann. Math. 106 (1977), 38Ð44. 

\bibitem[Teissier78]{Teissier2} Teissier, B. {\it Jacobian Newton Polyhedra and equisingularity}. arXiv:1203.5595.
\end{thebibliography}
\end{document}